\documentclass[reqno,12pt]{amsart}
\usepackage{amsmath,amsfonts,amsthm,amssymb}
\usepackage{tikz}
\usepackage[utf8]{inputenc}
\usepackage[T1]{fontenc}
\usepackage{palatino}
\usetikzlibrary{cd,arrows,decorations.pathmorphing,decorations.markings,backgrounds,positioning,fit,calc,shapes,patterns,external}
\usepackage{setspace}
\usepackage{fullpage}
\usepackage{mathtools}
\usepackage[shortlabels]{enumitem}
\usepackage[subrefformat=parens, labelfont=up]{subcaption}
\usepackage{hyperref}
\usepackage{comment}
\usepackage{datetime2}

\newtheorem{thm}{Theorem}[section]

\newtheorem{innercustomthm}{Theorem}
\newenvironment{mythm}[1]
  {\renewcommand\theinnercustomthm{#1}\innercustomthm}
  {\endinnercustomthm}

\newtheorem{lemma}[thm]{Lemma}

\newtheorem{cor}[thm]{Corollary}
\newtheorem{corollary}[thm]{Corollary}
\newtheorem{prop}[thm]{Proposition}
\theoremstyle{definition}
\newtheorem{defn}[thm]{Definition}

\theoremstyle{remark}

\theoremstyle{remark}

\usepackage{manfnt}
\newenvironment{ex}{\refstepcounter{thm}\begin{proof}[Example \emph{\thethm}]}{\end{proof}}
\newenvironment{rem}{\refstepcounter{thm}\begin{proof}[Remark \emph{\thethm}]}{\end{proof}}

\newenvironment{warn}{\refstepcounter{thm}\begin{proof}[Warning \emph{\thethm}]}{\end{proof}}

\numberwithin{equation}{section}

\newcommand{\seqnum}[1]{\href{https://oeis.org/#1}{\rm \underline{#1}}}

\colorlet{dark purple}{red!35!blue}
\colorlet{dark green}{green!70!black}
\colorlet{dark red}{red!80!black}
\colorlet{dark blue}{blue!80!black!80!cyan}

\tikzstyle{mutable}=[inner sep=0.9mm,circle,draw,minimum size=2mm,fill=white]
\tikzstyle{frozen}=[inner sep=.9mm,rectangle,draw,fill=white]
\tikzstyle{dot} = [fill=black!25,inner sep=0.5mm,circle,draw,minimum size=.5em]
\tikzstyle{marked}=[inner sep=0.5mm,circle,draw,blue!75!black,fill=blue!50]

\tikzset{bpoint/.style = {shape=circle,fill=black,draw,minimum size=.5em, inner sep=0}}
\tikzset{wpoint/.style = {shape=circle,thick,fill=white,draw,minimum size=.5em, inner sep=0}}

\def\Spec{\operatorname{Spec}}

\allowdisplaybreaks[1]

\title{Point Counts of Cluster Varieties of Marked Surfaces over Finite Fields}  

\author{James Beyer}
\address{Instituto de Matem{\'a}ticas, Universidad Nacional Aut{\'o}noma de M{\'e}xico \\ Ciudad Universitaria, CDMX, M{\'e}xico}
\email{jebeyer@im.unam.mx}
\date{2026-08-22}

\keywords{cluster algebra, cluster variety, finite field, point count, deep point}
\subjclass[2020]{
Primary 13F60, 
Secondary 05E14, 
14G15
}

\begin{document}

\begin{abstract}
	We establish formulae for point counts of cluster varieties of cluster algebras of marked surfaces, possibly with punctures. We then establish formulae for the number of non-deep points in these cluster varieties over $\mathbb{F}_2$, which gives us the number of algebraic tori necessary to cover the cluster manifold over $\mathbb{F}_2$. We also show that these formulae satisfy certain recurrence relations. 
\end{abstract}

\maketitle 

\tableofcontents

\section{Introduction}

Point counts of cluster varieties over finite fields were first examined by Chapoton \cite{Chap11, Chap15}. Various aspects of cluster varieties over finite fields have since been discussed by Benito, Muller, Rajchgot, and Smith \cite{BMRS15}, Lam and Speyer \cite{LS22}, Galashin and Lam \cite{GL24skein, GL24catalan}, and P{\'e}rez Melesio and Simental\cite{PMS26}. In this note, we discuss cluster varieties of cluster algebras corresponding to marked surfaces. 

We begin by finding the number of points in a cluster variety associated to a polygon with boundary coefficients. This formula is a natural extension of formulas of Chapoton \cite{Chap11} and Lam and Speyer \cite{LS23}, but can be computed using only the number of vertices and the size of the field.

We then show that for an annulus $\Sigma_{0,2,m}$ with $m$ marked points and at least one marked point on each boundary component, the corresponding cluster variety over $\mathbb{F}_q$ \textemdash the field of $q=P^n$ elements, for some prime $P$ and positive integer $n$ \textemdash is comprised of \[ \#\mathcal{A}(\Sigma_{0,2,m};\mathbb{F}_q) = (q-1)^{m+1} \big(q^{m} + (-1)^{m}\big) \] points. Observe that this formula provides a generalization of the Jacobsthal-Lucas numbers, with the $q=2$ case coinciding with the usual Jacobsthal-Lucas numbers. 

Building upon these results, we derive the following formula for the number of points in the cluster variety of a marked surface over $\mathbb{F}_q$. Note that for any fixed surface, this formula is a polynomial. 
\begin{mythm}{\ref{thm: punctured count full}}
    Let $\Sigma_{g,b,m}^p$ be a connected, triangulable surface of genus $g \geq 0$ with $b>1$ boundary components, $m\geq 2$ marked points, and $p\geq 0$ punctures. 
    The corresponding cluster variety over $\mathbb{F}_q$ contains 
    \[ \#\mathcal{A}(\Sigma_{g,b,m}^p; \mathbb{F}_q) = (q+1)^{2g+b-2} (q-1)^{2g+m+b-2} \big( q^{2g+m+b+p-2} (q^2+q-1)^p + (-1)^m (2q^2-1)^p \big) \]
    points.
\end{mythm}
We also show in Lemma \ref{lemma: recurrence} that, when $q=2$, this formula satisfies the recurrence relation of the Jacobsthal sequence, 
\[ \#\mathcal{A}(\Sigma_{g,b,m}^p; \mathbb{F}_2) = \#\mathcal{A}(\Sigma_{g,b,m-1}^p; \mathbb{F}_2) + 2 \#\mathcal{A}(\Sigma_{g,b,m-2}^p; \mathbb{F}_2). \]

We then compute the number of non-deep points in $\mathcal{A}(\Sigma_{g,b,m}^p ; \mathbb{F}_2)$. Since an algebraic torus over $\mathbb{F}_2$ is set-theoretically a point, the number of non-deep points is the number of algebraic tori necessary to cover the cluster manifold over $\mathbb{F}_2$.
Finally, we show that these numbers satisfy the following recurrence relation. 
\begin{mythm}{\ref{thm: nondeep recurrence}}
	Let $\Sigma_{g,b,m}^p$ be a connected, triangulable surface of genus $g \geq 0$ with $b>1$ boundary components, $m\geq 2$ marked points, and $p\geq 0$ punctures. 
	Then the number of non-deep points of $\mathcal{A}(\Sigma_{g,b,m}^p; \mathbb{F}_2)$ satisfies the recurrence relation 
	\[ L(\Sigma_{g,b,m}^p) = L(\Sigma_{g,b,m-1}^p) + 2 L(\Sigma_{g,b,m-2}^p) + 2^{2g+b+2p-1}. \]
\end{mythm}

\section{Cluster algebras of marked surfaces}

Cluster algebras were first introduced by Fomin and Zelevinsky \cite{FZ02}. For a gentle introduction to the theory of cluster algebras, we recommend the reader begin with Fomin, Williams, and Zelevinsky's book \cite{FWZChapters123, FWZChapters45}. 
Cluster algebras of surfaces were introduced by Gekhtman, Shapiro, and Vainshtein \cite{GSV05}, and were further developed by Fomin, Shapiro, and Thurston \cite{FST08, FT18}. They have since been studied by many authors, including Musiker, Schiffler, and Williams \cite{MSW11,MW13,MSW13}, Thurston \cite{Thu14}, Muller \cite{Mul16}, Mandel and Qin \cite{MQarxiv}, and Gei\ss, Labardini-Fragoso, and Wilson \cite{GLFWarxiv}, among others. 

Let $\Sigma_{g,b,m}^p$ denote a smooth, oriented, compact-with-boundary surface of genus $g\geq 0$ with $b\geq 1$ boundary components and $m\geq 2$ marked points on the boundary, with at least one marked point on each boundary component, and $p\geq 0$ marked points in the interior which we will refer to as \emph{punctures}. We may omit the superscript and simply write $\Sigma_{g,b,m}$ when $p=0$. We will refer to marked disks as \emph{polygons} and denote them as $\Delta_m \coloneqq \Sigma_{0,1,m}$.

A \emph{marked arc} in $\Sigma_{g,b,m}^p$ is an immersion of an interval such that the endpoints are either at marked points or punctures, while the interior of the interval is mapped into the interior of the surface; a marked arc is \emph{simple} if it has no crossings and is not contractible. A collection of marked arcs is called \emph{compatible} if the arcs are simple and do not cross in the interior of $\Sigma_{g,b,m}^p$. 
A \emph{triangulation} is a collection of compatible marked arcs which, along with the boundary arcs, divides the surface into a union of topological triangles; we say that a surface $\Sigma_{g,b,m}^p$ is \emph{triangulable} if it admits a triangulation. A triangulation consists of \[ n(\Sigma_{g,b,m}^p) = 6g + 3b + 3p + m - 6 \] non-boundary arcs \cite[Proposition 2.10]{FST08}, so a surface is triangulable if and only if $n(\Sigma_{g,b,m}^p)$ is non-negative. If $p\geq 1$, we use tagged triangulations as defined by Fomin, Shapiro, and Thurston \cite{FST08}. 

We assign a variable to each homotopy class of marked arcs of $\Sigma_{g,b,m}^p$, which we refer to as a \emph{cluster variable}. The set of cluster variables associated to a triangulation is called a \emph{cluster}; together, the triangulation and its associated cluster are called a \emph{seed}. We assume that boundary arcs correspond to \emph{frozen} variables, which are contained in every cluster. All other cluster variables will correspond to \emph{mutable} variables. Any two triangulations are related by a sequence of flips; likewise, any two clusters are related by a sequence of \emph{mutations} which replace a cluster variable with the cluster variable corresponding to the flipped arc. Relations among cluster variables are generated by skein relations among the marked arcs. 

Let $\mathcal{X}$ be the set of all cluster variables associated to marked arcs in $\Sigma_{g,b,m}^p$. Fix a seed with cluster $ \mathbf{x} = \{x_1, \dotsc, x_n\}$, and let $\mathcal{F} = \mathbb{Q}(x_1, \dotsc, x_n)$ be the field freely generated by that cluster. The \emph{cluster algebra} $A(\Sigma_{g,b,m}^p)$ is the $\mathbb{Z}$-subalgebra of $\mathcal{F}$ generated by $\mathcal{X}$ and the inverses of all frozen variables. An important property of cluster algebras is the Laurent phenomenon \textemdash given any cluster $ \mathbf{x} = \{x_1, \dotsc, x_n\}$, all cluster variables in $\mathcal{X}$ can be written as Laurent polynomials in the variables of that cluster, so we have \[ A(\Sigma_{g,b,m}^p) \subseteq \mathbb{Z}[x_1^{\pm 1}, \dotsc, x_n^{\pm 1}]. \] 

Given a field $\Bbbk$, a cluster algebra has an associated \emph{cluster variety} over $\Bbbk$, in general defined as  $\mathcal{A}(\Sigma_{g,b,m}^p ; \Bbbk) \coloneqq \Spec_\Bbbk A(\Sigma_{g,b,m}^p)$. 
We can identify the $\Bbbk$-points of $\mathcal{A}(\Sigma_{g,b,m}^p ; \Bbbk)$ with ring homomorphisms $ A(\Sigma_{g,b,m}^p) \to \Bbbk$. These homomorphisms are distinguished by the values assigned to the cluster variables, or in our setting, to the homotopy classes of marked arcs. We will discuss the structure of the cluster variety further in Section \ref{section: tori}.

\begin{figure}[ht]
	\centering
    \begin{tikzpicture}[baseline={(0,0)}]
		\path[fill=black!5] (-2,2) to (-1,2) to (0,0) to (-1,-2) to (-2,-2) to (-4,-2) to (-5,0) to (-4,2) to (-2,2);
		\draw[thick] (-2,2) to (-1,2) (-1,2) to (0,0) (0,0) to (-1,-2) (-1,-2) to (-2,-2) (-3,-2) to (-4,-2) (-4,-2) to (-5,0) (-5,0) to (-4,2) (-4,2) to (-3,2);
		\draw[thick] (-2,-2) to (-3,-2) (-2,2) to (-3,2);
		
		\node[dot] (1) at (-1,2) {};
		\node[dot] (2) at (0,0) {};
		\node[dot] (3) at (-1,-2) {};
		\node[dot] (4) at (-4,2) {};
		\node[dot] (5) at (-5,0) {};
		\node[dot] (6) at (-4,-2) {};
		\draw[thick] (4) to (6);
		\draw[thick] (4) to (3);
		\draw[thick] (4) to (2);
		\node[frozen] (y1) at (-4.5,1) {$y_1$};
		\node[frozen] (y2) at (-4.5,-1) {$y_2$};
		\node[frozen] (y3) at (-2.5,-2) {$y_3$};
		\node[frozen] (y4) at (-0.5,-1) {$y_4$};
		\node[frozen] (y5) at (-0.5,1) {$y_5$};
		\node[frozen] (y6) at (-2.5,2) {$y_6$};
		\node[mutable] (x1) at (-4,0) {$x_1$};
		\node[mutable] (x2) at (-2.5,0) {$x_2$};
		\node[mutable] (x3) at (-1.6,0.8) {$x_3$};
	\end{tikzpicture}
	\hspace{0.75cm}
	\begin{tikzpicture}[baseline={(0,0)}]
		\node[frozen] (y1) at (-4,1) {$y_1$};
		\node[frozen] (y2) at (-4,-1) {$y_2$};
		\node[frozen] (y3) at (-2,-1) {$y_3$};
		\node[frozen] (y4) at (0,-1) {$y_4$};
		\node[frozen] (y5) at (2,1) {$y_6$};
		\node[frozen] (y6) at (2,-1) {$y_5$};
		\node[mutable] (x1) at (-3,0) {$x_1$};
		\node[mutable] (x2) at (-1,0) {$x_2$};
		\node[mutable] (x3) at (1,0) {$x_3$};
		
		\draw[-angle 90] (y1) to (x1);
		\draw[-angle 90] (x1) to (y2);
		\draw[-angle 90] (y3) to (x1);
		\draw[-angle 90] (x2) to (y3);
		\draw[-angle 90] (x1) to (x2);
		\draw[-angle 90] (y4) to (x2);
		\draw[-angle 90] (x2) to (x3);
		\draw[-angle 90] (x3) to (y4);
		\draw[-angle 90] (x3) to (y6);
		\draw[-angle 90] (y5) to (x3);
	\end{tikzpicture}
	\caption{An acyclic triangulation of a hexagon and its corresponding quiver.}
	\label{fig: polygon quiver}
\end{figure} 

\section{Polygons and anticlique stratification}\label{section: anticlique}

We begin our examination of cluster varieties in the setting of polygons. Some of the results we reference are stated in terms of \emph{quivers} so it will be helpful to be able to translate between triangulations and quivers. 
To construct an \emph{ice quiver} $Q_T$ from a triangulation $T$ of a polygon $\Delta_m$, we place a frozen vertex on each of the $m$ boundary arcs and a mutable vertex on each of the $m-3$ non-boundary arcs. Frozen vertices are depicted with a rectangle, while mutable vertices are depicted with a circle, as in Figure \ref{fig: polygon quiver}. Within each triangle, draw arrows between the vertices in a clockwise manner, omitting arrows between frozen vertices. 

\begin{rem}
	The same process can be used to generate a quiver for any triangulation of an unpunctured marked surface, with one additional step. If there is a pair of arrows that connect the same two vertices with opposite orientations, remove that pair; repeat until there are no such pairs. For punctured surfaces, the reader should refer to Fomin, Shapiro, and Thurston \cite{FST08} for the construction of a quiver from a tagged triangulation.
\end{rem}

Cluster algebras of polygons are famously related to the family of cluster algebras of Type $A_n$ (see e.g. Section 5.3 of Fomin, Williams, and Zelevinsky's book \cite{FWZChapters45}), but not every quiver coming from a triangulation of a polygon is acyclic. We will say that a triangulation $T$ is \emph{acyclic} if the mutable part of the corresponding quiver $Q_T^{mut}$ \textemdash the subquiver constructed from all mutable vertices along with the arrows that both start and end at mutable vertices \textemdash is acyclic.

\begin{lemma}
	Let $T$ be a triangulation of a polygon $\Delta_m$. The mutable part of the quiver $Q_T^{mut}$ corresponding to $T$ is acyclic if and only if every triangle of $T$ contains at least one boundary arc. 
\end{lemma}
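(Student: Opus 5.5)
The plan is to classify triangles of $T$ by how many of their sides are boundary arcs, and to read off the arrows of $Q_T^{mut}$ triangle by triangle. In a polygon every arrow of $Q_T$ lies inside a single triangle, and for $m\geq 4$ no two triangles share more than one side. Hence no pair of opposite arrows ever arises, so no cancellation occurs and $Q_T^{mut}$ is exactly the union of the mutable--mutable arrows contributed by the individual triangles. A triangle with two or three mutable sides contributes arrows among them. A triangle with exactly one boundary side contributes a single arrow between its two diagonals. A triangle with three mutable sides (an ``internal'' triangle) contributes an oriented $3$-cycle among its three diagonals.

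For the forward direction (by contrapositive), suppose some triangle of $T$ has no boundary side. Its three sides are diagonals, hence mutable vertices, and the clockwise rule draws arrows $a\to b\to c\to a$. This is an oriented cycle in $Q_T^{mut}$, so $Q_T^{mut}$ is not acyclic.

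For the converse, assume every triangle has at least one boundary side, and argue that the underlying graph of $Q_T^{mut}$ is a forest. Any oriented cycle would give a cycle in the underlying undirected graph, so this suffices. I would show this with the dual graph of $T$: its vertices are the triangles, with an edge for each diagonal joining the two triangles containing it. This dual graph is a tree, because each diagonal cuts the disk into two pieces. Under our hypothesis every triangle has at most two diagonal sides, so the dual tree has maximum degree $2$ and is therefore a path $t_1,\dots,t_{m-2}$. The diagonals are then the edges $d_i=t_i\cap t_{i+1}$ of this path. The only mutable--mutable arrows come from the triangles $t_i$ with two diagonal sides $d_{i-1},d_i$, and each such triangle contributes one arrow between $d_{i-1}$ and $d_i$. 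So the underlying graph of $Q_T^{mut}$ is the path $d_1 - d_2 - \cdots - d_{m-3}$, which is acyclic.

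The main point needing care is the structural claim that the dual graph is a tree and that no arrow cancellation occurs. Both are standard for polygons, and I would cite them briefly, noting that distinct triangles share at most one side. Small cases ($m\le 3$, where $Q_T^{mut}$ is empty or a single vertex) are immediate.
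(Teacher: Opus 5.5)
Your proof is correct. The forward direction matches the paper's: an internal triangle gives an oriented $3$-cycle, and there is no cancellation because two triangles of a polygon share at most one side. Your converse, however, takes a different route.

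The paper argues triangle by triangle. Each triangle carries at most one arrow of $Q_T^{mut}$, and the diagonal of each of the two ears is a source or sink. The two diagonals of every remaining triangle form a covering pair, citing Muller's Lemma~10.1 from the covering-pairs section. So no arrow lies on a bi-infinite path, and hence none lies on an oriented cycle.

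You argue globally instead. With no internal triangles the dual tree has maximum degree $2$, so it is a path, and the underlying graph of $Q_T^{mut}$ is itself the path $d_1 - d_2 - \cdots - d_{m-3}$.

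Your version is elementary and self-contained, and it proves more: $Q_T^{mut}$ is an orientation of the $A_{m-3}$ Dynkin diagram. That is exactly what underlies the paper's later anticlique recursion $I(k,n)=I(k-1,n-2)+I(k,n-1)$, which is the independent-set recursion for a path.

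The paper's version is shorter and ties into the covering-pair machinery used later for cutting, but the citation is slightly loose. Muller's lemma gives a covering pair in \emph{some} seed. In $Q_T$ itself such a pair fails to be covering when the triangles across both arcs are internal, since the arrow then lies on a bi-infinite path. So the paper's step really depends on the no-internal-triangle hypothesis, and your path argument makes that dependence explicit.

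A minor point: your small-case remark is unnecessary, since the dual-tree argument already covers $m=3$ and $m=4$. Also, $Q_T^{mut}$ is a single vertex for $m=4$, not for $m\le 3$.
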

\begin{proof}
	Suppose $T$ contains a triangle $t$ with no boundary arcs. The three sides of $t$ correspond to mutable vertices which form a cycle of length 3 in $Q_T^{mut}$. Because $\Delta_m$ is a polygon, no two triangles share more than one side, so none of the three arrows are canceled by a cycle of length two, and $Q_T^{mut}$ is not acyclic.
	
	Now, suppose that every triangle of $T$ contains at least one boundary arc. Every triangle then contains at most one arrow in $Q_T^{mut}$, so there are no cycles within any triangle. The polygon is divided into $m-2$ triangles, so there are two triangles that contain two boundary arcs and one non-boundary arc; this non-boundary arc is then either a sink or source in $Q_T^{mut}$. The remaining triangles each contain two non-boundary arcs, 
    and those two arcs form a covering pair (as in Section \ref{section: locally acyclic}), so they cannot be part of a cycle. 
    Thus, $Q_T^{mut}$ is acyclic.
\end{proof}

An \emph{anticlique} or \emph{independent set} is a subset of vertices of a graph such that such that the induced subgraph contains no edges. We now adapt several results of Lam and Speyer \cite{LS23} to the setting of polygons. Here, the cluster variables $x_1, \dotsc, x_{m-3}$ correspond to the mutable vertices, and $x_{m-3+1}, \dotsc, x_{2m-3}$ correspond to the frozen vertices.

\begin{lemma}\cite[Lemma 3.1]{LS23}\label{LSlemma31}
	Let $T$ be an acyclic triangulation of a polygon $\Delta_m$. For any point $x = (x_1, \dotsc, x_{2m-3})$ of $\mathcal{A}(\Delta_m; \Bbbk)$, the set of indices $I \subseteq \{1, \dotsc, m-3\}$ for which $x_i =0$ forms an anticlique. 
\end{lemma}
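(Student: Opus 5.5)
The plan is to use the exchange relations of the fixed seed $T$ together with the invertibility of frozen variables. The cluster algebra contains the inverses of the frozen variables, so at any $\Bbbk$-point $x$ every frozen coordinate $x_{m-3+1},\dotsc,x_{2m-3}$ is nonzero. The argument is by contradiction: suppose two indices $i,j\in I$ are adjacent in $Q_T^{mut}$, i.e.\ $x_i=x_j=0$, and produce an exchange relation that forces $0=$ nonzero.

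The key relation is the one obtained by mutating at $i$ in $T$. Write
\[ x_i x_i' = \prod_{k\to i} x_k + \prod_{i\to k} x_k , \]
where the products run over all vertices of $Q_T$, frozen ones included. Here $x_i'$ is again a cluster variable (the flipped arc), hence an element of $A(\Delta_m)$, so this is an identity in the algebra and it holds after applying the homomorphism $A(\Delta_m)\to\Bbbk$. At our point the left side vanishes because $x_i=0$. On the right, $x_j=0$ lies in one of the two monomials; say $j\to i$, so the first monomial vanishes. Then the other monomial $\prod_{i\to k}x_k$ must also vanish.

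The main step is to show that this second monomial cannot contain another vanishing mutable variable, so that it is a product of nonzero values and we get a contradiction. This is where acyclicity enters, through the structure established in the preceding lemma. Every triangle of $T$ contains a boundary arc, so $x_i$, as a diagonal, borders exactly two triangles. Each triangle contributes one incoming and one outgoing neighbour of $i$, one of which may be frozen. Hence $i$ has at most two mutable neighbours, and they can lie in the same monomial only in special configurations.

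Rather than check this case by case, I would use a cleaner induction on $|I|$ or on $m$. Choose $i,j\in I$ adjacent with $i$ chosen extremal: since $Q_T^{mut}$ is an acyclic path-like quiver (each triangle has at most one mutable arrow, and the mutable quiver is a tree, indeed a path, by the lemma's argument), pick the adjacent pair $i,j$ in $I$ closest to an end of the path. If the neighbour $k\ne j$ of $i$ on the side opposite $j$ is not in $I$, mutation at $i$ gives a contradiction directly. Otherwise $k\in I$, and I would replace $(i,j)$ by $(k,i)$ and move toward the end of the path. At an endpoint (a sink or source of $Q_T^{mut}$), the only other neighbours are frozen, so the contradiction is immediate.

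The main obstacle is justifying this descent. I must confirm that the opposite-side neighbour of $i$ lies in the other monomial from $j$, which requires the arrows around $i$ to be oriented so that its two mutable neighbours sit in different monomials. In the covering-pair configuration they may instead sit in the same monomial. In that case I would instead mutate at $j$, or at the endpoint of the maximal zero run, where the remaining monomial consists only of frozen variables and nonzero mutable ones. That settles it, since the run's outer neighbours are not in $I$ by maximality.
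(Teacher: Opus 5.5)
Your argument is correct, though it takes a longer path than it needs to.

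\textbf{Relation to the paper's argument.} Your mechanism is the same as in the argument behind \cite[Lemma 3.1]{LS23}, which the paper cites without reproducing. One finds $i\in I$ that has a neighbour in $I$ and whose neighbours in $I$ all lie in a single monomial of the exchange relation at $i$. Then $0=x_ix_i'$ equals a product of nonzero frozen and mutable values, which is impossible in a field.

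\textbf{Where the routes differ.} The difference is how $i$ is chosen.
Lam and Speyer use only directed acyclicity. Start from an arrow inside $I$ and follow arrows within $I$. This ends at a vertex $i$ with an incoming arrow from $I$ and no outgoing arrow into $I$, so $\prod_{i\to k}x_k\neq 0$.
You use the undirected shape of the quiver instead. Since every triangle has a boundary side, $Q_T^{mut}$ is a path. An endpoint $i$ of a maximal run of zeros of length at least two has exactly one neighbour $j$ in $I$. So the monomial not containing $x_j$ is nonzero whatever the orientation.
Their choice works for every acyclic quiver, with no degree bound. Yours is specific to polygons but never needs to look at orientations.

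\textbf{Two small points about your write-up.}
First, calling $Q_T^{mut}$ a path requires that its underlying graph have no undirected cycle. This does not follow from directed acyclicity together with degree at most two. Justify it by noting that the dual graph of a polygon triangulation is a tree. Here that tree has maximal degree two, so it is a path, and the underlying graph of $Q_T^{mut}$ is its line graph.
Second, your ``main obstacle'' is not actually an obstacle. Suppose both mutable neighbours of $i$ sit in the same monomial as $x_j$. Then the other monomial is purely frozen, and the contradiction is immediate. Moreover, if you choose $i$ as the end of a maximal run from the start, the descent step is never needed.
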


Let $\mathcal{I}_k$ denote the set of anticliques $I$ (as in Lemma \ref{LSlemma31}) of size $k$, and let $\mathcal{I}$ denote the set of all anticliques. For $I \in \mathcal{I}$, define $\mathcal{O}_I \subset \mathcal{A}(\Delta_m; \Bbbk)$ to be the relatively open set where $x_i =0$ for $i \in I$ and  $x_i \neq 0$ for $i \notin I$.
\begin{corollary}\cite[Corollary 3.2]{LS23}
	For an acyclic seed of a polygon $\Delta_m$, we have $\mathcal{A} = \bigsqcup_{I\in \mathcal{I}} \mathcal{O}_I$.
\end{corollary}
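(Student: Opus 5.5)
The decomposition follows formally from Lemma \ref{LSlemma31}, once we check that the strata are well defined and pairwise disjoint. Fix an acyclic triangulation $T$ of $\Delta_m$ with cluster $x_1,\dotsc,x_{m-3}$ and frozen variables $x_{m-2},\dotsc,x_{2m-3}$. A $\Bbbk$-point of $\mathcal{A}(\Delta_m;\Bbbk)$ is a ring homomorphism $\varphi\colon A(\Delta_m)\to\Bbbk$. Since $x_1,\dotsc,x_{m-3}$ are elements of $A(\Delta_m)$, each value $\varphi(x_i)$ is a well-defined element of $\Bbbk$. For each point, set
\[ Z(\varphi)=\{\, i\in\{1,\dotsc,m-3\} : \varphi(x_i)=0 \,\}. \]
The frozen variables are invertible in $A(\Delta_m)$, so they never vanish and only the mutable indices matter.

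The proof then has two steps.

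First, for every point $\varphi$, the set $Z(\varphi)$ lies in $\mathcal{I}$. This is exactly the content of Lemma \ref{LSlemma31}. It rests on the exchange relations $x_ix_i' = M_1+M_2$ of the acyclic seed. If two adjacent mutable indices $i,j$ both vanished, then, following the Lam--Speyer argument, one chases through the exchange relations along the acyclic quiver (using a source or sink) and forces a unit to vanish. We simply cite the lemma.

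Second, by the definition of $\mathcal{O}_I$, a point $\varphi$ lies in $\mathcal{O}_I$ if and only if $Z(\varphi)=I$. Each $\varphi$ has exactly one zero set, so the strata $\mathcal{O}_I$ are pairwise disjoint. The first step shows that the zero set of every point is an anticlique, so every point lies in some $\mathcal{O}_I$ with $I\in\mathcal{I}$, and this $I$ is unique. This gives $\mathcal{A}=\bigsqcup_{I\in\mathcal{I}}\mathcal{O}_I$ set-theoretically, which is all the later point counts require.

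If one also wants each $\mathcal{O}_I$ to be a locally closed subvariety, that follows because each $\mathcal{O}_I$ is cut out by the vanishing of the $x_i$ with $i\in I$ together with the nonvanishing of the $x_i$ with $i\notin I$.

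There is essentially no obstacle beyond Lemma \ref{LSlemma31}. The only point needing care is that the anticlique condition refers to the graph $Q_T^{mut}$, and in the acyclic polygon case this graph has no parallel or cancelled arrows. This is guaranteed by the preceding lemma, since each triangle contributes at most one mutable arrow.
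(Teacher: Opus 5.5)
Your proposal is correct, and it is the same essentially formal argument by which the cited Lam--Speyer corollary follows from Lemma~\ref{LSlemma31}; the paper itself gives no proof and only quotes the result. Each point has a unique zero set among the mutable indices, that zero set is an anticlique, so the point lies in exactly one $\mathcal{O}_I$ with $I\in\mathcal{I}$. Your closing remark is not needed, since anticliques depend only on adjacency, and its justification is slightly off: there are no parallel or cancelling arrows because no two triangles of a polygon triangulation share more than one side, not because each triangle contributes at most one mutable arrow.
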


We can also describe these anticliques topologically.
\begin{lemma}
	Let $T$ be an acyclic triangulation of a polygon $\Delta_m$. By identifying the non-boundary arcs of $T$ with their indices in the corresponding quiver, \[ \mathcal{I} = \{\text{collections containing at most one non-boundary arc from each triangle}\}.\]
\end{lemma}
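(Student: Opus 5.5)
The plan is to unwind the definition of anticlique in $Q_T^{mut}$ and show that two mutable vertices are adjacent in $Q_T^{mut}$ exactly when the corresponding arcs are two sides of a common triangle of $T$. Once this is established, a set $I$ of non-boundary arcs is an anticlique precisely when no triangle contains two arcs of $I$. That is the same as saying $I$ contains at most one non-boundary arc from each triangle, which is the claimed equality.

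\textbf{Arrows come from shared triangles.} By construction of $Q_T$, arrows are drawn only between vertices whose arcs are sides of a common triangle. Every triangle with two non-boundary sides contributes one arrow between them. So if two mutable vertices are joined by an arrow in $Q_T^{mut}$, their arcs lie in a common triangle.

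\textbf{Shared triangles give arrows.} Conversely, suppose two non-boundary arcs $\gamma,\delta$ are sides of a common triangle $t$. Since $T$ is acyclic, the previous lemma says $t$ has at least one boundary side. So $\gamma,\delta$ are the only two non-boundary sides of $t$, and $t$ contributes exactly one arrow between them in $Q_T^{mut}$.

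\textbf{No cancellation.} The only step needing care is that this arrow is not cancelled by an opposite arrow coming from another triangle. As in the proof of the previous lemma, in a polygon two distinct triangles share at most one side. Hence $\gamma$ and $\delta$ cannot both be sides of a second triangle $t'$, and no cancelling arrow exists. (Cancellation is in any case only mentioned for surfaces other than polygons.) Thus $\gamma$ and $\delta$ are adjacent in the underlying graph of $Q_T^{mut}$.

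This no-cancellation check is the only potential obstacle, and the polygon fact above disposes of it. The rest is unwinding definitions, and the equality of the two sets follows.
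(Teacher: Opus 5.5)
Your proof is correct and follows the same route as the paper, which simply says the lemma follows directly from the construction of $Q_T$. You spell out the two facts that construction gives: two mutable vertices are adjacent exactly when their arcs are sides of a common triangle, and no arrow is cancelled because two triangles of a polygon triangulation share at most one side. (Your appeal to acyclicity in the second step is harmless but unnecessary, since the construction draws an arrow between every pair of non-boundary sides of any triangle.)
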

\begin{proof}
	This follows directly from the construction of a quiver from a triangulation.
\end{proof}

Note that the cluster algebra of a polygon $\Delta_{m}$ is of Dynkin Type $A_{m-3}$ with $m$ frozen variables, so it is really full rank \cite[Section 5.1]{CGSS26}. Thus, we have the following proposition, adapted to the setting of polygons.
\begin{prop}\cite[Proposition 3.9]{LS23}
	Let $\mathcal{A}(\Delta_m; \Bbbk)$ be an acyclic cluster variety corresponding to a polygon $\Delta_m$. Then \[ \#\mathcal{A}(\Delta_m ;\mathbb{F}_q) = \sum_{k} ~\lvert \mathcal{I}_k \rvert q^k (q-1)^{2m-3-2k}.\]
\end{prop}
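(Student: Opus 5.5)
Fix an acyclic triangulation $T$ of $\Delta_m$ with cluster variables $x_1,\dots,x_{m-3}$ (mutable) and $x_{m-2},\dots,x_{2m-3}$ (frozen), and write $Q=Q_T$. By the Corollary above, $\mathcal{A}(\Delta_m;\mathbb{F}_q)=\bigsqcup_{I\in\mathcal{I}}\mathcal{O}_I$. So it is enough to show that for every anticlique $I$ of size $k$,
\[ \#\mathcal{O}_I(\mathbb{F}_q)=q^k(q-1)^{2m-3-2k}, \]
and then sum over $k$, grouping by $\lvert\mathcal{I}_k\rvert$.

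\textbf{Step 1 (coordinates).} Since $Q^{mut}$ is acyclic, the cluster algebra is generated by $x_1,\dots,x_{m-3}$, their once-mutated neighbours $x_1',\dots,x_{m-3}'$, and the frozen variables with inverses. The defining relations are
\[ x_ix_i'=P_i^+ + P_i^-, \]
where $P_i^\pm$ are the monomials over in- and out-arrows at $i$ (Berenstein--Fomin--Zelevinsky). This is where the really full rank input from \cite{CGSS26} is used: it guarantees that this presentation is the whole algebra and that the lower bound algebra equals $A$, even over $\mathbb{Z}$. A point is therefore a tuple $(x,x')$ with all frozen coordinates in $\mathbb{F}_q^\times$ satisfying these relations.

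\textbf{Step 2 (count on a fixed stratum).} For $i\notin I$ we have $x_i\ne0$, so $x_i'$ is determined by the other coordinates. For $i\in I$ we have $x_i=0$, so the point must satisfy $P_i^++P_i^-=0$, and $x_i'$ is free, contributing $q$. No neighbour of $i$ lies in $I$, because $I$ is an anticlique. Hence $P_i^\pm$ are monomials in variables that are all nonzero on $\mathcal{O}_I$. Their ratio $P_i^+/P_i^-=-1$ is then one multiplicative condition in the torus $(\mathbb{F}_q^\times)^{2m-3-k}$ of the nonzero coordinates.

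\textbf{Step 3 (independence, the main obstacle).} I must show that the $k$ conditions $\prod_j x_j^{b_{ji}}=-1$, one for each $i\in I$, cut out exactly $(q-1)^{2m-3-k-k}$ points. It suffices that the exponent vectors (columns $i\in I$ of the extended exchange matrix $\tilde B$, restricted to rows outside $I$) extend to a basis of $\mathbb{Z}^{2m-3-k}$, i.e.\ they form a unimodular system. In that case a monomial change of coordinates makes each condition fix one coordinate, and the count $(q-1)^{2m-3-2k}$ follows. This is exactly what really full rank provides, namely that $\tilde B$ is surjective onto $\mathbb{Z}^{n}$ over every prime.

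Concretely for polygons, each mutable $i$ lies in two triangles, and each triangle contributes a boundary arc or another arc. I would try to choose, for each $i\in I$, a frozen boundary arc $f(i)$ adjacent to $i$ in a triangle, with $f$ injective. The column of $i$ then has entry $\pm1$ in row $f(i)$ and zeros in rows $f(i')$ for $i'\ne i$. A triangle contains at most one element of $I$ and every triangle has a boundary arc, so such an injective choice exists. Solving for $x_{f(i)}$ then makes the count immediate, and summing gives the proposition.
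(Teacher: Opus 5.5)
Your argument is correct, and it is more self-contained than what the paper does. The paper gives no proof of its own here. It quotes \cite[Proposition 3.9]{LS23} and only checks the hypothesis: a polygon gives a type $A_{m-3}$ cluster algebra with $m$ frozen variables, which is really full rank by \cite{CGSS26}.

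Your Steps 1 and 2 reconstruct the Lam--Speyer argument behind that citation. Each stratum $\mathcal{O}_I$ is $\mathbb{A}^k$ times a subvariety of a $(2m-3-k)$-dimensional torus cut out by $k$ monomial equations. In Step 3 you replace the appeal to really full rank with a direct combinatorial check. For each $i\in I$ you choose a boundary arc $f(i)$ in a triangle containing $i$. The minor of $\tilde B$ on rows $f(I)$ and columns $I$ is then diagonal with entries $\pm1$, for two reasons:
\begin{itemize}
\item a boundary arc lies in exactly one triangle, so there is exactly one arrow between $f(i)$ and $i$ and no cancellation;
\item each triangle contains at most one arc of $I$, so column $i$ vanishes in the rows $f(i')$ with $i'\neq i$.
\end{itemize}
Solving for the $x_{f(i)}$ then gives $(q-1)^{2m-3-2k}$ at once.

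What your route buys is an elementary proof that needs unimodularity only for the columns indexed by the anticlique, with no input from \cite{CGSS26}. The paper's citation is shorter and applies verbatim to any acyclic, really full rank seed.

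One small correction. Really full rank is not what supplies the presentation in Step 1. That presentation comes from acyclicity via Berenstein--Fomin--Zelevinsky, and the coprimality they need already follows from ordinary full rank. Once your concrete argument in Step 3 is in place, the sentence invoking really full rank there is no longer needed.
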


Our goal is to find a specialized formula for the number of points in the cluster variety of a marked surface, so we would like to simplify this expression as much as possible. Let us first examine the sizes of the sets of anticliques. Denote 
\[ I(k,n) \coloneqq \lvert \mathcal{I}_k \rvert \text{ on the quiver of an acyclic triangulation of $\Delta_{n+3}$.} \]

	$I(k,n)$ can then be defined recursively as:
	\begin{align*}
		I(0,n) &= 1, \\
		I(1,n) &= n, \\
		I(k,n) &= I(k-1, n-2) + I(k,n-1).
	\end{align*}

We can now rewrite Lam and Speyer's formula using this recursion.
\begin{lemma}\label{lem: LS recurs}
	Suppose $m\geq 6$. Then 
	\[ \#\mathcal{A}(\Delta_m ; \mathbb{F}_q) = (q-1)^2 \#\mathcal{A}(\Delta_{m-1} ; \mathbb{F}_q) + q(q-1)^2 \#\mathcal{A}(\Delta_{m-2} ; \mathbb{F}_q)\].
\end{lemma}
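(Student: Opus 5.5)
Set $n=m-3$, so the acyclic triangulation of $\Delta_m$ has $n$ mutable arcs, and write the count from Proposition 3.9 of \cite{LS23} as
\[ \#\mathcal{A}(\Delta_m;\mathbb{F}_q) = (q-1)^{n+3}\, P_n, \qquad P_n \coloneqq \sum_{k} I(k,n)\, q^k (q-1)^{n-2k}. \]
This works because $2m-3-2k = (n+3) + (n-2k)$. The claimed identity divides by $(q-1)^{n+3}$, and since $(q-1)^2\,(q-1)^{n+2} = (q-1)^{n+3}\cdot(q-1)$ and $q(q-1)^2\,(q-1)^{n+1} = (q-1)^{n+3}\cdot q$, it is equivalent to
\[ P_n = (q-1)P_{n-1} + q P_{n-2}, \]
valid for $n\geq 3$, i.e. $m\geq 6$. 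So the plan is to reduce the lemma to this three-term recurrence for the normalized polynomials $P_n$ and then prove it from the recursion for $I(k,n)$.

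Substitute $I(k,n) = I(k,n-1) + I(k-1,n-2)$ into $P_n$ and split the sum. The first piece is
\[ \sum_k I(k,n-1)\, q^k (q-1)^{n-2k} = (q-1)\sum_k I(k,n-1)\, q^k (q-1)^{(n-1)-2k} = (q-1)P_{n-1}. \]
For the second piece, shift $k\mapsto k+1$:
\[ \sum_k I(k-1,n-2)\, q^k (q-1)^{n-2k} = q\sum_j I(j,n-2)\, q^j (q-1)^{(n-2)-2j} = qP_{n-2}. \]
Adding the two pieces gives the recurrence.

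The one point that needs care is the range of validity of the recursion for $I(k,n)$. It was stated alongside the base cases $I(0,n)=1$ and $I(1,n)=n$. Substituting $k=1$ into the recursion gives $I(1,n) = I(0,n-2) + I(1,n-1) = 1 + (n-1)$, which is consistent, and for $k=0$ we adopt the convention $I(-1,\cdot)=0$. I would also check that the terms with $I(k,n)=0$ cause no trouble: these are the $k > \lceil n/2\rceil$, and there $(q-1)^{n-2k}$ could have a negative exponent. Since such terms vanish, they are harmless.

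I also need to confirm that the recursion is genuinely valid for $n-2\geq 1$, i.e. that the $I(k,n)$ really count anticliques in a path. The acyclic triangulation quiver's mutable part is a path on $n$ vertices by the earlier lemma (covering pairs), so anticliques of size $k$ in a path satisfy this standard recursion by conditioning on the last vertex. This explains the hypothesis $m\geq 6$ ($n\geq 3$), which makes $P_{n-2}$ with $n-2\geq 1$ meaningful. I expect this bookkeeping of index ranges to be the only real obstacle; the algebra itself is routine.
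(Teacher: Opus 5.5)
Your proposal is correct and is essentially the paper's argument: substitute $I(k,n)=I(k,n-1)+I(k-1,n-2)$ into the Lam--Speyer sum, split it into two sums, and reindex the second by $k\mapsto k+1$. Normalizing by $(q-1)^{n+3}$ and adopting the convention $I(-1,\cdot)=0$ only streamline the paper's bookkeeping, where the $k=0,1$ terms are peeled off by hand. One small correction to your index remark: for odd $n$, the term $k=(n+1)/2$ has $I(k,n)=1$ and exponent $n-2k=-1$, so $P_n$ genuinely contains a $(q-1)^{-1}$ term. This is harmless since $q\neq 1$, and you can avoid it entirely by never dividing out $(q-1)^{n+3}$.
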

\begin{proof}
	\begin{align*}
		\#\mathcal{A}(\Delta_m ;\mathbb{F}_q) &= \sum_{k=0}^{m-3} q^k (q-1)^{2m-3-2k} I(k,m-3) \\
		&= (q-1)^{2m-3} + (m-3)q(q-1)^{2m-5} + \sum_{k=2}^{m-3} q^k (q-1)^{2m-3-2k} I(k,m-3) \\
		&= (q-1)^{2m-3} + (m-3)q(q-1)^{2m-5} \\ &\quad\quad+ \sum_{k=2}^{m-3} q^k (q-1)^{2m-3-2k} \big(I(k-1,m-5) + I(k,m-4) \big) \\
		&= (q-1)^{2m-3} + (m-3)q(q-1)^{2m-5} \\ &\quad\quad+ \sum_{k=2}^{m-3} q^k (q-1)^{2m-3-2k} I(k-1,m-5) \\ &\quad\quad+ \sum_{k=2}^{m-3} q^k (q-1)^{2m-3-2k} I(k,m-4) \\
		&= (q-1)^{2m-3} + q(q-1)^{2m-5} + (m-4)q(q-1)^{2m-5} \\ &\quad\quad+ q(q-1)^2 \sum_{k=2}^{m-3} q^{k-1} (q-1)^{2m-7-2(k-1)} I(k-1,m-5) \\ &\quad\quad+ (q-1)^2 \sum_{k=2}^{m-3} q^k (q-1)^{2m-5-2k} I(k,m-4) \\
		&= q(q-1)^2 \bigg((q-1)^{2m-7} + \sum_{k=1}^{m-3} q^{k} (q-1)^{2m-7-2k} I(k,m-5) \bigg) \\ &\quad\quad+ (q-1)^2 \bigg( (q-1)^{2m-5} + (m-4)q(q-1)^{2m-7} \\ &\quad\quad + \sum_{k=2}^{m-3} q^k (q-1)^{2m-5-2k} I(k,m-4) \bigg) \\
		&= q(q-1)^2 \#\mathcal{A}(\Delta_{m-2} ;\mathbb{F}_q) + (q-1)^2 \#\mathcal{A}(\Delta_{m-1} ;\mathbb{F}_q).
	\end{align*}
\end{proof}

\subsection{Counting points}

Computing the values of $\#\mathcal{A}(\Delta_4 ; \mathbb{F}_q)$ and $\#\mathcal{A}(\Delta_5 ; \mathbb{F}_q) $ with Lemma \ref{lem: LS recurs} gives us the base cases for computing a general formula using induction.

\begin{itemize}
\item Case $m=4$: The quadrilateral has one mutable arc, so the only nonzero values of $I(k,m-3)$ are \[ I(0,1) = 1 \quad \text{ and } \quad I(1,1) = 1. \] The number of points over $\mathbb{F}_q$ is then 
\begin{align*}
	\#\mathcal{A}(\Delta_4 ;\mathbb{F}_q) &= \sum_{k=0}^{m-3} q^k (q-1)^{2m-3-2k} I(k,m-3) \\
	&= (q-1)^5 + q(q-1)^3 \\
	&= (q-1)^3 (q^2 -2q +1 + q) \\
	&= (q-1)^3 (q^2 -q +1) \\
	&= (q-1)^{m-1} \Bigg( \frac{q^{m-1} + (-1)^m}{q+1} \Bigg).
\end{align*}

\item Case $m=5$: The pentagon has two mutable arcs, so the only nonzero values of $I(k,m-3)$ are \[ I(0,2) = 1 \quad \text{ and } \quad I(1,2) = 2. \] The number of points over $\mathbb{F}_q$ is then 
\begin{align*}
	\#\mathcal{A}(\Delta_5 ;\mathbb{F}_q) &= \sum_{k=0}^{m-3} q^k (q-1)^{2m-3-2k} I(k,m-3) \\
	&= (q-1)^{7} + 2q(q-1)^5 \\
	&= (q-1)^4 (q^3 -3q^2 +3q -1 + 2q^2 -2q) \\
	&= (q-1)^4 (q^3 -q^2 +q -1) \\
	&= (q-1)^{m-1} \Bigg( \frac{q^{m-1} + (-1)^m}{q+1} \Bigg).
\end{align*}
\end{itemize}

\begin{prop}\label{prop:polygon points}
	Given a polygon $\Delta_m$ with $m\geq 4$ sides, the corresponding cluster variety over $\mathbb{F}_q$ contains \[ \#\mathcal{A}(\Delta_m; \mathbb{F}_q) = (q-1)^{m-1} \Bigg( \frac{q^{m-1} + (-1)^{m}}{q+1} \Bigg) \] 
	points.
\end{prop}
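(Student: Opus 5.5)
We argue by strong induction on $m$, using the recurrence of Lemma \ref{lem: LS recurs} together with the base cases $m=4$ and $m=5$ computed just above. Set
\[ F(m) \coloneqq (q-1)^{m-1}\,\frac{q^{m-1}+(-1)^m}{q+1}. \]
The base cases show that $\#\mathcal{A}(\Delta_4;\mathbb{F}_q)=F(4)$ and $\#\mathcal{A}(\Delta_5;\mathbb{F}_q)=F(5)$. For $m\ge 6$, Lemma \ref{lem: LS recurs} expresses $\#\mathcal{A}(\Delta_m;\mathbb{F}_q)$ in terms of the counts for $\Delta_{m-1}$ and $\Delta_{m-2}$. By the induction hypothesis these counts equal $F(m-1)$ and $F(m-2)$. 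It therefore suffices to prove the purely algebraic identity
\[ F(m) = (q-1)^2 F(m-1) + q(q-1)^2 F(m-2). \]

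To check this identity, we compare powers of $(q-1)$. The left side carries the factor $(q-1)^{m-1}$. The right side carries $(q-1)^{2+m-2}=(q-1)^m$ and $(q-1)^{2+m-3}=(q-1)^{m-1}$, so $(q-1)^{m-1}$ factors out of everything. After multiplying through by $q+1$, the identity reduces to
\[ q^{m-1}+(-1)^m = (q-1)\big(q^{m-2}+(-1)^{m-1}\big) + q\big(q^{m-3}+(-1)^{m-2}\big). \]
We expand the right-hand side:
\[ q^{m-1}-q^{m-2}+(-1)^{m-1}(q-1)+q^{m-2}+(-1)^m q. \]
The terms $\pm q^{m-2}$ cancel, leaving $q^{m-1}$. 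The signed terms combine as
\[ (-1)^{m-1}q+(-1)^{m}+(-1)^m q=(-1)^m, \]
which is exactly the left-hand side.

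One point needs care: the two-step induction must correctly cover all $m\ge 4$. Lemma \ref{lem: LS recurs} is stated only for $m\ge 6$, and this is consistent with our base cases being $m=4,5$. No step of the argument is a serious obstacle.
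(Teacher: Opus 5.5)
Your proposal is correct and matches the paper's proof: strong induction from the base cases $m=4,5$ via the recurrence of Lemma~\ref{lem: LS recurs}, which reduces to the identity $q^{m-1}+(-1)^m=(q-1)\big(q^{m-2}+(-1)^{m-1}\big)+q\big(q^{m-3}+(-1)^{m-2}\big)$. You verify that identity correctly, and your remark that the lemma's hypothesis $m\geq 6$ fits exactly with base cases $m=4,5$ is the right check.
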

\begin{proof}
	The base cases of $m=4$ and $m=5$ are calculated above. Now for our inductive step, assume there is some integer $k$ such that the theorem holds for all $4 \leq m \leq k-1$. 
	
	\begin{align*}
		\#\mathcal{A}(\Delta_k ; \mathbb{F}_q) &= (q-1)^2 \#\mathcal{A}(\Delta_{k-1} ; \mathbb{F}_q) + q(q-1)^2 \#\mathcal{A}(\Delta_{k-2} ; \mathbb{F}_q) \\
		&= (q-1)^{k} \Bigg( \frac{q^{k-2} + (-1)^{k-1}}{q+1} \Bigg) + q(q-1)^{k-1} \Bigg( \frac{q^{k-3} + (-1)^{k-2}}{q+1} \Bigg) \\
		&= \frac{(q-1)^{k-1}}{q+1} \big( (q-1)\big(q^{k-2} + (-1)^{k-1}\big) + q \big(q^{k-3} + (-1)^{k-2}\big)\big) \\
		&= \frac{(q-1)^{k-1}}{q+1} \big( q^{k-1} + q(-1)^{k-1} - q^{k-2} - (-1)^{k-1} + q^{k-2} + q(-1)^{k-2}\big) \\
		&= \frac{(q-1)^{k-1}}{q+1} \big( q^{k-1} + (-1)^{k} \big).
	\end{align*}
	Thus, the formula holds for all integers $k\geq 4$. 
\end{proof}

\begin{rem}
    Note that Proposition \ref{prop:polygon points} agrees with Chapoton's formula \cite[Lemma 3.3]{Chap11}, in the sense that we have scaled that formula by $(q-1)^{m-1}$ to account for all other possible boundary values.
\end{rem}

\begin{rem}
	Although the algebra constructed from the triangle $\Delta_3$ has no mutable cluster variables, the reader may wish to note that Proposition \ref{prop:polygon points} also holds for $m=3$. 
\end{rem}

\section{Covering pairs}\label{section: locally acyclic}

Let $\Sigma_{g,b,m}^p$ be a smooth, oriented, triangulable surface of genus $g\geq 0$ with $b\geq 1$ boundary components, $p\geq 0$ punctures, and $m\geq 2$ marked points on the boundary, with at least one marked point on each boundary component. The corresponding cluster algebra $A(\Sigma_{g,b,m})$ is \emph{locally acyclic} \cite[Theorem 10.6]{Mul13}, ensuring that it has many desirable properties. In particular, $A(\Sigma_{g,b,m}^p)$ is finitely generated, integrally closed, and locally a complete intersection \cite[Theorem 4.2]{Mul13}. A precise definition of local acyclicity is outside the scope of this note, but we will recall some relevant results here.


\begin{defn}\cite[Definition 5.2]{Mul13}
	A \emph{covering pair} $(\alpha,\beta)$ in an ice quiver $Q$ is a pair of mutable vertices $\alpha$, $\beta$ such that there is an arrow from $\alpha$ to $\beta$, but this arrow is not contained in any bi-infinite path in $Q^{mut}$. 
	
	A \emph{covering pair} $(\alpha, \beta)$ in a cluster algebra $A$ is a pair of cluster variables such that there is some seed $(Q, \mathbf{x})$ with $\alpha,\beta \in \mathbf{x}$ and $(\alpha,\beta)$ is a covering pair in $Q$.
\end{defn}

\begin{figure}[htb]
    \centering
    \begin{tikzpicture}
       \path[fill=black!10] (-2,1) to (2,1) to (2,-1) to (-2,-1) to (-2,1);
        \draw[thick] (-2,1) to (2,1) (-2,-1) to (2,-1);
        \draw[dashed] (-2,-1) to (-2,1) (2,-1) to (2,1);
        \draw[-Triangle,thin] (-2,0) to (-2,0.1);
        \draw[-Triangle,thin] (2,0) to (2,0.1);
        
        \node[dot] (1) at (-1,1) {};
        \node[dot] (3) at (1,-1) {};
        \node[dot] (4) at (-1,-1) {};
        \draw[thick] (1) to (3);
        \node at (-1.22,0) {$\alpha$};
        \draw[thick] (1) to (4);
        \node at (0.4,0) {$\beta$};
    \end{tikzpicture}
    \caption{A covering pair in a marked annulus.}
    \label{fig: covering pair}
\end{figure}

\begin{lemma}\cite[Lemma 10.1]{Mul13}\label{lemma: Muller LA 10.1}
	Let $\alpha$ and $\beta$ be tagged arcs in $\Sigma_{g,b,m}^p$ which cut out an unpunctured triangle (figure \ref{fig: covering pair}), with all endpoints in the boundary and the endpoints of $\beta$ distinct. The $(\alpha,\beta)$ are a covering pair in $\mathcal{A}(\Sigma_{g,b,m}^p; \Bbbk)$.
\end{lemma}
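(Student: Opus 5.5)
The plan is to exhibit a seed in which $\alpha$ and $\beta$ both appear, and then check directly that in the corresponding quiver the arrow between them lies on no bi-infinite path in $Q^{mut}$. This is exactly the definition of a covering pair in a cluster algebra.

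\textbf{Step 1: the triangle and the arrow.} By hypothesis, $\alpha$ and $\beta$ cut out an unpunctured triangle $t$. The third side $\gamma$ of $t$ is a boundary segment. Since all endpoints lie on the boundary, neither $\alpha$ nor $\beta$ is a plain/notched pair at a puncture, so ordinary (untagged) arcs suffice near $t$. Extend $\{\alpha,\beta\}$ to a triangulation $T$ containing $t$.

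In $Q_T$ the triangle $t$ contributes one arrow between $\alpha$ and $\beta$, with the orientation fixed by the clockwise convention. Relabel if necessary so this arrow is $\alpha\to\beta$. Could the arrow be cancelled by a $2$-cycle? That would require a second triangle having both $\alpha$ and $\beta$ as sides, oriented the opposite way. I would rule this out, or else choose $T$ so it does not happen, using the hypothesis that the endpoints of $\beta$ are distinct. When $\beta$ is a loop, the configuration in Figure~\ref{fig: covering pair} can wrap around and produce exactly such a cancelling or doubled arrow.

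\textbf{Step 2: no bi-infinite path.} Suppose some bi-infinite path in $Q^{mut}$ uses the arrow $\alpha\to\beta$. Then it continues forward out of $\beta$ along an arrow $\beta\to\delta$. That arrow comes from the triangle $t'$ on the other side of $\beta$. Going backward, the path enters $\alpha$ along an arrow $\epsilon\to\alpha$, which comes from the triangle on the other side of $\alpha$. Because the third side of $t$ is boundary, $t$ itself produces no arrow into $\alpha$ or out of $\beta$ other than $\alpha\to\beta$.

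The approach I would take is to choose $T$ so that one of these continuations is impossible. The natural way is to make $\beta$ a sink or make $\alpha$ a source in $Q_T^{mut}$. For example, flip arcs so that the triangle on the far side of $\beta$ also has a boundary side positioned so that its arrow points into $\beta$. This is possible because $\beta$ has distinct endpoints on the boundary, so a fan of arcs at an endpoint of $\beta$ can be arranged suitably. In the annulus picture, both endpoints of $\beta$ sit on boundary components, and the neighboring triangle can be taken to contain a boundary arc adjacent to an endpoint of $\beta$.

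If no such choice makes $\beta$ a sink, I would fall back on an alternative route. I would recall Muller's equivalent characterization that $(\alpha,\beta)$ is a covering pair if $\beta$ admits no arrows out of it into mutable vertices in some seed. I would then argue via the relation $\alpha\alpha'=\dots$ that the exchange binomials generate the unit ideal, showing directly that $\alpha,\beta$ cover.

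\textbf{Main obstacle.} I expect Step 2 to be the main difficulty: controlling the global quiver to exclude bi-infinite paths, especially on surfaces with genus or punctures where the triangle beyond $\beta$ may be complicated. My first attempt would be the local "make $\beta$ a sink" construction via a fan triangulation at an endpoint of $\beta$.
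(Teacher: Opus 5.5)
\textbf{Verdict: the target is right, but there is a genuine gap.} The paper gives no proof of its own here; it only cites Muller. Your target, a seed in which $\beta$ is a sink of $Q^{mut}$, is the right one. However, you never construct that seed, and that construction is the entire content of the lemma. The hints you do give are either unspecified or point the wrong way.

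\emph{Why your hinted configuration fails.} Suppose, as your wording suggests, the boundary side of the far triangle $t'$ is the one whose arrow points into $\beta$. Then that side sits at the endpoint $w$ which $\beta$ shares with the boundary side $\gamma$ of $t$. The arrow out of $\beta$ in $t'$ then goes to the third side of $t'$, which is in general mutable, so $\beta$ is not a sink.

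\emph{A construction that works.} Orient so that $t$ gives $\alpha\to\beta\to\gamma$.
\begin{enumerate}
\item Let $v$ be the corner of $t$ between $\alpha$ and $\beta$; by hypothesis $v\neq w$.
\item Let $\eta$ be the boundary segment at $v$ reached by rotating from $\alpha$ past $\beta$.
\item Let $\lambda$ be the arc from $w$ to the far end of $\eta$, obtained by pushing $\beta\cup\eta$ slightly off itself, away from $t$.
\end{enumerate}
Then $\beta,\eta,\lambda$ bound an unpunctured triangle $t'$. The arc $\lambda$ is not contractible, because $\beta$ is not a boundary segment. It is compatible with $\alpha$ and $\beta$, because no end of $\alpha$ or $\beta$ lies in the corner at $v$ between $\beta$ and $\eta$: any end of $\alpha$ at $v$ precedes $\beta$ in the rotation there, and the other end of $\beta$ is at $w\neq v$.

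Now take any triangulation containing $\alpha,\beta,\lambda$. The arc $\beta$ lies only in $t$ and $t'$, which contribute $\alpha\to\beta\to\gamma$ and $\lambda\to\beta\to\eta$. Both out-neighbours of $\beta$ are frozen, so $\beta$ is a sink of $Q_T^{mut}$. The arrow $\alpha\to\beta$ survives: when $\lambda=\alpha$, as in $\Sigma_{0,2,2}$, it is doubled, never cancelled. Hence it lies on no bi-infinite path.

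\emph{Where the hypothesis enters.} This is precisely where distinctness of the endpoints of $\beta$ is used, not in any $2$-cycle phenomenon. If $v=w$, the boundary segment beyond $\beta$ at $v$ is $\gamma$ itself, which is already adjacent to the other end of $\beta$ inside $t$. So $\beta$ can never be followed by a boundary segment at that end, and the sink construction is unavailable.

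\textbf{Two further points.} First, your fallback would not prove the statement. Showing that $\alpha$ and $\beta$ generate the unit ideal yields the cover of Muller's Corollary 5.4, which is a consequence of being a covering pair, not the quiver-theoretic property in the definition. Likewise, ``no arrows from $\beta$ to mutable vertices in some seed'' is not an equivalent characterization due to Muller; it is just the trivial sufficient condition you were already aiming for. Second, ``relabel if necessary'' is not harmless, because the hypothesis is asymmetric. In the mirror configuration, where $t$ gives $\beta\to\alpha$, you should keep the names and make $\beta$ a source by the mirror-image construction. Swapping names would move the distinct-endpoint hypothesis onto the tail of the arrow.
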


\begin{lemma}\cite[Lemma 10.2]{Mul13}
    Let $\alpha$ be a tagged arc in $\Sigma_{g,b,m}^p$ which cuts out a once-punctured digon containing radius $\beta$ (Figure \ref{fig: covering pair punctured}). Then $(\alpha,\beta)$ are a covering pair in $\mathcal{A}(\Sigma_{g,b,m}^p; \Bbbk)$.
\end{lemma}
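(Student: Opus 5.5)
This is \cite[Lemma 10.2]{Mul13}, so one could simply cite it. To argue it directly, I would exhibit one explicit tagged triangulation $T$ containing $\alpha$ and $\beta$ in which the arrow between them is not on any bi-infinite path in $Q_T^{mut}$.

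\textbf{Step 1: reduce to a reachability condition.} Since $Q_T^{mut}$ is finite, an arrow $\alpha \to \beta$ lies on a bi-infinite path exactly when two things hold. First, $\alpha$ is reachable from some oriented cycle of $Q_T^{mut}$. Second, some oriented cycle is reachable from $\beta$. So it suffices to build $T$ for which every forward path out of $\beta$ in $Q_T^{mut}$ terminates. Equivalently, no oriented cycle is reachable from $\beta$. The backward direction from $\alpha$ could be used symmetrically if that is more convenient.

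\textbf{Step 2: choose the triangulation inside the digon.} Let the once-punctured digon $D$ be bounded by $\alpha$ and a second side $\gamma$, and let $v$ be the endpoint of $\beta$ on $\partial D$. Inside $D$ I would take the self-folded configuration: the plain radius $\beta$ together with its notched version $\beta^{\bowtie}$ at the same vertex $v$. In the Fomin--Shapiro--Thurston quiver these two arcs have identical neighbourhoods and no arrows between them. In the triangle bounded by $\alpha$, $\gamma$ and the self-folded pair, the cyclic orientation then gives, say, $\alpha \to \beta$ and $\beta \to \gamma$. There are no other arrows at $\beta$, and the same holds for $\beta^{\bowtie}$.

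\textbf{Step 3: control $\gamma$.} Every forward path from $\beta$ must pass through $\gamma$. If $\gamma$ is a boundary arc, it is frozen and we are done immediately. Otherwise I would complete $T$ outside $D$ so that the part of $Q_T^{mut}$ reachable forward from $\gamma$ is acyclic and terminates at frozen vertices.

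The construction I would try first imitates the acyclic polygon triangulations of Section~\ref{section: anticlique}. Cut the surface along $\gamma$ and the remaining arcs, and arrange the triangles met after $\gamma$ in a fan or zig-zag sequence. Each such triangle should contain a boundary arc, which the paper's hypothesis of at least one marked point on each boundary component allows. As in the acyclicity lemma for polygons, each triangle then contributes at most one mutable arrow, so the forward paths from $\gamma$ strictly descend toward the boundary. In particular they cannot return to $\alpha$, since the arrow between $\gamma$ and $\alpha$ points $\gamma \to \alpha$. If the forward direction cannot be made to terminate in this way, I would instead choose the triangulation so that the backward direction from $\alpha$ terminates, using Step 1 symmetrically.

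\textbf{Main obstacle.} The main obstacle is Step 3 for surfaces with higher genus, several boundary components or further punctures. There one must check that the arcs needed to triangulate handles, and punctures other than the one in $D$, can be placed so that they are not forward-reachable from $\gamma$. They may still lie on cycles elsewhere in the quiver, which does not matter. I would handle this by first triangulating a neighbourhood of a boundary component containing $D$ in the acyclic fan pattern, and putting all remaining topology on the far side. I would then verify, arrow by arrow using the clockwise-orientation rule, that no arrow leads from that neighbourhood back into the complicated region.
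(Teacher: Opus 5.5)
Your argument is correct. The paper gives no proof of its own (it only cites Muller), and your Step 2 is the natural local argument.

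The one thing to fix is your treatment of $\gamma$. The hypothesis that $\alpha$ \emph{cuts out} the once-punctured digon means that $\alpha$ alone separates it from the rest of $\Sigma$. So the second side $\gamma$ is necessarily a boundary segment. This is exactly how the paper uses the lemma: cutting along $\alpha$ yields $\Sigma_{g,b,m}^{p-1}\sqcup\Sigma_{0,1,2}^1$, and in the figure the bottom side of the digon is boundary.

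Hence your Step 2 together with the first sentence of Step 3 already finishes the proof. In $Q_T^{mut}$ the only neighbour of $\beta$ is $\alpha$, so $\beta$ is a sink, and the arrow cannot lie on any bi-infinite path. With the opposite orientation $\beta$ is a source, and the pair is read as $(\beta,\alpha)$. The ``otherwise'' branch of Step 3 and your ``main obstacle'' never arise. That is just as well, since that branch is only a plan: you never verify that the region forward-reachable from $\gamma$ can be made acyclic.

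You also do not need the self-folded choice. Any tagged triangulation containing $\alpha$ and $\beta$ works. The only other completion inside the digon is the radius at the other vertex with the same tagging at the puncture. There, the two opposite arrows between the radii cancel, and each radius is again adjacent only to $\alpha$ and $\gamma$; compare the paper's quiver of the punctured digon.
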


\begin{figure}[htb]
    \centering
    \begin{tikzpicture}
        \path[fill=black!10] (-3,1.2) to (3,1.2) to (2,-1) to (-2,-1) to (-3,1.2);
        \draw[thick] (-3,1.2) to (-2,-1) (-2,-1) to (2,-1) (2,-1) to (3,1.2);
        \draw[dashed] (-3,1.2) to (3,1.2);
        
        \node[dot] (1) at (-2,-1) {};
        \node[dot] (2) at (2,-1) {};
        \node[dot] (p) at (0,0) {};
        \draw[thick] (2) to (p);
        \draw[thick] (1) to [out=75,in=180] (0,0.8) to [out=0,in=105] (2);
        \node at (-1.22,0) {$\alpha$};
        \node at (0.4,-0.54) {$\beta$};
    \end{tikzpicture}
    \caption{A covering pair in a punctured surface.}
    \label{fig: covering pair punctured}
\end{figure}

\begin{prop}\cite[Corollary 5.4]{Mul13}
    Let $(\alpha,\beta)$ be a covering pair in an ice quiver $Q$. Then $\mathcal{A}(A(Q)[\alpha^{-1}]; \Bbbk)$ and $\mathcal{A}(A(Q)[\beta^{-1}]; \Bbbk)$ cover $\mathcal{A}(A(Q); \Bbbk)$.
\end{prop}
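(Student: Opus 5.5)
The plan is to reduce the statement to a local computation at a single arrow of the quiver. The key tool is Muller's characterization of covering pairs: an arrow $\alpha\to\beta$ lying on no bi-infinite path in $Q^{mut}$.

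First I will show that every $\Bbbk$-point lies in one of the two loci. Take a point of $\mathcal{A}(A(Q);\Bbbk)$, that is, a ring homomorphism $\phi\colon A(Q)\to\Bbbk$ or more generally a prime $\mathfrak p$. I claim $\phi(\alpha)$ and $\phi(\beta)$ cannot both vanish. Equivalently, the ideal $(\alpha,\beta)$ is the unit ideal.

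\begin{itemize}
\item Since $(\alpha,\beta)$ is a covering pair, I will use the exchange relation at $\alpha$ or at $\beta$. Because the arrow $\alpha\to\beta$ is not on a bi-infinite path, either $\alpha$ has no incoming mutable arrows or $\beta$ has no outgoing mutable arrows. Otherwise one could extend backward from $\alpha$ and forward from $\beta$ indefinitely, since a finite quiver with incoming arrows at every step admits infinite paths.
\item Suppose first that $\beta$ has no outgoing mutable arrows. Its exchange relation reads $\beta\beta' = M_1 + M_2$. Here $M_1$, the product over incoming arrows, is divisible by $\alpha$. $M_2$, the product over outgoing arrows, involves only frozen variables, which are invertible in $A(Q)$.
\item Setting $\alpha=\beta=0$ then gives $0=M_2$, where $M_2$ is a unit. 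This is a contradiction, so $1\in(\alpha,\beta)$. The case where $\alpha$ has no incoming mutable arrows is symmetric: $\alpha\alpha' = M_1+M_2$, with $\beta\mid M_2$ and $M_1$ frozen, hence a unit.
\end{itemize}

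This forces the reduction in the first bullet to be done with care. As stated, it is not quite literally true, because bi-infinite-path freeness only says no single path through the arrow is bi-infinite. The precise dichotomy is that either every backward path from $\alpha$ terminates or every forward path from $\beta$ terminates. This alone does not make $\alpha$ a source.

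I expect this to be the main obstacle. To handle it I will follow Muller's approach, inducting along the finite backward (respectively forward) tree. At a source $\gamma$ on that tree, the exchange relation shows $\gamma$ is invertible modulo the ideal generated by its out-neighbors. Propagating this along the finite paths shows that $\alpha=\beta=0$ forces some frozen monomial to vanish, which is impossible. It is cleaner to phrase this as showing $1\in(\alpha,\beta)$ after localizing successively.

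With $(\alpha,\beta)=A(Q)$ established, the open sets $D(\alpha)$ and $D(\beta)$ cover $\Spec A(Q)$, and these are exactly $\Spec A(Q)[\alpha^{-1}]$ and $\Spec A(Q)[\beta^{-1}]$. Base change to $\Bbbk$ preserves the unit-ideal identity, giving the covering of $\mathcal{A}(A(Q);\Bbbk)$. A final check is that the localizations are themselves cluster algebras, namely those with $\alpha$ or $\beta$ frozen. This is Muller's Lemma on freezing for covering pairs, and it is needed only if one wants to interpret the pieces as cluster varieties; the covering statement itself does not require it.
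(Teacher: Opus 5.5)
Your strategy (show that no prime of $A(Q)$, or for the point count no $\Bbbk$-point, contains both $\alpha$ and $\beta$, by chasing exchange relations) is the right one. It is essentially how Muller proves the cited corollary; the paper gives no proof of its own. Your sink/source special case is correct. The gap is the general case, which you flag as the main obstacle but only gesture at. The invariant you propose there does not do the job. ``$\gamma$ is invertible modulo the ideal generated by its out-neighbors'' is a statement about a vertex and the ideal of \emph{all} its out-neighbors. Even if you pushed it up to $\alpha$, it would only give $1\in(\alpha,\beta,\dots)$, not $1\in(\alpha,\beta)$. There is also no evident way to pass it from the in-neighbors of $\gamma$ to $\gamma$ itself.

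The missing idea is an invariant attached to \emph{arrows}, moved along by primality. Suppose $\gamma\to\delta$ is an arrow of $Q^{mut}$ with $\gamma,\delta\in\mathfrak p$ for a prime $\mathfrak p$. In the exchange relation $\gamma\gamma'=\prod_{\epsilon\to\gamma}\epsilon+\prod_{\gamma\to\eta}\eta$, the left side and the second term lie in $\mathfrak p$, hence so does the first. Its frozen factors are units, so some mutable $\epsilon\to\gamma$ has $\epsilon\in\mathfrak p$; if there is no such $\epsilon$, a unit lies in $\mathfrak p$, a contradiction. Symmetrically, the exchange relation at $\delta$ gives a mutable $\delta\to\eta$ with $\eta\in\mathfrak p$. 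Start from $\alpha\to\beta$ and iterate in both directions. This produces a bi-infinite path in $Q^{mut}$ through $\alpha\to\beta$, contradicting the definition of a covering pair directly. So your first bullet, the corrected dichotomy, and the induction on a finite tree can all be dropped.

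If you prefer an ideal-theoretic induction over the finite set of ancestors of $\alpha$, the statement to induct on is $(\gamma,\delta)=A(Q)$ for every arrow $\gamma\to\delta$ whose tail lies in that set, and dually for descendants of $\beta$. It is proved from the same exchange relation: by induction, the in-monomial of $\gamma$ is a unit modulo $(\gamma)$, and it also lies in $(\gamma,\delta)$. The rest of your argument is fine: identifying the pieces with $D(\alpha)$ and $D(\beta)$, and base-changing to $\Bbbk$.
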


That is, we can cover the cluster variety $\mathcal{A}(\Sigma_{g,b,m}^p; \Bbbk)$ with the two varieties corresponding to the localizations of the cluster variables of the covering pair. Topologically, we can realize these varieties via cutting.

\begin{prop}\cite[Proposition 5.3.9]{BeyerDissertation}
    Let $\Sigma_{g,b,m}^p$ be a triangulable marked surface, and let $S$ be a compatible collection of non-boundary marked arcs in $\Sigma$. Then the map $\Sigma \smallsetminus S \to \Sigma$ induces an isomorphism 
    \[ A(\Sigma \smallsetminus S)/\langle s^\prime - s^{\prime \prime}, \forall s \in S \rangle \overset{\sim}{\longrightarrow} A(\Sigma)[S^{-1} ] \]
    Here, $s^\prime$ and $s^{\prime \prime}$ denote the two preimages of $s$ in $\Sigma \smallsetminus S$.
\end{prop}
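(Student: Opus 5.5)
The plan is to show that the map is well defined, surjective, and injective. For surjectivity and injectivity I will compare explicit generators and relations coming from the skein relations. I also expect to use the Laurent phenomenon in a cluster containing $S$.

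\textbf{Well-definedness.} Extend $S$ to a triangulation $T$ of $\Sigma$. Cutting along $S$ gives a triangulation $T'$ of $\Sigma\smallsetminus S$. In $T'$, each $s\in S$ becomes two boundary arcs $s'$ and $s''$, which are therefore frozen and invertible in $A(\Sigma\smallsetminus S)$.

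Every marked arc $\gamma'$ of $\Sigma\smallsetminus S$ maps to a marked arc $\gamma$ of $\Sigma$ that does not cross $S$. I send the cluster variable $x_{\gamma'}$ to $x_\gamma$, and send both $s'$ and $s''$ to $s$. This is exactly why the relation $s'-s''$ is killed. Since each $s$ becomes invertible, the target must be $A(\Sigma)[S^{-1}]$.

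The map respects relations for two reasons.
\begin{enumerate}[(i)]
\item Both algebras embed into Laurent rings in the clusters of $T'$ and $T$. After setting $s'=s''=s$, these Laurent rings are identified, and the variables of $T'$ go to the variables of $T$.
\item Every exchange or skein relation in $\Sigma\smallsetminus S$ involves arcs avoiding $S$, so it maps to the same skein relation in $\Sigma$.
\end{enumerate}
Thus the map is the restriction of the identification of fraction fields, and it is a well-defined ring homomorphism.

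\textbf{Surjectivity.} This is the main obstacle. I must show that every cluster variable $x_\gamma$ of $\Sigma$ lies in the image after inverting $S$, including when $\gamma$ crosses arcs of $S$. I would induct on the total intersection number $\sum_{s\in S}\#(\gamma\cap s)$.

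If $\gamma$ crosses some $s$, the skein relation resolving the crossing gives
\[ s\,x_\gamma = x_{\gamma_1}x_{\gamma_2} + x_{\gamma_3}x_{\gamma_4}, \]
where each resolved curve has strictly fewer crossings with $S$. Some resolved curves may be non-arc multicurves or loops. These I would handle by expressing them as polynomials in arc variables through further skein relations. This is standard for unpunctured surfaces. For punctured surfaces, tagged arcs and notched ends need care, and I would follow the tagged skein relations of Musiker--Williams \cite{MW13}.

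Dividing by $s$, which is now invertible, places $x_\gamma$ in the image by induction. The inverses of frozen variables are clearly hit. Therefore the map is surjective.

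\textbf{Injectivity.} I will use the Laurent embeddings. After quotienting by $s'-s''$, the source embeds into the Laurent ring of the cluster of $T'$ with $s'=s''$. Since $s'$ and $s''$ are frozen and invertible, this quotient of the Laurent ring is again a Laurent ring, and it is identified with the Laurent ring of $T$. The target also embeds into this same Laurent ring.

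To see that the quotient of $A(\Sigma\smallsetminus S)$ still injects into it, I would use that $A(\Sigma\smallsetminus S)$ equals its upper cluster algebra. This holds because the surface is locally acyclic \cite{Mul13}. Its quotient by the frozen difference then embeds compatibly, since the defining intersections of Laurent rings commute with specializing frozen variables. This last compatibility is the second delicate point.

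If that direct argument fails, I would fall back on covering pairs. Both sides are locally acyclic, and one can cover by acyclic localizations. On acyclic pieces, the equality of the cluster algebra with the polynomial presentation of Berenstein--Fomin--Zelevinsky makes the comparison explicit.
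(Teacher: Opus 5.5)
The paper gives no argument for this statement; it quotes it from \cite{BeyerDissertation}. Your sketch therefore has to stand on its own, and its surjectivity step does not.

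\textbf{Surjectivity.} Your induction on interior crossings with $S$ has a false base case once punctures are present, because tagged incompatibility is invisible to crossings. Take the once-punctured digon with $S=\{r_a\}$, a plain radius. The notched radius $r_b^{\bowtie}$ meets $r_a$ only at the puncture, so it has zero crossings with $S$. Yet it is not the image of any arc of $\Sigma\smallsetminus S$, since the puncture has become a boundary marked point. It lies in the image only through the relation $x_{r_b^{\bowtie}}=(y_1+y_2)/x_{r_a}$.

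For the same reason your map is not ``arc to arc''. When $s\in S$ is a radius, the diagonal of $\Sigma\smallsetminus S$ cutting off the triangle with sides $s',s''$ goes to the loop around the puncture, i.e.\ to $x_s x_{s^{\bowtie}}$.

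Even without punctures, resolving crossings produces self-crossing generalized arcs, and resolving those produces closed curves in $\Sigma\smallsetminus S$. You need their elements to lie in $A(\Sigma\smallsetminus S)$. A skein relation against an arc $\delta$ only gives them with a denominator $x_\delta$, and $x_\delta$ is not invertible there. So this is a separate theorem (bracelets lie in $A$, or $A=U$ plus Laurentness), not a formal consequence of ``further skein relations''.

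The clean repair is Muller's sandwich \cite{Mul13}. Let $A^\dagger$ be the cluster algebra of $\Sigma$ with the arcs of $S$ frozen, and $U$, $U^\dagger$ the corresponding upper cluster algebras. Then $A^\dagger\subseteq A(\Sigma)[S^{-1}]\subseteq U(\Sigma)[S^{-1}]\subseteq U^\dagger$. Moreover $A^\dagger$ is locally acyclic, because its mutable quiver is that of the cut surface and the covering-pair argument applies. Hence $A^\dagger=U^\dagger$ and all four algebras coincide. This gives surjectivity uniformly, punctures included, without skein relations. You only need to check that the image of your map is $A^\dagger$; the loop diagonals supply the notched arcs $s^{\bowtie}$.

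\textbf{Injectivity.} Your route is sound, but ``intersections commute with specializing frozen variables'' is neither the right statement nor proved. What you need is only this: if $f\in U(\Sigma\smallsetminus S)=A(\Sigma\smallsetminus S)$ is divisible by $s'-s''$ in one Laurent ring $\mathcal{L}_{T'}$, then it is divisible in the Laurent ring $\mathcal{L}_t$ of every seed $t$.

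Since $s'-s''$ is a prime of the frozen coefficient ring, each seed gives a Gauss valuation $v_t$ on the fraction field. The mutable variables of $t$ have $v_{T'}=0$. Their reductions modulo $s'-s''$ are algebraically independent, because the specialized exchange binomials are subtraction-free and hence nonzero, so specialized mutation is still birational. This forces $v_t=v_{T'}$. Therefore $f/(s'-s'')\in\bigcap_t\mathcal{L}_t=U=A$, which says exactly that the kernel is $\langle s'-s''\rangle$.

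With this lemma and the sandwich above, your outline becomes a proof.
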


\begin{ex}\label{ex: annulus cutting}
Using the covering pair in Figure \ref{fig: covering pair}, we can cover the cluster variety associated to an annulus $\Sigma_{0,2,m}$ using two polygons with $m+2$ sides, as in Figure \ref{fig: cuttings}.
\end{ex}

\begin{figure}[htb]
    \centering
    \begin{tikzpicture}
       \path[fill=black!5] (-2,2) to (-1,2) to (0,0) to (-1,-2) to (-2,-2) to (-4,-2) to (-5,0) to (-4,2) to (-2,2);
        \draw[thick] (-2,2) to (-1,2) (-1,2) to (0,0) (0,0) to (-1,-2) (-1,-2) to (-2,-2) (-3,-2) to (-4,-2) (-4,-2) to (-5,0) (-5,0) to (-4,2) (-4,2) to (-3,2);
        \draw[dashed] (-2,-2) to (-3,-2) (-2,2) to (-3,2);
        
        \node[dot] (1) at (-1,2) {};
        \node[dot] (2) at (0,0) {};
        \node[dot] (3) at (-1,-2) {};
        \node[dot] (4) at (-4,2) {};
        \node[dot] (5) at (-5,0) {};
        \node[dot] (6) at (-4,-2) {};
        \draw[thick] (1) to (3);
        \node at (-1.22,0) {$\alpha$};
        \node at (-0.3,1.1) {$\beta^\prime$};
        \node at (-4.7,-1.1) {$\beta^{\prime \prime}$};
        \node at (-0.3,-1.1) {$\gamma$};
    \end{tikzpicture}
    \hspace{0.5cm}
    \begin{tikzpicture}
       \path[fill=black!5] (2,2) to (1,2) to (0,0) to (1,-2) to (2,-2) to (4,-2) to (5,0) to (4,2) to (2,2);
        \draw[thick] (2,2) to (1,2) (1,2) to (0,0) (0,0) to (1,-2) (1,-2) to (2,-2) (3,-2) to (4,-2) (4,-2) to (5,0) (5,0) to (4,2) (4,2) to (3,2);
        \draw[dashed] (2,-2) to (3,-2) (3,2) to (2,2);
        
        \node[dot] (1) at (1,2) {};
        \node[dot] (2) at (0,0) {};
        \node[dot] (3) at (1,-2) {};
        \node[dot] (4) at (4,2) {};
        \node[dot] (5) at (5,0) {};
        \node[dot] (6) at (4,-2) {};
        \draw[thick] (1) to (3);
        \node at (1.22,0) {$\beta$};
        \node at (4.7,-1.1) {$\alpha^\prime$};
        \node at (0.3,1.1) {$\alpha^{\prime \prime}$};
        \node at (0.3,-1.1) {$\gamma$};
    \end{tikzpicture}
    \caption{Cutting along $\alpha$ produces a polygon where two copies of $\alpha$ are nonconsecutive boundary arcs (right), and cutting along $\beta$ produces a polygon where two copies of $\beta$ are nonconsecutive boundary arcs (left).}
    \label{fig: cuttings}
\end{figure}

\section{Cluster varieties of unpunctured surfaces}\label{section: unpunctured}

While our goal is a general formula for the number of points in the cluster variety associated to an general marked surface with boundary, we will begin with genus 0 surfaces and build a more general formula incrementally. 

\subsection{Annuli}

As noted in Example \ref{ex: annulus cutting}, cutting the annulus at either arc of the covering pair in Figure \ref{fig: covering pair} produces a polygon with $m+2$ sides. The cluster varieties corresponding to these polygons cover the cluster variety corresponding to $\Sigma_{0,2,m}$. We need only count the intersection of these varieties. 

One minor complication is that each polygon resulting from cutting contains two copies of the cut arc, so those arcs must take the same value. The cluster variety corresponding to one of these cut polygons \textemdash which we will denote as $\Delta_m/\!\sim $ \textemdash has one fewer choice of value for the boundary arcs, resulting in the following. 

\begin{lemma}[Corollary to Proposition \ref{prop:polygon points}]
Given a polygon $\Delta_m/\!\sim$ with $m\geq 4$ sides that is the result of cutting, the corresponding cluster variety over $\mathbb{F}_q$ contains \[ \#\mathcal{A}(\Delta_m/\!\sim; \mathbb{F}_q) = (q-1)^{m-2} \Bigg( \frac{q^{m-1} + (-1)^{m-2}}{q+1} \Bigg) \] 
points.
\end{lemma}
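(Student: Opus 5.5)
The count in Proposition \ref{prop:polygon points} has a product structure: the factor $(q-1)^{m-1}$ records the free choice of values for the frozen (boundary) variables, and the remaining factor counts the mutable data. I would prove the lemma by making this precise and then imposing the single linear identification $s' = s''$ of the two boundary copies of the cut arc.

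\textbf{Step 1 (torus action).} First I would observe that $\mathcal{A}(\Delta_m;\mathbb{F}_q)$ carries a free action of the torus $(\mathbb{F}_q^\times)^{m}$ of boundary rescalings. Rescaling a boundary arc $y_j$ by $t_j$ rescales each diagonal by suitable monomials in the $t$'s at its endpoints; concretely, assign a weight $u_v$ to each vertex and scale the arc $ij$ by $u_iu_j$. This is compatible with all Ptolemy relations, since each relation is homogeneous of degree one at each of the four vertices involved. The map $u\mapsto(u_iu_{i+1})$ from vertex weights to boundary weights has kernel $\pm1$ when $m$ is even. This creates a subtlety, and I expect it to be the main obstacle.

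\textbf{Step 1, alternative route.} A cleaner way to avoid that subtlety is to use the acyclic stratification directly. Choose an acyclic triangulation in which every mutable arc is adjacent to some boundary arc, such as the fan or zigzag triangulations. Then each stratum $\mathcal{O}_I$ has the form $(\text{frozen torus})\times(\text{stuff})$, and the exchange relations let us solve for the frozen values. In fact the simplest argument is: the frozen variables $y_1,\dots,y_m$ form coordinates on a torus $T=(\mathbb{F}_q^\times)^m$, and the map $\mathcal{A}(\Delta_m)\to T$ is a trivial fibration. Triviality holds because, in Lam--Speyer's count $\sum_k |\mathcal{I}_k|q^k(q-1)^{2m-3-2k}$, every stratum contributes a factor $(q-1)^{m}$ coming from frozen coordinates that appear freely. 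Hence every fiber has $\#\mathcal{A}(\Delta_m;\mathbb{F}_q)/(q-1)^m$ points, and the fibers are all isomorphic via the rescaling action.

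\textbf{Step 2 (impose the identification).} Given the equidistribution of fibers, the variety $\Delta_m/\!\sim$ is the preimage of the subtorus $\{y_a=y_b\}$, which has $(q-1)^{m-1}$ points. Therefore
\[
\#\mathcal{A}(\Delta_m/\!\sim;\mathbb{F}_q)=\frac{\#\mathcal{A}(\Delta_m;\mathbb{F}_q)}{q-1}=(q-1)^{m-2}\,\frac{q^{m-1}+(-1)^m}{q+1},
\]
and since $(-1)^m=(-1)^{m-2}$ this is exactly the claimed formula.

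\textbf{Main obstacle.} The crux is justifying that all fibers over the frozen torus have the same size. The easiest way to do this is to exhibit, for each boundary arc $y_j$ separately, a rescaling automorphism that multiplies $y_j$ by $t$ while fixing all other frozen variables. I would do this via a weight assignment on vertices and arcs in the universal-coefficient (principal/boundary) setting. Since the boundary arcs are full-rank frozen variables (the algebra is ``really full rank''), such a grading exists, giving a free $(\mathbb{F}_q^\times)^m$-action that acts simply transitively on the frozen coordinates.
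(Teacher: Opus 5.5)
Your Step 2 is the same reduction the paper uses: the cut polygon has ``one fewer choice of value for the boundary arcs,'' so one divides by $q-1$. The gap is in your justification of it, which is false whenever $m$ is even.

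\textbf{Where it fails.} The map $\mathcal{A}(\Delta_m;\mathbb{F}_q)\to(\mathbb{F}_q^\times)^m$ does not have fibers of constant size. For $\Delta_4$, with $xx'=y_1y_3+y_2y_4$, the fiber over $y$ has $2q-1$ points when $y_1y_3+y_2y_4=0$ and $q-1$ points otherwise. Indeed $\#\mathcal{A}(\Delta_4;\mathbb{F}_q)/(q-1)^4=(q^2-q+1)/(q-1)$ is not an integer.

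Your supporting claims fail for the same reason. The Lam--Speyer term $q(q-1)^3$ for $\Delta_4$ has no factor $(q-1)^4$. The stratum $\{x=0\}$ forces $y_1y_3+y_2y_4=0$, so it is not a product with the frozen torus. And ``really full rank'' does not give independent rescalings of the frozen variables. Prescribing frozen degrees $d_F$ requires solving $B^{T}d_M=-C^{T}d_F$ for the mutable degrees. This is possible for all $d_F$ only if the principal part $B$ is nonsingular, but for $A_{m-3}$ with $m$ even, $B$ is a skew-symmetric matrix of odd size. Concretely, in $\Delta_4$ the exchange relation forces $\deg y_1+\deg y_3=\deg y_2+\deg y_4$. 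Your parity in Step 1 is also reversed: $u\mapsto(u_ju_{j+1})_j$ has kernel $\pm1$ for $m$ odd and a one-dimensional torus for $m$ even.

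\textbf{How to repair it.} The lemma is still true, and your torus idea can be fixed. For $m$ even, put $\chi(y)=\prod_{j\text{ odd}}y_j/\prod_{j\text{ even}}y_j$.
\begin{enumerate}[(i)]
\item Solving $u_{j+1}=t_j/u_j$ around the polygon shows that the vertex-weight action maps $(\mathbb{F}_q^\times)^m$ onto $\ker\chi$ already on $\mathbb{F}_q$-points. Hence the fiber size $N(y)$ depends only on $\chi(y)$.
\item Restricted to the subtorus $\{y_a=y_b\}$, $\chi$ is still a surjective character onto $\mathbb{F}_q^\times$, since the remaining $m-2\geq 2$ coordinates occur to the power $\pm1$.
\item So each value of $\chi$ is taken $(q-1)^{m-2}$ times on the subtorus, versus $(q-1)^{m-1}$ times on the full torus. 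This gives $\sum_{y_a=y_b}N(y)=\frac{1}{q-1}\sum_{y}N(y)$, whatever the positions of the two copies of the cut arc.
\end{enumerate}

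For $m$ odd your argument does work, but for a different reason than the one you cite. The principal part of the $A_{m-3}$ exchange matrix is unimodular when $m-3$ is even, so arbitrary integral frozen degrees lift to a grading and all fibers over the frozen torus are isomorphic. The vertex weights alone would reach only an index-$2$ subgroup of the frozen torus when $q$ is odd.
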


The intersection consists of points that are non-zero on both arcs of the covering pair. Consider the polygon on the right side of Figure \ref{fig: cuttings}, where $\alpha$ is  a boundary arc so it must take a non-zero value. In the intersection, $\beta$ must also be non-zero, so we can treat it as boundary of an $(m+1)$-gon. The variety corresponding to that smaller polygon will be contained in the intersection of the varieties. Considering the possible values of the boundary arcs of the original $(m+2)$-gon \textemdash namely $\alpha$ and $\gamma$ \textemdash we see that $\alpha$ must take the same value as the other copy of the cut arc, but there are $q-1$ possible values of $\gamma$, giving us $q-1$ copies of this $(m+1)$-gon in the intersection of the two varieties.

\begin{prop}\label{prop:annulus points}
    Given an annulus $\Sigma_{0,2,m}$ with at least one marked point on each boundary component, the corresponding cluster variety over $\mathbb{F}_q$ contains \[ \#\mathcal{A}(\Sigma_{0,2,m}; \mathbb{F}_q) = (q-1)^{m} \big( q^{m} + (-1)^{m} \big) \] points.
\end{prop}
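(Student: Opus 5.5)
Use inclusion--exclusion on the cover by the two localizations at the covering pair $(\alpha,\beta)$ of Figure \ref{fig: covering pair}. By \cite[Corollary 5.4]{Mul13}, the open sets $U_\alpha=\{\alpha\neq 0\}$ and $U_\beta=\{\beta\neq 0\}$ cover $\mathcal{A}(\Sigma_{0,2,m};\mathbb{F}_q)$. Hence
\[ \#\mathcal{A}(\Sigma_{0,2,m};\mathbb{F}_q) = \#U_\alpha + \#U_\beta - \#(U_\alpha\cap U_\beta). \]

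By the cutting isomorphism \cite[Proposition 5.3.9]{BeyerDissertation} and Example \ref{ex: annulus cutting}, each $U$ is the cluster variety of an $(m+2)$-gon in which the two copies of the cut arc are identified. The corollary to Proposition \ref{prop:polygon points} then gives
\[ \#U_\alpha=\#U_\beta=(q-1)^{m}\,\frac{q^{m+1}+(-1)^{m}}{q+1}. \]

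For the intersection, work in the right-hand polygon of Figure \ref{fig: cuttings}, where $\alpha$ is a boundary arc. Requiring $\beta\neq 0$ makes $\beta$ a frozen arc. It cuts off the triangle with sides $\alpha$, $\beta$, $\gamma$ and leaves an $(m+1)$-gon that contains the other copy $\alpha''$ of $\alpha$ as a boundary arc.

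A point of the intersection is determined by three pieces of data:
\begin{itemize}
\item a point of this $(m+1)$-gon's variety, with all its boundary values, including $\beta$ and $\alpha''$, nonzero;
\item the value of $\gamma$, which can be any of $q-1$ nonzero values;
\item the value of $\alpha'$, which is forced to equal $\alpha''$.
\end{itemize}
One should check that the remaining arcs, those in the cut-off triangle, are then determined. They are: the triangle has no mutable arcs, and via the exchange relations the values of arcs crossing $\beta$ are Laurent in these data. So
\[ \#(U_\alpha\cap U_\beta)=(q-1)\cdot(q-1)^{m}\,\frac{q^{m}+(-1)^{m+1}}{q+1}. \]
The identification of this with the locus in the annulus where both $\alpha,\beta\ne0$ is the main point to justify. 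It amounts to localizing further at $\beta$ in the cut algebra and applying \cite[Proposition 5.3.9]{BeyerDissertation} to the compatible pair $\{\alpha,\beta\}$. Cutting the annulus along both arcs yields exactly this $(m+1)$-gon glued to a triangle, with the two copies of $\alpha$ identified. Alternatively, one counts the polygon-with-triangle, $(q-1)^{m}\frac{q^m-(-1)^m}{q+1}(q-1)^{2}$, and divides by $q-1$ for the gluing constraint.

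Finally, simplify:
\[ (q-1)^m\frac{2q^{m+1}+2(-1)^m-(q-1)q^m+(q-1)(-1)^m}{q+1}=(q-1)^m\frac{(q+1)q^m+(q+1)(-1)^m}{q+1}, \]
which is $(q-1)^m\big(q^m+(-1)^m\big)$. The hardest step is getting the boundary-value bookkeeping of the intersection right: how many free frozen parameters remain after the identification. To verify this, check the case $m=2$ directly from the annulus's Kronecker-type presentation.
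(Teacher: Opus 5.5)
Your proposal is correct and follows the paper's proof. Like the paper, you use inclusion--exclusion over the covering pair: add two copies of $\#\mathcal{A}(\Delta_{m+2}/\!\sim;\mathbb{F}_q)$, subtract $(q-1)$ copies of $\#\mathcal{A}(\Delta_{m+1};\mathbb{F}_q)$, and simplify the same way. Your identification of the intersection, by applying the cutting isomorphism to the compatible pair $\{\alpha,\beta\}$ to get $A(\Delta_{m+1})$ with one extra invertible variable $\gamma$, is a somewhat more explicit version of the paper's informal justification.
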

\begin{proof}
	We add two copies of the variety of the $(m+2)$-gon $\Delta_{m+2}/\!\sim$ and subtract $(q-1)$ copies of the variety of the $(m+1)$-gon $\Delta_{m+1}$.

    \begin{align*}
        \#\mathcal{A}(\Sigma_{0,2,m}; \mathbb{F}_q) &= 2 (q-1)^{m} \Bigg( \frac{q^{m+1} + (-1)^{m}}{q+1} \Bigg) - (q-1)(q-1)^{m} \Bigg( \frac{q^{m} + (-1)^{m-1}}{q+1} \Bigg) \\
        &= \frac{(q-1)^{m}}{q+1} \big( 2\big(q^{m+1} + (-1)^{m}\big) - (q-1)\big( q^{m} + (-1)^{m-1}\big) \big) \\
        &= \frac{(q-1)^{m}}{q+1} \big( 2q^{m+1} + 2(-1)^{m} - q^{m+1} + q^m - q(-1)^{m-1} +(-1)^{m-1} \big) \\
        &= \frac{(q-1)^{m}}{q+1} \big( q^{m+1} +2(-1)^{m} +q^m +q(-1)^m - (-1)^m  \big) \\
        &= \frac{(q-1)^{m}}{q+1} \big( (q+1)q^{m} + (q+1)(-1)^{m} \big) \\
        &= (q-1)^{m} \big( q^{m} + (-1)^{m} \big).
    \end{align*}
\end{proof}

\begin{rem}
    In the case $q=2$, the above formula simplifies to $2^m + (-1)^m$, which is the $m$-th \emph{Jacobsthal-Lucas number} (Sequence \seqnum{A014551} in the OEIS \cite{oeis}). As a result, the number of points in a cluster variety corresponding to a marked annulus over $\mathbb{F}_q$ can be regarded as a generalization of the Jacobsthal-Lucas numbers. 
\end{rem}

\subsection{Other genus 0 surfaces}

More generally, given a surface $\Sigma_{0,b,m}$ of genus 0 with $b\geq 2$ boundary components and $m$ total marked points, we can find a covering pair in exactly the same way as with the annulus \textemdash a pair of arcs with end points on the boundary that cut out an unpunctured triangle. The cluster variety is then covered by two surfaces of genus 0 with $b-1$ boundary components. A formula can then be recursively defined as before: \[ \#\mathcal{A}(\Sigma_{0,b,m}; \mathbb{F}_q) =  2\#\mathcal{A}(\Sigma_{0,b-1,m+2}/\!\sim; \mathbb{F}_q) - (q-1) \#\mathcal{A}(\Sigma_{0,b-1,m+1}; \mathbb{F}_q).\]
\begin{prop}\label{prop:genus 0 points}
    Let $\Sigma_{0,b,m}$ be a genus 0 surface with $b\geq 1$ boundary components and $m$ total marked points, where either 
    \begin{itemize}
        \item $b = 1$ and $m\geq 4$, or
        \item $b \geq 2$ and each boundary component contains at least one marked point.
    \end{itemize}
    The corresponding cluster variety over $\mathbb{F}_q$ contains \[ \#\mathcal{A}(\Sigma_{0,b,m}; \mathbb{F}_q) = (q+1)^{b-2} (q-1)^{m+b-2} \big( q^{m+b-2} + (-1)^{m}\big) \] points.
\end{prop}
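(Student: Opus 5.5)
The plan is to induct on the number of boundary components $b$, using the recursion stated just before the proposition:
\[ \#\mathcal{A}(\Sigma_{0,b,m}; \mathbb{F}_q) =  2\#\mathcal{A}(\Sigma_{0,b-1,m+2}/\!\sim; \mathbb{F}_q) - (q-1) \#\mathcal{A}(\Sigma_{0,b-1,m+1}; \mathbb{F}_q). \]
This recursion comes from inclusion--exclusion over the cover given by a covering pair $(\alpha,\beta)$ (Lemma~\ref{lemma: Muller LA 10.1} and Muller's Corollary~5.4), with each piece identified with a cut surface.

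\textbf{Base cases.} For $b=1$ the claimed formula reads $(q+1)^{-1}(q-1)^{m-1}\big(q^{m-1}+(-1)^m\big)$, which is exactly Proposition~\ref{prop:polygon points}. For $b=2$ it reads $(q-1)^{m}\big(q^{m}+(-1)^m\big)$, which is Proposition~\ref{prop:annulus points}.

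\textbf{Justifying the recursion for general $b\geq 3$.} I would argue exactly as for the annulus.
\begin{itemize}
\item Choose two boundary components, each carrying a marked point, and take arcs $\alpha,\beta$ joining them that cut out an unpunctured triangle. By Lemma~\ref{lemma: Muller LA 10.1} these form a covering pair.
\item Cutting along $\alpha$ (respectively $\beta$) merges the two boundary components into one. The result is a genus~$0$ surface with $b-1$ boundary components and $m+2$ marked points, in which two boundary arcs are identified.
\item By Beyer's cutting isomorphism (Proposition~5.3.9), the localizations $A[\alpha^{-1}]$ and $A[\beta^{-1}]$ are the algebras of these cut surfaces modulo $s'=s''$.
\item The point count of $\Sigma/\!\sim$ is the count of the uncut surface divided by $q-1$. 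This holds because the frozen values enter the count as an independent factor of $(q-1)$ each, and the identification removes one free frozen value. (This is the same reasoning as the corollary to Proposition~\ref{prop:polygon points}, applied to the inductive formula.)
\item The intersection of the two charts consists of points with both $\alpha$ and $\beta$ nonzero. These are $q-1$ copies of $\mathcal{A}(\Sigma_{0,b-1,m+1})$, one for each value of the third side $\gamma$ of the triangle.
\item The cut surfaces still satisfy the hypotheses: every boundary component keeps a marked point, and when $b-1=1$ we have $m+2\geq 4$. So the induction hypothesis applies to them.
\end{itemize}

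\textbf{Algebra.} Substituting the inductive formula, the first term is
\[ 2(q+1)^{b-3}(q-1)^{m+b-2}\big(q^{m+b-1}+(-1)^m\big), \]
where the exponent $m+b-1$ has dropped by one from the $\sim$ identification. The second term is
\[ (q-1)(q+1)^{b-3}(q-1)^{m+b-2}\big(q^{m+b-2}-(-1)^{m}\big). \]
Factor out $(q+1)^{b-3}(q-1)^{m+b-2}$. The remaining bracket is
\[ 2q^{m+b-1}+2(-1)^m-(q-1)q^{m+b-2}+(q-1)(-1)^m = (q+1)\big(q^{m+b-2}+(-1)^m\big), \]
which gives the claimed formula.

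The main obstacle is not this algebra but the geometric justification of the recursion for general $b$. The delicate points are that the two charts really are the cut surfaces, that the intersection is exactly $q-1$ disjoint copies of the smaller surface, and that the $/\!\sim$ count is obtained by dividing by $q-1$. I would handle these by mimicking the annulus argument verbatim, since it is local to the triangle cut out by $\alpha,\beta,\gamma$ and does not see the rest of the surface.
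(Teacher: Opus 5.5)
Your proposal is essentially the paper's proof: induction on $b$ with the polygon and annulus counts as base cases, the same covering-pair recursion $2\#\mathcal{A}(\Sigma_{0,b-1,m+2}/\!\sim;\mathbb{F}_q)-(q-1)\#\mathcal{A}(\Sigma_{0,b-1,m+1};\mathbb{F}_q)$, and the same algebra, which you carry out correctly. The one weak step, which the paper shares, is your justification of the $/\!\sim$ count. Frozen values do not contribute independent factors of $q-1$: in $\Delta_4$, with $xx'=y_1y_3+y_2y_4$, the fiber has $2q-1$ or $q-1$ points according to whether $y_1y_3+y_2y_4=0$. The division by $q-1$ is better justified by the torus action scaling an arc with endpoints $i,j$ by $t_it_j$; this rescales $\alpha'$ while fixing $\alpha''$ (they share no endpoint in the cut surface), so the number of points with $\alpha'=\lambda\alpha''$ is independent of $\lambda$.
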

\begin{proof}
    First, consider the case of $b=1$ and $m\geq 4$. From Proposition \ref{prop:polygon points}, we have 
    \begin{align*}
     \#\mathcal{A}(\Sigma_{0,1,m}; \mathbb{F}_q) &= (q-1)^{m-1} \Bigg( \frac{q^{m-1} + (-1)^{m-2}}{q+1} \Bigg) \\
     &= (q+1)^{b-2} (q-1)^{m+b-2} \big( q^{m+b-2} + (-1)^{m}\big).
    \end{align*}
    
    Second, consider the case of $b=2$, where each boundary component contains at least one marked point. From Proposition \ref{prop:annulus points}, we have
    \begin{align*}
     \#\mathcal{A}(\Sigma_{0,2,m}; \mathbb{F}_q) &= (q-1)^{m} \big( q^{m} + (-1)^{m} \big) \\
     &= (q+1)^{b-2} (q-1)^{m+b-2} \big( q^{m+b-2} + (-1)^{m}\big).
    \end{align*}
    
    For our inductive hypothesis, suppose the theorem holds for all $1 \leq b<k$ for some $k$. Using our recursive formula, we compute the number of points in the cluster variety corresponding to $\Sigma_{0,k,m}$ to be 
    \begin{align*}
        \#\mathcal{A}(\Sigma_{0,k,m}; \mathbb{F}_q) &= 2 \#\mathcal{A}(\Sigma_{0,k-1,m+2}/\!\sim; \mathbb{F}_q) - (q-1)\#\mathcal{A}(\Sigma_{0,k-1,m+1}; \mathbb{F}_q) \\
        &= 2(q+1)^{k-3} (q-1)^{m+k-2} \big( q^{m+k-1} + (-1)^{m+2}\big) \\ &\quad\quad - (q+1)^{k-3} (q-1)^{m+k-1} \big( q^{m+k-2} + (-1)^{m+1}\big) \\
        &= (q+1)^{k-3} (q-1)^{m+k-2} \big( 2\big(q^{m+k-1} + (-1)^{m+2}\big) \\ &\quad\quad - (q-1)\big(q^{m+k-2} + (-1)^{m+1}\big)\big) \\
        &= (q+1)^{k-3} (q-1)^{m+k-2} \big( 2q^{m+k-1} + 2(-1)^{m+2} - q^{m+k-1} - q(-1)^{m+1} \\ &\quad\quad + q^{m+k-2} + (-1)^{m+1} \big) \\
        &= (q+1)^{k-3} (q-1)^{m+k-2} \big( q^{m+k-1} + (-1)^{m+2} - q(-1)^{m+1} + q^{m+k-2} \big) \\
        &= (q+1)^{k-2} (q-1)^{m+k-2}\big( q^{m+k-2} + (-1)^{m}\big).
    \end{align*}
    Thus, the formula holds for all $b \geq 1$. 
\end{proof}

\subsection{Positive genus surfaces}

\begin{figure}[ht]
    \centering
    \begin{tikzpicture}[scale=1.0]
        \draw[thick] (-3,-1) to [out=180,in=270] (-3.25,-0.6) to (-3.25,0.6) to [out=90,in=180] (-3,1) to [out=0,in=90] (-2.75,0.6) to (-2.75,-0.6) to [out=270,in=0] (-3,-1) {};
        \node[dot] (1) at (-2.75,0.5) {};
        \node[dot] (2) at (-2.75,-0.5) {};
        \draw[thick] (-3,1) to [out=0,in=180] (-2,1.5) to (2,1.5) {};
        \draw[thick] (-3,-1) to [out=0,in=180] (-2,-1.5) to (2,-1.5) {};
        \draw[dashed] (2,1.5) to (2,-1.5) {};
        \draw[thick] (-1.25,0) to [out=345,in=195] (0.25,0) to [out=150,in=30] (-1.25,0) {}; 
        \draw[thick] (-1.25,0) to [out=165,in=345] (-1.4,0.05) {};
        \draw[thick] (0.25,0) to [out=15,in=195] (0.4,0.05) {};
        \draw[thick] (1) to [out=10,in=170] (-0.1,0.6) to [out=350,in=90] (0.6,0) to [out=270,in=15] (0.1,-0.5) to [out=195,in=330] (1) {};      
        \draw[thick] (1) to [out=25,in=175] (0.3,1.0) to [out=355,in=90] (1.4,0) to [out=270,in=5] (0.3,-1.0) to [out=185,in=345] (2) {};   
        
        \node at (0.73,0) {$\alpha$};
        \node at (1.53,0) {$\beta$};
    \end{tikzpicture}
    \caption{A covering pair $(\alpha,\beta)$ in a marked surface with boundary and genus.}
    \label{fig: covering pair genus}
\end{figure}

Suppose $g\geq 1$. If there is a boundary component containing at least two marked points, the pair of arcs $(\alpha,\beta)$ pictured in Figure \ref{fig: covering pair genus} is a covering pair by Lemma \ref{lemma: Muller LA 10.1}. We can cut along this covering pair, giving us the recursive formula \[ \#\mathcal{A}(\Sigma_{g,b,m}; \mathbb{F}_q) =  2\#\mathcal{A}(\Sigma_{g-1,b+1,m+2}/\!\sim; \mathbb{F}_q) - (q-1) \#\mathcal{A}(\Sigma_{g-1,b+1,m+1}; \mathbb{F}_q).\]

If $g=1$, we have 
\begin{align*}
    \#\mathcal{A}(\Sigma_{1,b,m}; \mathbb{F}_q) &=  2\#\mathcal{A}(\Sigma_{0,b+1,m+2}/\!\sim; \mathbb{F}_q) - (q-1) \#\mathcal{A}(\Sigma_{0,b+1,m+1}; \mathbb{F}_q) \\
    &= 2 (q+1)^{b-1} (q-1)^{m+b} \big(q^{m+b+1} + (-1)^{m+2}\big) \\ &\quad\quad - (q+1)^{b-1} (q-1)^{m+b+1} \big(q^{m+b} + (-1)^{m+1}\big) \\
    &= (q+1)^{b-1} (q-1)^{m+b} \big(2q^{m+b+1} + 2(-1)^{m+2} - q^{m+b+1} + q^{m+b}  \\ &\quad\quad  - q(-1)^{m+1} - (-1)^{m+2}\big) \\
    &= (q+1)^{b-1} (q-1)^{m+b} \big(q^{m+b+1} + (-1)^{m+2} + q^{m+b} + q(-1)^{m+2} \big) \\
    &= (q+1)^{b-1} (q-1)^{m+b} \big(q^{m+b}(q+1) + (-1)^{m+2}(q+1) \big) \\
    &= (q+1)^{b} (q-1)^{m+b} \big(q^{m+b} + (-1)^{m+2} \big). 
\end{align*}

Using that result, if $g=2$, we find 
\begin{align*}
    \#\mathcal{A}(\Sigma_{2,b,m}; \mathbb{F}_q) &=  2\#\mathcal{A}(\Sigma_{1,b+1,m+2}/\!\sim; \mathbb{F}_q) - (q-1) \#\mathcal{A}(\Sigma_{1,b+1,m+1}; \mathbb{F}_q) \\
    &= 2 (q+1)^{b+1} (q-1)^{m+b+2} \big(q^{m+b+3} + (-1)^{m+4} \big) \\ &\quad\quad - (q+1)^{b+1} (q-1)^{m+b+3} \big(q^{m+b+2} + (-1)^{m+3} \big) \\
    &= (q+1)^{b+1} (q-1)^{m+b+2} \big(2q^{m+b+3} + 2(-1)^{m+4} - q^{m+b+3} + q^{m+b+2}  \\ &\quad\quad  - q(-1)^{m+3} - (-1)^{m+4}\big) \\
    &= (q+1)^{b+1} (q-1)^{m+b+2} \big(q^{m+b+3} + (-1)^{m+4} + q^{m+b+2} + q(-1)^{m+4} \big) \\
    &= (q+1)^{b+1} (q-1)^{m+b+2} \big(q^{m+b+2}(q+1) + (-1)^{m+4}(q+1) \big) \\
    &= (q+1)^{b+2} (q-1)^{m+b+2} \big(q^{m+b+2} + (-1)^{m+4} \big). 
\end{align*}

\begin{prop}\label{prop:genus g points}
    Let $\Sigma_{g,b,m}$ be a genus $g\geq 0$ surface with $b\geq 1$ boundary components and $m$ total marked points, where either 
    \begin{itemize}
        \item $g =0$, $b = 1$, and $m\geq 4$, or
        \item $g=0$, $b \geq 2$, and each boundary component contains at least one marked point, or
        \item $g \geq 1$, $b \geq 1$, $m\geq 2$, each boundary component contains at least one marked point, and at least two marked points lie on the same boundary component.
    \end{itemize}
    The corresponding cluster variety over $\mathbb{F}_q$ contains \[ \#\mathcal{A}(\Sigma_{g,b,m}; \mathbb{F}_q) = (q+1)^{2g+b-2} (q-1)^{2g+m+b-2} \big(q^{2g+m+b-2} + (-1)^{m}\big) \] points.
\end{prop}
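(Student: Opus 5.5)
The plan is to prove Proposition \ref{prop:genus g points} by induction on the genus $g$. The genus-zero cases are exactly Proposition \ref{prop:genus 0 points}, since $2g=0$ makes the claimed formula coincide with the one established there. For $g\geq 1$ I will use the covering pair $(\alpha,\beta)$ of Figure \ref{fig: covering pair genus}. By Lemma \ref{lemma: Muller LA 10.1} this is a covering pair because the chosen boundary component carries at least two marked points, so the endpoints of $\beta$ are distinct. Cutting along $\alpha$ or along $\beta$ cuts a handle, producing a surface of genus $g-1$ with $b+1$ boundary components and $m+2$ marked points, in which the two copies of the cut arc are identified. The two pieces intersect in the locus where both arcs are nonzero; as in the annulus argument, this is $q-1$ copies of $\Sigma_{g-1,b+1,m+1}$, the extra factor coming from the free nonzero value of the remaining boundary arc $\gamma$. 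Inclusion--exclusion then gives the recursion
\[ \#\mathcal{A}(\Sigma_{g,b,m}; \mathbb{F}_q) = 2\#\mathcal{A}(\Sigma_{g-1,b+1,m+2}/\!\sim; \mathbb{F}_q) - (q-1) \#\mathcal{A}(\Sigma_{g-1,b+1,m+1}; \mathbb{F}_q), \]
which is the recursion already stated before the proposition.

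Next I check that the inductive hypothesis applies to the cut surfaces. After cutting, the new boundary component carries marked points coming from the endpoints of the cut arc. All old boundary components keep their marked points, and some component still has at least two marked points. So for $g-1\geq 1$ the third hypothesis holds, and for $g-1=0$ we have $b+1\geq 2$, so the second hypothesis holds with every component marked. I also need the ``$/\!\sim$'' count: identifying the two copies of the cut arc removes one free nonzero boundary value. This divides the count by $(q-1)$, exactly as in the corollary to Proposition \ref{prop:polygon points}, so
\[ \#\mathcal{A}(\Sigma_{g-1,b+1,m+2}/\!\sim; \mathbb{F}_q) = (q+1)^{2g+b-3}(q-1)^{2g+m+b-2}\big(q^{2g+m+b-1}+(-1)^m\big). \]

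Substituting both terms and factoring out $(q+1)^{2g+b-3}(q-1)^{2g+m+b-2}$, the bracket becomes
\[ 2\big(q^{N+1}+(-1)^m\big)-(q-1)\big(q^{N}+(-1)^{m+1}\big) = (q+1)\big(q^{N}+(-1)^m\big), \qquad N=2g+m+b-2. \]
This is the same algebra carried out for $g=1,2$ above, and it yields the claimed formula.

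The main obstacle is justifying the geometric inputs rather than the algebra. Three things need care: that cutting along each arc of the pair really yields $\Sigma_{g-1,b+1,\cdot}$ (a non-separating cut across a handle); that the intersection of the two localizations is precisely $q-1$ copies of the $(m+1)$-marked surface; and that the hypotheses of the inductive statement persist. I would handle the first two by appealing to the cutting isomorphism \cite[Proposition 5.3.9]{BeyerDissertation} applied to $\{\alpha\}$, $\{\beta\}$ and $\{\alpha,\beta\}$, mirroring Example \ref{ex: annulus cutting}.
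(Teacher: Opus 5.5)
Your proposal is correct and follows the paper's own proof: induction on genus using the handle-cutting covering pair of Figure~\ref{fig: covering pair genus}, the recursion $\#\mathcal{A}(\Sigma_{g,b,m};\mathbb{F}_q)=2\#\mathcal{A}(\Sigma_{g-1,b+1,m+2}/\!\sim;\mathbb{F}_q)-(q-1)\#\mathcal{A}(\Sigma_{g-1,b+1,m+1};\mathbb{F}_q)$, and the same factorization of the bracket as $(q+1)\big(q^{N}+(-1)^m\big)$. The only difference is cosmetic: the paper also computes $g=1,2$ explicitly as base cases, whereas your single inductive step from $g=0$ already covers them. Your check that the cut surfaces still satisfy the hypotheses is a detail the paper leaves implicit.
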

\begin{proof}
    The base cases are above. We need only do induction on genus. Suppose that the theorem holds for all $0 \leq g < k$ for some $k>2$. We then compute the number of points in the cluster variety corresponding to $\Sigma_{k,b,m}$ to be
    \begin{align*}
        \#&\mathcal{A}(\Sigma_{k,b,m}; \mathbb{F}_q) =  2\#\mathcal{A}(\Sigma_{k-1,b+1,m+2}/\!\sim; \mathbb{F}_q) - (q-1) \#\mathcal{A}(\Sigma_{k-1,b+1,m+1}; \mathbb{F}_q) \\
        &= 2(q+1)^{2k+b-3} (q-1)^{2k+m+b-2} \big(q^{2k+m+b-1} + (-1)^{m+2}\big) \\ &\quad\quad - (q+1)^{2k+b-3} (q-1)^{2k+m+b-1} \big(q^{2k+m+b-2} + (-1)^{m+1}\big) \\
        &= (q+1)^{2k+b-3} (q-1)^{2k+m+b-2} \big( 2q^{2k+m+b-1} + 2(-1)^{m} -q^{2k+m+b-1} -q(-1)^{m+1}  \\ &\quad\quad  + q^{2k+m+b-2} - (-1)^{m} \big) \\
        &= (q+1)^{2k+b-3} (q-1)^{2k+m+b-2} \big( q^{2k+m+b-1} + (-1)^{m} - q(-1)^{m+1} +q^{2k+m+b-2} \big) \\
        &= (q+1)^{2k+b-3} (q-1)^{2k+m+b-2} \big( (q+1)q^{2k+m+b-2} + (q+1)(-1)^{m} \big) \\
        &= (q+1)^{2k+b-2} (q-1)^{2k+m+b-2} \big( q^{2k+m+b-2} + (-1)^{m} \big).
    \end{align*}
    Thus, the formula holds for all $g\geq 0$. 
\end{proof}

\section{Cluster varieties of punctured surfaces}\label{section: punctured surfaces}

We now begin to examine cluster varieties associated to punctured surfaces. If at least two marked points are contained in the same boundary component, we can use the covering pair from Figure \ref{fig: covering pair punctured}. Cutting along $\alpha$ disconnects the surface, giving us $\Sigma_{g,b,m}^{p-1} \bigsqcup \Sigma_{0,1,2}^1 /\!\sim$ with the two copies of $\alpha$ required to take the same value. Cutting along $\beta$ give us a surface with one fewer puncture $\Sigma_{g,b,m+2}^{p-1}/\!\sim$ where both copies of $\beta$ are required to take the same value.
We already have a formula for unpunctured surfaces, so we can attempt to find a recursive formula using that result, but first we will need to calculate the number of points in the cluster variety of the punctured digon. 

\subsection{Once-punctured digon}

Figure \ref{fig: punctured digon quiver} displays one quiver of the punctured digon. The corresponding cluster algebra can be presented as \[ A(\Sigma_{0,1,2}^1) \cong \langle x_1, x_1^\prime, x_2, x_2^\prime, y_1^{\pm 1}, y_2^{\pm 1} \;|\; x_1 x_1^\prime = x_2 x_2^\prime = y_1 + y_2 \rangle. \]

\begin{figure}[ht]
    \centering
    \begin{tikzpicture}
        \node[mutable] (x1) at (-2,0) {$x_1$};
        \node[mutable] (x2) at (2,0) {$x_2$};
        \node[frozen] (y1) at (0,0.5) {$y_1$};
        \node[frozen] (y2) at (0,-0.5) {$y_2$};
        
        \draw[-angle 90] (x1) to (y1);
        \draw[-angle 90] (y1) to (x2);
        \draw[-angle 90] (x2) to (y2);
        \draw[-angle 90] (y2) to (x1);
    \end{tikzpicture}
    \caption{A quiver of a punctured digon.}
    \label{fig: punctured digon quiver}
\end{figure} 

\begin{prop}\label{prop: digon count}
    The cluster variety of the punctured digon over $\mathbb{F}_q$ has \[ \#\mathcal{A}(\Sigma_{0,1,2}^1; \mathbb{F}_q) = q^4 - q^3 + q^2 -2q +1 = (q-1)(q^3 + q - 1) \] points.
\end{prop}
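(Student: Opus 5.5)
Since the paper gives the explicit presentation
\[ A(\Sigma_{0,1,2}^1) \cong \langle x_1, x_1^\prime, x_2, x_2^\prime, y_1^{\pm 1}, y_2^{\pm 1} \;|\; x_1 x_1^\prime = x_2 x_2^\prime = y_1 + y_2 \rangle, \]
I would count $\mathbb{F}_q$-points directly, with no covering argument. An $\mathbb{F}_q$-point is a ring homomorphism $A(\Sigma_{0,1,2}^1) \to \mathbb{F}_q$. Such a homomorphism is the same as a choice of values $(x_1, x_1^\prime, x_2, x_2^\prime, y_1, y_2) \in \mathbb{F}_q^6$ satisfying three conditions: $y_1, y_2 \neq 0$ (the frozen variables are inverted), $x_1 x_1^\prime = y_1 + y_2$, and $x_2 x_2^\prime = y_1 + y_2$.

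The plan is to fibre over the frozen values. Fix $(y_1, y_2) \in (\mathbb{F}_q^\times)^2$ and set $s = y_1 + y_2$. The two equations $x_1 x_1^\prime = s$ and $x_2 x_2^\prime = s$ are then independent, so the fibre has $N(s)^2$ points, where $N(s)$ is the number of solutions of $ab = s$ in $\mathbb{F}_q^2$.
\begin{itemize}
\item If $s \neq 0$, then $a$ can be any nonzero element and $b$ is determined, so $N(s) = q-1$.
\item If $s = 0$, then at least one of $a, b$ vanishes, so $N(0) = 2q-1$.
\end{itemize}

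Next I count the frozen pairs by type. The pairs with $y_1 + y_2 = 0$ are those with $y_2 = -y_1$, giving $q-1$ of them; this holds in characteristic $2$ as well, where $y_2 = y_1$. The remaining $(q-1)^2 - (q-1) = (q-1)(q-2)$ pairs have $s \neq 0$. Summing over fibres gives
\[ \#\mathcal{A}(\Sigma_{0,1,2}^1;\mathbb{F}_q) = (q-1)(q-2)(q-1)^2 + (q-1)(2q-1)^2 = (q-1)\big( (q-2)(q-1)^2 + (2q-1)^2 \big). \]
Expanding the bracket gives $q^3 - 4q^2 + 5q - 2 + 4q^2 - 4q + 1 = q^3 + q - 1$, so the count is $(q-1)(q^3+q-1) = q^4 - q^3 + q^2 - 2q + 1$, as claimed.

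The computation itself is routine. The only step needing care is the input that the displayed presentation really is the whole cluster algebra, so that points are exactly the solutions of those equations. This requires three things: the cluster variables are exactly the two plain radii, the two notched radii and the two boundary arcs; there are no further relations; and there is no hidden torsion. I would justify it by noting that the quiver in Figure \ref{fig: punctured digon quiver} has no arrows between mutable vertices. Each mutable variable therefore mutates only once, giving the exchange relations $x_i x_i^\prime = y_1 + y_2$. The resulting acyclic (isolated-vertex) presentation is exactly the one given, by the Berenstein--Fomin--Zelevinsky presentation for acyclic seeds. I would take this as stated in the paper.
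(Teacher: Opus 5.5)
Your proof is correct and follows the paper's argument: the paper also counts solutions of the given presentation directly, splitting on whether $y_1+y_2=0$. In that case its nine listed zero-patterns sum to exactly your $(q-1)(2q-1)^2$, the case $y_1+y_2\neq 0$ gives the same $(q-2)(q-1)^3$, and your factorization of each fibre as $N(s)^2$ is simply a tidier bookkeeping of the same count.
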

\begin{proof}
    We can examine the points directly. 
    \begin{itemize}
        \item If $y_1 + y_2 = 0$, then there are points of the following types:
        \begin{itemize}
            \item $(q-1)$ points of the form $(0,0,0,0,a,-a)$ for some $a \in \mathbb{F}_q^\times$
            \item $(q-1)^2$ points of each of the forms $(b,0,0,0,a,-a)$, $(0,b,0,0,a,-a)$, $(0,0,b,0,a,-a)$, and $(0,0,0,b,a,-a)$ for some $a,b \in \mathbb{F}_q^\times$
            \item $(q-1)^3$ points of each of the forms $(b,0,c,0,a,-a)$, $(0,b,c,0,a,-a)$, $(b,0,0,c,a,-a)$, and $(0,b,0,c,a,-a)$ for some $a,b,c \in \mathbb{F}_q^\times$
        \end{itemize}
        \item If $y_1 + y_2 \neq 0$, then there are $q-1$ possible values for $y_1$, and after choosing a value for $y_1$, there are $q-2$ possible values for $y_2$. This gives us $(q-2)(q-1)^3$ points of the form $(c,\frac{a+b}{c}, d, \frac{a+b}{d}, a, b)$ for some $a,b,c,d \in \mathbb{F}_q^\times$ such that $a+b \neq 0$.
    \end{itemize}
    Adding these together, we get the following formula:
    \begin{align*}
        \#\mathcal{A}(\Sigma_{0,1,2}^1; \mathbb{F}_q) &= (q-1) + 4(q-1)^2 +4(q-1)^3 + (q-2)(q-1)^3\\
        &= q^4 - q^3 + q^2 -2q +1 \\
        &= (q-1)(q^3 +q - 1). \qedhere
    \end{align*}
\end{proof}

\subsection{Once-punctured surfaces}

Let us begin with $p=1$. After cutting along $\alpha$, we have the disconnected surface $\Sigma_{g,b,m}^{0} \bigsqcup \Sigma_{0,1,2}^1 /\!\sim$, which gives us the cluster algebra $A(\Sigma_{g,b,m}^{0}) \times A(\Sigma_{0,1,2}^1) / \langle \alpha^\prime - \alpha^{\prime \prime}\rangle$. Because we only choose the value for $\alpha$ once, the $(q-1)$ factor from the digon cancels, and we have 
\[ \#\mathcal{A}(\Sigma_{g,b,m}^{0} \bigsqcup \Sigma_{0,1,2}^1 /\!\sim; \mathbb{F}_q) = (q+1)^{2g+b-2} (q-1)^{2g+m+b-2} \big(q^{2g+m+b-2} + (-1)^{m}\big) (q^3 + q - 1). \] 

Cutting along $\beta$ gives us $\Sigma_{g,b,m+2}^0$, and we have 
\[ \#\mathcal{A}(\Sigma_{g,b,m+2}^{0} /\!\sim; \mathbb{F}_q) = (q+1)^{2g+b-2} (q-1)^{2g+m+b-1} \big(q^{2g+m+b} + (-1)^{m}\big) \] points that agree on the two copies of $\beta$. 

We now need to determine the size of the intersection of these two varieties. For any point in the intersection, both $\alpha$ and $\beta$ must be nonzero. From the proof of Proposition \ref{prop: digon count}, we see that any given radius of the digon takes a non-zero value on 
\[ (q-1)^2 + 2(q-1)^3 + (q-2)(q-1)^3 = (q-1)^2(q^2 - q + 1) \]
of the points in $\mathcal{A} (\Sigma_{0,1,2}^1; \mathbb{F}_q)$. Dividing by $(q-1)$ (because the copies of $\alpha$ must have the same value) and taking the product with the unpunctured surface, we find that $\beta$ is nonzero on 
\[ (q+1)^{2g+b-2} (q-1)^{2g+m+b-1} \big(q^{2g+m+b-2} + (-1)^{m}\big) (q^2 - q + 1) \] 
points of $ \#\mathcal{A}(\Sigma_{g,b,m}^{0} \bigsqcup \Sigma_{0,1,2}^1 /\!\sim; \mathbb{F}_q) $.

Now, considering the surface created by cutting along $\beta$, the points that are non-zero on $\alpha$ correspond to a cutting along $\alpha$, giving us $(\Sigma_{g,b,m}^0 \bigsqcup (\Sigma_{0,1,4}^0/\!\sim))/\!\sim$. The quadrilateral has two copies of $\beta$ as adjacent sides, and the value of one of the other sides is determined as it is a copy of $\alpha$, so we will be dividing by $(q-1)^2$. Taking the product, we have 
\begin{align*}
 \#\mathcal{A}((\Sigma_{g,b,m}^0 &\bigsqcup (\Sigma_{0,1,4}^0/\!\sim))/\!\sim; \mathbb{F}_q) \\
 &= (q+1)^{2g+b-2} (q-1)^{2g+m+b-2} \big(q^{2g+m+b-2} + (-1)^{m}\big) \frac{(q-1)^3}{(q-1)^2} \bigg( \frac{q^3 + 1}{q+1} \bigg) \\
 &= (q+1)^{2g+b-2} (q-1)^{2g+m+b-1} \big(q^{2g+m+b-2} + (-1)^{m}\big)  \big( q^2 - q + 1 \big).
\end{align*}
This is the same number of points with both $\alpha$ and $\beta$ non-zero as in the other cutting, so we have verified that this is the number of points in the intersection of the cut varieties.

Adding the number of points in the two cut varieties and subtracting the points from the intersection, we get 
\begin{align*}
    \#\mathcal{A}(\Sigma_{g,b,m}^1; \mathbb{F}_q) &= (q+1)^{2g+b-2}(q-1)^{2g+m+b-2}\big(q^{2g+m+b-2} + (-1)^{m}\big)(q^3+q-1) \\
    &\phantom{00}+ (q+1)^{2g+b-2}(q-1)^{2g+m+b-1}\big(q^{2g+m+b} + (-1)^{m}\big) \\
    &\phantom{00}- (q+1)^{2g+b-2}(q-1)^{2g+m+b-1}\big(q^{2g+m+b-2} + (-1)^{m}\big)(q^2-q+1).
\end{align*}
It will prove advantageous to treat the first and third terms as multiples of $\#\mathcal{A}(\Sigma_{g,b,m}^0; \mathbb{F}_q)$:
\begin{align*}
    \#\mathcal{A}(\Sigma_{g,b,m}^1; \mathbb{F}_q) &= (q^3+q-1) \#\mathcal{A}(\Sigma_{g,b,m}^0; \mathbb{F}_q) + \#\mathcal{A}(\Sigma_{g,b,m+2}^0/\!\sim; \mathbb{F}_q) \\ &\quad\quad - (q-1)(q^2-q+1) \#\mathcal{A}(\Sigma_{g,b,m}^0; \mathbb{F}_q) \\
    &= (2q^2-q) \#\mathcal{A}(\Sigma_{g,b,m}^0; \mathbb{F}_q) + \#\mathcal{A}(\Sigma_{g,b,m+2}^0/\!\sim; \mathbb{F}_q) \\
    &= (q+1)^{2g+b-2} (q-1)^{2g+m+b-2} \big((2q^2-q)\big(q^{2g+m+b-2} + (-1)^{m}\big) \\ &\quad\quad  + (q-1)\big(q^{2g+m+b} + (-1)^{m}\big) \big) \\
    &= (q+1)^{2g+b-2} (q-1)^{2g+m+b-2} \big(q^{2g+m+b+1} + q^{2g+m+b}  \\ &\quad\quad  - q^{2g+m+b-1} + (2q^2-1)(-1)^{m} \big) \\
    &= (q+1)^{2g+b-2} (q-1)^{2g+m+b-2} \big((q^2 + q - 1)q^{2g+m+b-1} + (2q^2-1)(-1)^{m} \big).
\end{align*}

\subsection{Multiply-punctured surfaces}

For a surface with $p>1$ punctures, it can be cut in exactly the same manner. This gives us a recursive formula that holds for all punctured surfaces:
\begin{align*}
    \#\mathcal{A}(\Sigma_{g,b,m}^p; \mathbb{F}_q) &= (q^3+q-1) \#\mathcal{A}(\Sigma_{g,b,m}^{p-1}; \mathbb{F}_q) + \#\mathcal{A}(\Sigma_{g,b,m+2}^{p-1}/\!\sim; \mathbb{F}_q) \\ &\quad\quad - (q-1)(q^2-q+1) \#\mathcal{A}(\Sigma_{g,b,m}^{p-1}; \mathbb{F}_q) \\
    &= (2q^2-q) \#\mathcal{A}(\Sigma_{g,b,m}^{p-1}; \mathbb{F}_q) + \#\mathcal{A}(\Sigma_{g,b,m+2}^{p-1}/\!\sim; \mathbb{F}_q)
\end{align*}

Now, let us calculate the formula for $p=2$:
 \begin{align*}
    \#\mathcal{A}(&\Sigma_{g,b,m}^p; \mathbb{F}_q) = (2q^2-q) \#\mathcal{A}(\Sigma_{g,b,m}^{p-1}; \mathbb{F}_q) + \#\mathcal{A}(\Sigma_{g,b,m+2}^{p-1}/\!\sim; \mathbb{F}_q) \\
    &= (2q^2-q)(q+1)^{2g+b-2} (q-1)^{2g+m+b-2} \big((q^2 + q - 1)q^{2g+m+b-1} + (2q^2-1)(-1)^{m} \big) \\
    &\quad\quad+ (q+1)^{2g+b-2} (q-1)^{2g+m+b-1} \big((q^2 + q - 1)q^{2g+m+b+1} + (2q^2-1)(-1)^{m} \big) \\
    &= (q+1)^{2g+b-2} (q-1)^{2g+m+b-2} \big(\big(2q^5+q^4-3q^3+q^2\big)q^{2g+m+b-2}  \\ &\quad\quad  + (2q^2-q)(2q^2-1)(-1)^m + \big(q^6-2q^4+q^3\big)q^{2g+m+b-2} + (q-1)(2q^2-1)(-1)^m \big) \\
    &= (q+1)^{2g+b-2} (q-1)^{2g+m+b-2} \big(\big(q^6 + 2q^5-q^4-2q^3+q^2\big)q^{2g+m+b-2}  \\ &\quad\quad + (2q^2-1)^2(-1)^m  \big) \\
    &= (q+1)^{2g+b-2} (q-1)^{2g+m+b-2} \big((q^2+q-1)^2 q^{2g+m+b} + (2q^2-1)^2(-1)^m \big). 
\end{align*}

Now, we show that the formula extends to surfaces with any number of punctures.
\begin{prop}\label{prop: punctured count}
    Let $\Sigma_{g,b,m}^p$ be a genus $g$ surface with $b>1$ boundary components, $m\geq b$ marked points, and $p\geq 0$ punctures, where one of the following holds:
    \begin{itemize}
        \item $g=0$, $p\geq 0$, $b=1$, and $m\geq 4$, or
        \item $g=0$, $p=0$, $b\geq 2$, and each boundary component contains at least one marked point,
        \item $g \geq 1$, $b \geq 1$, $m\geq 2$, each boundary component contains at least one marked point, and at least two marked points lie on the same boundary component, or
        \item $p \geq 1$, $b \geq 1$, $m\geq 2$, each boundary component contains at least one marked point, and at least two marked points lie on the same boundary component. 
    \end{itemize}
    The corresponding cluster variety over $\mathbb{F}_q$ contains 
    \[ \#\mathcal{A}(\Sigma_{g,b,m}^p; \mathbb{F}_q) = (q+1)^{2g+b-2} (q-1)^{2g+m+b-2} \big( q^{2g+m+b+p-2} (q^2+q-1)^p + (-1)^m (2q^2-1)^p \big) \]
    points.
\end{prop}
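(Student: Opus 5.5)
The plan is induction on the number of punctures $p$, using the recursion derived just above,
\[ \#\mathcal{A}(\Sigma_{g,b,m}^p; \mathbb{F}_q) = (2q^2-q)\, \#\mathcal{A}(\Sigma_{g,b,m}^{p-1}; \mathbb{F}_q) + \#\mathcal{A}(\Sigma_{g,b,m+2}^{p-1}/\!\sim; \mathbb{F}_q). \]
It comes from cutting along the covering pair of Figure \ref{fig: covering pair punctured}. That covering pair exists whenever some boundary component carries at least two marked points, which is exactly the hypothesis in the last case of the statement.

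\textbf{Base case.} For $p=0$ the claimed formula reduces to Proposition \ref{prop:genus g points}, since $(q^2+q-1)^0=(2q^2-1)^0=1$. This covers the unpunctured cases in the list. For the case $g=0$, $b=1$, $p\geq 1$, $m\geq 4$, the polygon $\Delta_m$ with $m\geq 4$ has two marked points on its one boundary component, so the same cutting applies and the induction below handles it.

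\textbf{Inductive step.} Assume the formula for $p-1$ punctures and every admissible $m$. Then
\[ \#\mathcal{A}(\Sigma_{g,b,m+2}^{p-1}/\!\sim;\mathbb{F}_q) = \tfrac{1}{q-1}\, \#\mathcal{A}(\Sigma_{g,b,m+2}^{p-1};\mathbb{F}_q), \]
because the two copies of $\beta$ must agree, which removes one $\mathbb{F}_q^\times$ choice. This division is exactly what the paper does in the $p=1$ computation.

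Factor out $C=(q+1)^{2g+b-2}(q-1)^{2g+m+b-2}$ and write $N=2g+m+b-2$. Inserting the inductive formulas, the bracket becomes
\[ (2q^2-q)\big(q^{N+p-1}(q^2+q-1)^{p-1} + (-1)^m(2q^2-1)^{p-1}\big) + (q-1)\big(q^{N+p+1}(q^2+q-1)^{p-1} + (-1)^m(2q^2-1)^{p-1}\big). \]
Two terms exchange under $m\mapsto m+2$: $(-1)^{m+2}=(-1)^m$, and the exponent $N+p-1$ grows by $2$.

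Now collect the two kinds of terms.

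The sign terms give $(-1)^m(2q^2-1)^{p-1}\big((2q^2-q)+(q-1)\big)=(-1)^m(2q^2-1)^p$.

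The power terms give $q^{N+p-1}(q^2+q-1)^{p-1}\big((2q^2-q)+(q-1)q^2\big)$. The inner factor is $q^3+q^2-q=q(q^2+q-1)$, so the power terms equal $q^{N+p}(q^2+q-1)^p$.

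This is exactly the claimed formula, and it matches the $p=1$ and $p=2$ computations already done.

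\textbf{Main obstacle.} The algebra is routine. The real work is justifying that the recursion holds verbatim for every $p\geq 1$, so that the cutting argument of the once-punctured case carries over unchanged. Three things must be checked.

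First, the intersection term must again be $(q-1)(q^2-q+1)\,\#\mathcal{A}(\Sigma^{p-1}_{g,b,m})$. This needs the radius-count from Proposition \ref{prop: digon count} to be independent of the rest of the surface. It is, because cutting along $\alpha$ gives a product with a single shared frozen value.

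Second, the cut surfaces must stay within the hypotheses, so that induction applies:

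\begin{itemize}
\item $\Sigma_{g,b,m}^{p-1}$ keeps a boundary component with at least two marked points.
\item $\Sigma_{g,b,m+2}^{p-1}$ gains two marked points on the boundary component where the digon was cut.
\end{itemize}

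Third, we need the identification $\#(\cdot/\!\sim)=\#(\cdot)/(q-1)$. I would argue it by noting that the count is a product of an $\mathbb{F}_q^\times$-torus acting freely by scaling the boundary-value coordinate of one copy of $\beta$, as in the annulus case.
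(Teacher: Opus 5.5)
Your proposal is correct and is essentially the paper's proof. It runs induction on $p$ through the recursion $\#\mathcal{A}(\Sigma^p_{g,b,m};\mathbb{F}_q)=(2q^2-q)\,\#\mathcal{A}(\Sigma^{p-1}_{g,b,m};\mathbb{F}_q)+\#\mathcal{A}(\Sigma^{p-1}_{g,b,m+2}/\!\sim;\mathbb{F}_q)$, divides the $(m+2)$-count by $q-1$, and collects the sign and power terms exactly as the paper does, while your extra checks justify steps the paper only asserts. For the division by $q-1$, the torus action to use rescales every arc at an endpoint of one copy of $\beta$ not shared by the other; rescaling a single coordinate is not a ring automorphism in general.

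The only loose end arises if $b=1$ is allowed in the last case, namely the once-punctured digon $\Sigma^1_{0,1,2}$. There $\alpha$ would be isotopic to a boundary arc, so your recursion from $p=0$ does not reach it; it must be taken directly from Proposition~\ref{prop: digon count}, which does agree with the formula.
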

\begin{proof}
    The base cases are above. We need only do induction on the number of punctures. Suppose there is an integer $k$ such that the formula holds for all $0 \leq p \leq k-1$. 
    
    \begin{align*}
        \#\mathcal{A}(&\Sigma_{g,b,m}^p; \mathbb{F}_q) = (2q^2-q) \#\mathcal{A}(\Sigma_{g,b,m}^{p-1}; \mathbb{F}_q) + \#\mathcal{A}(\Sigma_{g,b,m+2}^{p-1}/\!\sim; \mathbb{F}_q) \\
        &= (2q^2 - q)(q+1)^{2g+b-2} (q-1)^{2g+m+b-2} \big( q^{2g+m+b+p-3} (q^2+q-1)^{p-1}  \\ &\quad\quad + (-1)^m (2q^2-1)^{p-1} \big) + (q+1)^{2g+b-2} (q-1)^{2g+m+b-1} \big( q^{2g+m+b+p-1} (q^2+q-1)^{p-1}  \\ &\quad\quad  + (-1)^{m+2} (2q^2-1)^{p-1} \big) \\
        &= (q+1)^{2g+b-2} (q-1)^{2g+m+b-2} \big( (2q^2 - q)\big(q^{2g+m+b+p-3} (q^2+q-1)^{p-1}  \\ &\quad\quad  + (-1)^m (2q^2-1)^{p-1}\big) + (q-1) \big(q^{2g+m+b+p-1} (q^2+q-1)^{p-1}  \\ &\quad\quad   + (-1)^m (2q^2-1)^{p-1} \big)\big) \\
        &= (q+1)^{2g+b-2} (q-1)^{2g+m+b-2} \big( 2q^{2g+m+b+p-1} (q^2+q-1)^{p-1} + 2q^2(-1)^m (2q^2-1)^{p-1}  \\ &\quad\quad - q^{2g+m+b+p-2} (q^2+q-1)^{p-1} -q (-1)^m (2q^2-1)^{p-1} \\ &\quad\quad + q^{2g+m+b+p} (q^2+q-1)^{p-1} + q(-1)^m (2q^2-1)^{p-1} \\ &\quad\quad  - q^{2g+m+b+p-1} (q^2+q-1)^{p-1} - (-1)^m (2q^2-1)^{p-1} \big) \\
        &= (q+1)^{2g+b-2} (q-1)^{2g+m+b-2} \big( q^{2g+m+b+p-2} (q^2+q-1)^{p} + (-1)^m (2q^2-1)^{p}\big).
    \end{align*}
\end{proof}

\subsection{Surfaces with one marked point on each boundary component}

\begin{figure}[htb]
	\centering
	\begin{tikzpicture}[scale=0.9,baseline={(0,0)}]
		\node[dot] (m1) at (-4,0) {};
		\node[dot] (m2) at (4,0) {};
		\draw[thick,out=270,in=0] (m1) to (-4.3,-1);
		\draw[thick,out=180,in=270] (-4.3,-1) to (-4.6,0);
		\draw[thick,out=90,in=180] (-4.6,0) to (-4.3,1) to [out=0,in=90] (m1) {};
		\draw[thick,out=270,in=180] (m2) to (4.3,-1);
		\draw[thick,out=0,in=270] (4.3,-1) to (4.6,0);
		\draw[thick,out=90,in=0] (4.6,0) to (4.3,1) to [out=180,in=90] (m2) {};
		\draw[thick] (-4.3,-1) to [out=15,in=180] (0,-2.5) to [out=0,in=165] (4.3,-1) {};
		\draw[thick,out=345,in=225] (-4.3,1) to (-3,2);
		\draw[thick,out=165,in=315] (4.3,1) to (3,2);
		\draw[dashed,out=330,in=180] (-3,2) to (0,1.5) to [out=0,in=210] (3,2); 
		\draw[dashed,out=30,in=180] (-3,2) to (0,2.5) to [out=0,in=150] (3,2);
		\draw[thick,bend right] (m1) to (m2) {};
		\node at (-1.0,-0.9) {$\alpha$};
		\node[dark purple] at (0.3,-1.9) {$\beta$};
		\draw[thick,dark purple,out=225,in=30] (m2) to (-1.2,-2.2) {};
		\draw[dashed,dark purple,out=120,in=300] (-1.2,-2.2) to (-3.75,0.95) {};
		\draw[thick,dark purple,out=270,in=60] (-3.85,1.06) to (m1) {};
		\draw[thick] (-0.75,0.3) to [out=345,in=195] (0.75,0.3) to [out=150,in=30] (-0.75,0.3) {}; 
		\draw[thick] (-0.75,0.3) to [out=165,in=345] (-0.9,0.35) {};
		\draw[thick] (0.75,0.3) to [out=15,in=195] (0.9,0.35) {};
	\end{tikzpicture}
	\caption{A covering pair $(\alpha,\beta)$ in a marked surface with only one marked point on each boundary component.}
	\label{fig: one marked point per boundary}
\end{figure}

In the cases of $g\geq 1$ and $p\geq 1$ discussed above, the covering pair was chosen under the assumption that one boundary component contained at least two marked points. In this section, we consider the case where all boundary components contain a single marked point. 

Observe that the pair $(\alpha, \beta)$ in Figure \ref{fig: one marked point per boundary} is the same configuration as in Figure \ref{fig: covering pair}, so it is a covering pair. Cutting $\Sigma_{g,b,m}^p$ along this pair will follow the same pattern as in Section \ref{section: unpunctured}, producing two copies of $\mathcal{A}(\Sigma_{g,b-1,m+2}^p/\sim)$, and the overlap of these two varieties will correspond to $q-1$ copies of $\mathcal{A}(\Sigma_{g,b-1,m+1}^p)$. 

This gives us 
\begin{align*}
    \#\mathcal{A}(&\Sigma_{g,b,m}^p; \mathbb{F}_q) = 2\#\mathcal{A}(\Sigma_{g,b-1,m+2}^p/\sim; \mathbb{F}_q) - (q-1)\#\mathcal{A}(\Sigma_{g,b-1,m+1}^p; \mathbb{F}_q) \\
    &= 2(q+1)^{2g+b-3} (q-1)^{2g+m+b-2} \big( q^{2g+m+b+p-1} (q^2+q-1)^p + (-1)^{m+2} (2q^2-1)^p \big) \\ &\quad\quad - (q+1)^{2g+b-3} (q-1)^{2g+m+b-1} \big( q^{2g+m+b+p-2} (q^2+q-1)^p + (-1)^{m+1} (2q^2-1)^p \big) \\
    &= (q+1)^{2g+b-3} (q-1)^{2g+m+b-2} \big( 2q^{2g+m+b+p-1} (q^2+q-1)^p + 2(-1)^{m+2} (2q^2-1)^p  \\ &\quad\quad  - (q-1)(q^{2g+m+b+p-2} (q^2+q-1)^p + (-1)^{m+1} (2q^2-1)^p) \big) \\
    &= (q+1)^{2g+b-3} (q-1)^{2g+m+b-2} \big( 2q^{2g+m+b+p-1} (q^2+q-1)^p + 2(-1)^{m+2} (2q^2-1)^p  \\ &\quad\quad  - q^{2g+m+b+p-1} (q^2+q-1)^p - q(-1)^{m+1} (2q^2-1)^p + q^{2g+m+b+p-2} (q^2+q-1)^p  \\ &\quad\quad + (-1)^{m+1} (2q^2-1)^p \big) \\
    &= (q+1)^{2g+b-3} (q-1)^{2g+m+b-2} \big( q^{2g+m+b+p-1} (q^2+q-1)^p + (-1)^{m+2} (2q^2-1)^p  \\ &\quad\quad  + q(-1)^{m+2} (2q^2-1)^p + q^{2g+m+b+p-2} (q^2+q-1)^p \big) \\
    &= (q+1)^{2g+b-2} (q-1)^{2g+m+b-2} \big( q^{2g+m+b+p-2} (q^2+q-1)^p + (-1)^{m} (2q^2-1)^p \big).    
\end{align*}
That is, we get the same formula as in Proposition \ref{prop: punctured count}, which implies that the order of cuts commutes. We can now incorporate this into the statement of the theorem.

\begin{thm}\label{thm: punctured count full}
    Let $\Sigma_{g,b,m}^p$ be a connected, triangulable surface of genus $g \geq 0$ with $b>1$ boundary components, $m\geq 2$ marked points, and $p\geq 0$ punctures. 
	The corresponding cluster variety over $\mathbb{F}_q$ contains 
	\[ \#\mathcal{A}(\Sigma_{g,b,m}^p; \mathbb{F}_q) = (q+1)^{2g+b-2} (q-1)^{2g+m+b-2} \big( q^{2g+m+b+p-2} (q^2+q-1)^p + (-1)^m (2q^2-1)^p \big) \]
	points.
\end{thm}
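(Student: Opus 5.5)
The plan is to assemble the theorem from Proposition \ref{prop: punctured count} together with the computation immediately preceding the statement, which handles surfaces having exactly one marked point on every boundary component. The argument is a case split on the distribution of marked points among the boundary components, followed by an induction on $b$ in the one remaining case.

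\textbf{Case 1: some boundary component carries at least two marked points.} Then one of the hypotheses of Proposition \ref{prop: punctured count} applies:
\begin{itemize}
\item if $g\geq 1$ or $p\geq 1$, the third or fourth bullet applies;
\item if $g=p=0$, the second bullet applies.
\end{itemize}
In every subcase the count is exactly the displayed formula, so nothing further is needed.

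\textbf{Case 2: every boundary component carries exactly one marked point,} so $m=b\geq 2$. Here I induct on $b$ using the covering pair $(\alpha,\beta)$ of Figure \ref{fig: one marked point per boundary}. Cutting along $\alpha$ or along $\beta$ merges two boundary components into one component carrying three marked points. This yields the recursion
\[ \#\mathcal{A}(\Sigma_{g,b,m}^p;\mathbb{F}_q) = 2\#\mathcal{A}(\Sigma_{g,b-1,m+2}^p/\!\sim;\mathbb{F}_q) - (q-1)\#\mathcal{A}(\Sigma_{g,b-1,m+1}^p;\mathbb{F}_q). \]
Both surfaces on the right have a boundary component with at least two marked points, so Case 1 evaluates them. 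The "$/\!\sim$'' version is the Case 1 count divided by one factor of $(q-1)$. The algebraic simplification displayed just before the theorem then gives the claimed formula.

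Two points need checking here:
\begin{itemize}
\item \emph{Triangulability of the cut surfaces.} The number of arcs $n(\Sigma)=6g+3b+3p+m-6$ changes by $-3+2=-1$ or $-3+1=-2$ under the cut. So I must confirm the cut surfaces remain triangulable, or handle small cases such as $g=p=0$, $b=2$, $m=2$ directly. That case is the annulus, already covered by Proposition \ref{prop:annulus points}, where the cut gives the quadrilateral and triangle, both of which satisfy Proposition \ref{prop:polygon points} (including the $m=3$ remark).
\item \emph{Validity of the intersection count.} The overlap of the two charts must really be $q-1$ copies of the variety of $\Sigma_{g,b-1,m+1}^p$. This follows by the same argument as in Example \ref{ex: annulus cutting}, since the local picture around $\alpha,\beta$ is the unpunctured triangle of Lemma \ref{lemma: Muller LA 10.1} and its covering-pair property.
\end{itemize}

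The main obstacle is making this inclusion–exclusion rigorous in full generality. Specifically, the two localizations must cover the variety (Muller's Corollary 5.4), and their intersection must be identified with the localization at both $\alpha$ and $\beta$. The latter is realized by cutting along both arcs (Beyer's Proposition 5.3.9), where the second cut splits off a triangle whose extra boundary arc contributes the factor $q-1$. Once this is accepted as in the annulus case, the rest is the routine algebra already displayed.
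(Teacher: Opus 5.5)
Your proposal is correct and follows the paper's own argument. The paper likewise combines Proposition \ref{prop: punctured count}, for when some boundary component carries at least two marked points, with a cut along the covering pair of Figure \ref{fig: one marked point per boundary} when every boundary component carries exactly one marked point, obtaining $2\#\mathcal{A}(\Sigma_{g,b-1,m+2}^p/\!\sim;\mathbb{F}_q)-(q-1)\#\mathcal{A}(\Sigma_{g,b-1,m+1}^p;\mathbb{F}_q)$ and the same simplification.

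Two small corrections, neither a gap. First, after cutting along a single arc the merged boundary component carries four marked points (two copies of each endpoint); it carries three only on the overlap surface. Second, no induction on $b$ is needed, since one cut already lands in your first case. When $b=2$, though, the cut surfaces have one boundary component, so you must cite the $b=1$ bullets of Proposition \ref{prop: punctured count} (or the $m=3$ polygon remark) rather than your Case 1, which you stated under $b>1$.
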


Before moving on, we want to highlight a recurrence relation that appears when $q=2$, which is a generalization of a lemma of P{\'e}rez Melesio and Simental \cite[Lemma 1]{PMS26} to cluster algebras of surface type. 

\begin{lemma}\label{lemma: recurrence}
	Under the assumptions of Theorem \ref{thm: punctured count full},  
	\[ \#\mathcal{A}(\Sigma_{g,b,m}^p; \mathbb{F}_2) = \#\mathcal{A}(\Sigma_{g,b,m-1}^p; \mathbb{F}_2) + 2 \#\mathcal{A}(\Sigma_{g,b,m-2}^p; \mathbb{F}_2). \]
\end{lemma}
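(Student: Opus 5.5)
Substitute $q=2$ into Theorem \ref{thm: punctured count full} and reduce the claim to the Jacobsthal recurrence. At $q=2$ we have $q+1=3$, $q-1=1$, $q^2+q-1=5$ and $2q^2-1=7$, so the formula becomes
\[ \#\mathcal{A}(\Sigma_{g,b,m}^p; \mathbb{F}_2) = 3^{2g+b-2}\big( 2^{2g+m+b+p-2}\, 5^p + (-1)^m 7^p \big). \]
The factor $(q-1)^{2g+m+b-2}$ disappears, so the only dependence on $m$ is inside the parenthesis. The prefactor $3^{2g+b-2}$ is common to all three terms and can be divided out. Writing $C = 2^{2g+b+p-2}5^p$ and $D = 7^p$, both independent of $m$, the quantity to study is
\[ a_m = C\, 2^m + D\,(-1)^m. \]

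The recurrence $a_m = a_{m-1} + 2a_{m-2}$ has characteristic polynomial $x^2 - x - 2 = (x-2)(x+1)$. Any linear combination of $2^m$ and $(-1)^m$ therefore satisfies it, and I would check this directly on each part:
\[ 2^{m-1} + 2\cdot 2^{m-2} = 2^m, \qquad (-1)^{m-1} + 2(-1)^{m-2} = (-1)^{m-2}\big(-1+2\big) = (-1)^m. \]
Multiplying by $C$ and $D$, adding, and restoring the factor $3^{2g+b-2}$ gives the identity.

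The main point needing care is whether the statement is meaningful. The surfaces $\Sigma_{g,b,m-1}^p$ and $\Sigma_{g,b,m-2}^p$ must also satisfy the hypotheses of Theorem \ref{thm: punctured count full}, in particular triangulability and at least one marked point per boundary component. I would state that the identity is asserted whenever all three surfaces fall under the theorem, for instance $m-2\ge \max(b,2)$. I would also note that, as an identity between the closed-form expressions, it holds for every integer $m$.
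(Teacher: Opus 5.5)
Your proof is correct and takes the same approach as the paper: the paper also specializes Theorem~\ref{thm: punctured count full} to $q=2$ to get $3^{2g+b-2}\big(2^{2g+m+b+p-2}5^p + (-1)^m 7^p\big)$ and checks the Jacobsthal recurrence by direct algebra, and your split into the $2^m$ and $(-1)^m$ parts is just a cleaner arrangement of that same computation. Your caveat that the surfaces with $m-1$ and $m-2$ marked points must also satisfy the hypotheses is sensible; the paper leaves this implicit.
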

\begin{proof}
	Over $\mathbb{F}_2$, we have  
	\[ \#\mathcal{A}(\Sigma_{g,b,m}^p; \mathbb{F}_2) = 3^{2g+b-2} \big( 2^{2g+m+b+p-2} 5^p + (-1)^m 7^p \big). \]
	Using this, we get 
	\begin{align*}
		\#\mathcal{A}(\Sigma_{g,b,m-1}^p; \mathbb{F}_2) + 2 \#\mathcal{A}(\Sigma_{g,b,m-2}^p; \mathbb{F}_2) &= 3^{2g+b-2} \big( 2^{2g+m+b+p-3} 5^p + (-1)^{m-1} 7^p \big) \\ &\quad\quad + 2\big(3^{2g+b-2} \big( 2^{2g+m+b+p-4} 5^p + (-1)^{m-2} 7^p \big)\big) \\
		&= 3^{2g+b-2}\big( 2^{2g+m+b+p-3} 5^p + (-1)^{m-1} 7^p  \\ &\quad\quad  + 2\big(2^{2g+m+b+p-4} 5^p + (-1)^{m-2} 7^p\big) \big) \\
		&= 3^{2g+b-2}\big( 2^{2g+m+b+p-2} 5^p + (-1)^m 7^p \big) \\
		&= \#\mathcal{A}(\Sigma_{g,b,m}^p; \mathbb{F}_2).
	\end{align*}
	
\end{proof}

\begin{rem}
	By Lemma \ref{lemma: recurrence}, these point counts satisfy $ a_m = a_{m-1} + 2a_{m-2} $, the recurrence relation  of the \emph{Jacobsthal sequence} (Sequence \seqnum{A001045} in the OEIS \cite{oeis}).
\end{rem}

\section{Counting algebraic tori}\label{section: tori}

Given a polygon $\Delta_{m+2}$ with $m+2$ sides, the number of triangulations is the $m$-th \emph{Catalan number} (Sequence \seqnum{A000108} in the OEIS \cite{oeis}),  \[ C_m = \frac{1}{m+1}\binom{2m}{m} = \frac{(2m)!}{m!(m+1)!}. \]
The number of triangulations is equal to the number of clusters in the corresponding cluster algebra. For each cluster $\mathbf{x}$, there is an inclusion of a corresponding algebraic torus $\mathbb{T}_\mathbf{x}$ into the cluster variety. The union of these \emph{cluster tori} over all triangulations of the surface is called the \emph{cluster manifold} $\mathcal{M}$. A point of the complement of the cluster manifold $\mathcal{A}(\Sigma_{g,b,m}^p ; \Bbbk) \setminus \mathcal{M}$ is called a \emph{deep point}.

For most of the surfaces we have considered, the number of triangulations is infinite, as are the numbers of cluster variables and clusters. However, because these cluster algebras are locally acyclic, the number of points in the corresponding cluster variety over $\mathbb{F}_q$ is finite. As a result, the cluster manifold can be covered by finitely many algebraic tori. 

\subsection{A naive upper bound using Catalan numbers}

Let us denote the number of triangulations arising from the recursive cutting discussed in this note as $\tau (\Sigma)$. As already noted, $\tau (\Delta_{m}) = C_{m-2}$. 

Now, consider the annulus $\Sigma_{0,2,m}$. As discussed, the cluster variety is covered by two polygons, each with two additional marked points, so $ \tau (\Sigma_{0,2,m}) = 2C_m .$ 
With each additional boundary component, we will double the number of polygons necessary to cover the variety, and those polygons will have two additional marked points. Thus, we have \[ \tau (\Sigma_{0,b,m}) = 2^{b-1} C_{m + 2(b-1) - 2} = 2^{b-1} C_{m + 2b - 4} .\]

Next, we consider a surface with genus 1, $\Sigma_{1,b,m}$. The surface is covered by two genus 0 surfaces with an additional boundary component and two additional marked points, giving us \[ \tau (\Sigma_{1,b,m}) = 2 \tau (\Sigma_{0,b+1,m+2}) = 2^{b+1} C_{m + 2b} .\]
A surface with genus 2 would then be covered by two genus 1 surfaces with an additional boundary component and two additional marked points, so we have \[ \tau (\Sigma_{2,b,m}) = 2 \tau (\Sigma_{1,b+1,m+2}) = 2^{b+3} C_{m + 2b + 4} .\]
With each additional genus, four times as many polygons are required to cover the variety, each with four additional marked points, so we have 
\begin{equation}
\tau (\Sigma_{g,b,m}) = 2^{2g+b-1} C_{m + 4g + 2b - 4}.\tag{$\star$}
\end{equation}

\begin{rem}
    Note that each of the polygons has $4g + 2b + 2p + m - 2$ sides, which is the expected number of sides for a cutting along a polygonal dissection \cite[Proposition 7.3]{BM25}. The number of polygons $2^{2g+b-1}$ is the number of polygonal dissections up to congruence, as well as the number of elements of $H^1 (\Sigma ; \mathbb{Z}_2)$ with value 1 on every boundary arc \cite[Proposition 7.10]{BM25}, which are in bijection with several other combinatorial sets \cite[Theorem A.1]{BM25}. 
\end{rem}

Naively, we now attempt to extend this to punctured surfaces. First, consider a surface with a single puncture. From the cuttings of Section \ref{section: punctured surfaces}, we have 
\begin{align*}
\tau (\Sigma_{g,b,m}^{1}) &= \tau (\Sigma_{g,b,m+2}^{0}) + \big(\tau (\Sigma_{0,1,2}^{1})\big)\big(\tau (\Sigma_{g,b,m}^{0})\big) \\
&= \tau (\Sigma_{g,b,m+2}^{0}) + 4 \tau (\Sigma_{g,b,m}^{0})\\
&= 2^{2g+b-1} C_{m + 4g + 2b - 2} + 2^{2g+b+1} C_{m + 4g + 2b - 4} \\
&= 2^{2g+b-1} \big( C_{m + 4g + 2b - 2} + 4 C_{m + 4g + 2b - 4} \big).
\end{align*}

A surface with two punctures would likewise give us 
\begin{align*}
\tau (\Sigma_{g,b,m}^{2}) &= \tau (\Sigma_{g,b,m+2}^{1}) + \big(\tau (\Sigma_{0,1,2}^{1})\big)\big(\tau (\Sigma_{g,b,m}^{1})\big) \\
&= \tau (\Sigma_{g,b,m+2}^{1}) + 4\tau (\Sigma_{g,b,m}^{1}) \\
&= \tau (\Sigma_{g,b,m+4}^{0}) + 4\tau (\Sigma_{g,b,m+2}^{0}) + 4\tau (\Sigma_{g,b,m+2}^{0}) + 16\tau (\Sigma_{g,b,m}^{0}) \\
&= \tau (\Sigma_{g,b,m+4}^{0}) + 8\tau (\Sigma_{g,b,m+2}^{0})+ 16\tau (\Sigma_{g,b,m}^{0}).
\end{align*}

After working out a few more examples, it becomes clear that the coefficients follow the pattern of $(x+4)^p$, so we have 
\begin{align*}
    \tau (\Sigma_{g,b,m}^{p}) &= \sum_{j=0}^p \binom{p}{j} 4^{j} \tau (\Sigma_{g,b,m+2p-2j}^{0}) \\
    &= 2^{2g+b-1} \sum_{j=0}^p \binom{p}{j} 4^{j} C_{m + 4g + 2b +2p - 2j- 4}. \tag{$\star \star$}
\end{align*}

\begin{ex}\label{ex: D4 triangulations}
    Let $\Sigma_{0,1,4}^1$ be the once-punctured quadrilateral, which famously corresponds to the $D_4$ cluster algebra with four frozen boundary variables. The $D_4$ cluster algebra is known to have 50 seeds \cite[Proposition 5.9.1]{FWZChapters45}, but $\mathcal{A}(\Sigma_{0,1,4}^1; \mathbb{F}_2)$ contains just 29 points, so we know that it can be covered by fewer than 50 triangulations.
    \begin{align*}
        \tau (\Sigma_{0,1,4}^1) &= 2^{2g+b-1} \sum_{j=0}^p \binom{p}{j} 4^{j} C_{m + 4g + 2b +2p - 2j- 4} \\
        &= \sum_{j=0}^1 \binom{1}{j} 4^j C_{4-2j} \\
        &= C_4 + 4C_2 = 14 + 8 = 22.
    \end{align*}
    Observe that of the 29 points in $\mathcal{A}(\Sigma_{0,1,4}^1; \mathbb{F}_2)$, we know that seven are deep by work of the author \cite[Remark 7.3.2]{BeyerDissertation} and Castronovo, Gorsky, Simental, and Speyer \cite[Remark 6.11]{CGSS26}. 
    That is, just 22 points are contained in the cluster manifold. Over $\mathbb{F}_2$, each algebraic torus is exactly one point, so there are 22 algebraic tori.
\end{ex}

\begin{warn}
Unfortunately, Example \ref{ex: D4 triangulations} is rather misleading \textemdash the number of non-deep points over $\mathbb{F}_2$ is equal to $\tau (\Sigma)$ in very few cases. For a simple counterexample, $\tau(\Delta_6) = 14$, but $\mathcal{A}(\Delta_6; \mathbb{F}_2)$ contains just 11 points, 10 of which are not deep. In fact, as the complexity of $\Sigma$ increases, $\tau(\Sigma)$ grows much faster than the number of non-deep points over $\mathbb{F}_2$, so it quickly becomes a poor estimate of the number of algebraic tori necessary to cover the cluster manifold.
\end{warn}

\subsection{Cuts along separating arcs}

Suppose we make a cut that disconnects a surface, as in Figure \ref{fig: disconnect}. In this figure, we see that a point that is not deep in $\mathcal{A}(\Sigma;\mathbb{F}_2)$ may be deep after cutting. Choosing the two arcs marked with $1$ as a covering pair, we see that $\mathcal{A}(\Delta_6;\mathbb{F}_2)$ is covered by $\mathcal{A}(\Delta_3 \sqcup \Delta_5; \mathbb{F}_2)$ and $\mathcal{A}(\Delta_4 \sqcup \Delta_4; \mathbb{F}_2)$. 
If we try to add triangulations from the cut polygon, we see that $\tau(\Delta_3)\tau(\Delta_5) + \tau(\Delta_4)\tau(\Delta_4) = 9$, but there are 10 non-deep points in $\mathcal{A}(\Delta_6;\mathbb{F}_2)$. That is, we can recover all of the points of $\mathcal{A}(\Delta_6;\mathbb{F}_2)$ using fewer triangulations than the number of algebraic tori.

\begin{figure}[ht]
	\[
	\begin{tikzpicture}[scale=1.0,baseline=(current bounding box.center)]
		\path[fill=black!10] (-2,2) to (-1,2) to (0,0) to (-1,-2) to (-2,-2) to (-4,-2) to (-5,0) to (-4,2) to (-2,2);
		\node[dot] (1) at (-1,2) {};
		\node[dot] (2) at (0,0) {};
		\node[dot] (3) at (-1,-2) {};
		\node[dot] (4) at (-4,-2) {};
		\node[dot] (5) at (-5,0) {};
		\node[dot] (6) at (-4,2) {};
		\draw[thick] (1) to (2) (2) to (3) (3) to (4) (4) to (5) (5) to (6) (6) to (1);
		
		\draw[thick,dark red] (4) to node[dark red,right] {$0$} (6);
		\draw[thick,dark red] (5) to node[dark red,below] {$0$} (3);
		\draw[thick,dark red] (6) to node[dark red,above] {$0$} (2);
		\draw[thick] (6) to node[right] {$1$} (3);
		\draw[thick] (3) to node[right] {$1$} (1);
	\end{tikzpicture}
	\quad \longrightarrow \quad
	\begin{tikzpicture}[scale=1.0,baseline=(current bounding box.center)]
		\path[fill=black!10] (-3.5,2) to (-1,2) to (0,0) to (-1,-2) to (-3.5,2);
		\path[fill=black!10] (-5,0) to (-4,2) to (-1.5,-2) to (-4,-2) to (-5,0);
		\node[dot] (1) at (-1,2) {};
		\node[dot] (2) at (0,0) {};
		\node[dot] (3L) at (-1.5,-2) {};
		\node[dot] (3R) at (-1,-2) {};
		\node[dot] (4) at (-4,-2) {};
		\node[dot] (5) at (-5,0) {};
		\node[dot] (6L) at (-4,2) {};
		\node[dot] (6R) at (-3.5,2) {};
		\draw[thick] (1) to (2) (2) to (3R) (3R) to (6R) (6R) to (1);
		\draw[thick] (5) to (6L) (6L) to (3L) (3L) to (4) (4) to (5); 
		
		\draw[thick,dark red] (4) to node[dark red,right] {$0$} (6L);
		\draw[thick,dark red] (5) to node[dark red,below] {$0$} (3L);
		\draw[thick,dark red] (6R) to node[dark red,above] {$0$} (2);
		\draw[thick] (3R) to node[right] {$1$} (1);
	\end{tikzpicture}
	\]
	\caption{An example of a point over $\mathbb{F}_2$ that is not deep in the connected surface, but becomes deep after cutting.}
	\label{fig: disconnect}
\end{figure}

Taking the deep point of Figure \ref{fig: disconnect} into account, let us define a sequence as follows:
\begin{align*}
	L_3 &\coloneqq 1, \\
	L_4 &\coloneqq 2, \\
	L_k &\coloneqq L_{k-1} + 2L_{k-2} + 1.
\end{align*}
We can view this sequence as a recursive cutting of $\Delta_m$ into $\Delta_3 \sqcup \Delta_{m-1}$ and $\Delta_{4} \sqcup \Delta_{m-2}$, with the additional 1 accounting for the deep point of $\mathcal{A}(\Delta_4 \sqcup A_{m-2}; \mathbb{F}_2)$ which is not a deep point of $\mathcal{A}(\Delta_m; \mathbb{F}_2)$.

\begin{rem}\label{rem: Lichtenberg}
	The sequence $L_k$ defined here may be recognized as the Lichtenberg sequence with an offset of $2$ (Sequence \seqnum{A000975} in the OEIS \cite{oeis}). In particular, we can observe that $L_{2k} = 2L_{k-1}$ and $L_{2k+1} = 2L_{2k}+1$.
\end{rem}

\begin{prop}\label{prop: polygon non-deep}
	Let $\Delta_m$ be a polygon with $m\geq 4$ sides. Then $L_m$ is exactly the number of non-deep points in $\mathcal{A}(\Delta_m; \mathbb{F}_2)$.
\end{prop}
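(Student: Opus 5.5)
The plan is induction on $m$, with the two base cases and the recurrence $L_k = L_{k-1} + 2L_{k-2} + 1$ matching the cutting of $\Delta_m$ into $\Delta_3 \sqcup \Delta_{m-1}$ and $\Delta_4 \sqcup \Delta_{m-2}$ described before Remark \ref{rem: Lichtenberg}.

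\emph{Reduction to a combinatorial count.} Over $\mathbb{F}_2$ every frozen variable equals $1$, and each cluster torus $\mathbb{T}_{\mathbf{x}}$ has exactly one $\mathbb{F}_2$-point: the point where every arc of the triangulation takes the value $1$. That point is then determined on all other arcs by the Laurent phenomenon. Hence the non-deep points are exactly the distinct points $P_T$ obtained from triangulations $T$, where $P_T$ is the point with $x_\gamma = 1$ for all $\gamma \in T$. So I will count the image of the map $T \mapsto P_T$.

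\emph{Base cases.} For $m=4$, the two triangulations give the points $(x_1,x_1') = (1,0)$ and $(0,1)$, so $L_4 = 2$. (Here $x_1x_1' = 1+1 = 0$.) For $m=5$ I will check directly, using the five triangulations and the Ptolemy relations over $\mathbb{F}_2$, that exactly $L_5 = 5$ distinct points arise. For $m=6$ the count $10$ is already recorded in the Warning, and it agrees with $L_6 = 5 + 4 + 1$.

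\emph{Inductive step.} Label the vertices $1,\dots,m$. Let $\alpha = (2,m)$ be the ear arc cutting off the triangle $\{1,2,m\}$, and let $\beta = (3,m)$, which cuts off the quadrilateral $\{1,2,3,m\}$. The pair $(\alpha,\beta)$ cuts out an unpunctured triangle, so it is a covering pair by Lemma \ref{lemma: Muller LA 10.1}. Every non-deep point therefore satisfies $x_\alpha = 1$ or $x_\beta = 1$. I split the non-deep points as follows.

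\textbf{(i) Points with $x_\alpha = 1$.} Every triangulation either contains $\alpha$ or contains an arc from vertex $1$. The key claim is that a point with $x_\alpha = 1$ is non-deep if and only if it is $P_T$ for some $T \ni \alpha$. Granting this, these points are in bijection with the non-deep points of $\Delta_{m-1}$, where $\alpha$ becomes a boundary arc. By induction there are $L_{m-1}$ of them.

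\textbf{(ii) Points with $x_\alpha = 0$.} Then $x_\beta = 1$. The Ptolemy relation in the quadrilateral $\{1,2,3,m\}$ reads
\[ x_\alpha \, x_{(1,3)} = 1 + x_\beta = 0, \]
so inside that quadrilateral the point looks like the unique point of $\Delta_4$ with $x_\alpha = 0$, namely the one coming from the diagonal $(1,3)$. The remaining data live on $\Delta_{m-2}$, obtained by cutting along $\beta$. I expect these points to contribute $2L_{m-2}$, from the two choices in the $\Delta_4$ factor combined with the non-deep points of $\Delta_{m-2}$. From this I subtract those already counted in (i), and add the point that is non-deep in $\Delta_m$ but becomes deep after the cut (the point in Figure \ref{fig: disconnect}). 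The net contribution should be $2L_{m-2} + 1$. Summing (i) and (ii) gives $L_{m-1} + 2L_{m-2} + 1 = L_m$.

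\emph{Main obstacle.} The delicate part is the exchange argument behind (i) and the exact bookkeeping in (ii). Specifically, if $P_T$ has $x_\alpha = 1$ but $\alpha \notin T$, I must show that $P_T = P_{T'}$ for some $T'$ containing $\alpha$. I will try to do this by flipping arcs at vertex $1$: over $\mathbb{F}_2$, a flip whose new arc evaluates to $1$ preserves the point. Symmetrically, I must verify that exactly one extra point, and no more, arises as a non-deep point of $\Delta_m$ which is deep in $\Delta_4 \sqcup \Delta_{m-2}$. I would first test the whole scheme against $m = 5, 6$ by direct enumeration before writing the general argument.
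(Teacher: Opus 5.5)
\textbf{There is a genuine gap in your inductive step, and it is parity-dependent.} Your reduction to counting the distinct points $P_T$ is fine. The problem is that your key claim in (i) is false for every odd $m$, and (ii) is miscounted in a way that happens to cancel.

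\textbf{Part (i).} The locus $x_\alpha\neq 0$ is the variety of the $(m-1)$-gon on vertices $2,\dots,m$. So the points with $x_\alpha=1$ correspond to \emph{all} points of $\mathcal{A}(\Delta_{m-1};\mathbb{F}_2)$, including its deep ones. For $m$ odd, $\Delta_{m-1}$ has the deep point $x_{ij}\equiv j-i \pmod 2$. Its lift to $\Delta_m$ is $P_T$ for $T$ the fan at vertex $1$. Indeed, once $x_{1j}=1$ for all $j$, the Ptolemy relation in $\{1,a,b,c\}$ becomes $x_{ac}=x_{ab}+x_{bc}$. This forces exactly the parity pattern, with $x_\alpha=x_{2m}=1$. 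The point is therefore non-deep in $\Delta_m$ and has $x_\alpha=1$, but it equals no $P_{T'}$ with $\alpha\in T'$. Your flip strategy breaks precisely here: flipping a fan arc $(1,j)$ produces $(j-1,j+1)$, which takes the value $0$. Already for $m=5$, the point with $x_{13}=x_{14}=x_{25}=1$ and $x_{24}=x_{35}=0$ is a counterexample. So for odd $m$, part (i) contains $L_{m-1}+1$ non-deep points, not $L_{m-1}$.

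\textbf{Part (ii).} Over $\mathbb{F}_2$ the relation $x_\alpha x_{13}=1+x_\beta=0$ does not determine $x_{13}$. The quadrilateral $\{1,2,3,m\}$ has two points with $x_\alpha=0$: $(x_\alpha,x_{13})=(0,1)$ and its deep point $(0,0)$. Hence (ii) consists of exactly $2\,\#\mathcal{A}(\Delta_{m-2};\mathbb{F}_2)$ points. None of them was counted in (i), so there is nothing to subtract. The point of Figure \ref{fig: disconnect} takes the value $1$ on the ear arc, so it lies in (i), not (ii). For odd $m$ the whole set (ii) has only $2L_{m-2}$ elements, so it cannot contribute $2L_{m-2}+1$.

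\textbf{The correct bookkeeping.} Every point in (i) is non-deep. Part (ii) contains exactly one deep point when $m$ is even (the parity point) and none when $m$ is odd. Establishing this description of the deep locus is the real content of the proposition, and your sketch gives no mechanism for it. The paper's cutting picture is only motivation. Its actual proof imports the deep-point count from \cite[Theorem 3.5]{BM25}, subtracts it from $\#\mathcal{A}(\Delta_m;\mathbb{F}_2)=\frac13\bigl(2^{m-1}+(-1)^m\bigr)$ (Proposition \ref{prop:polygon points} at $q=2$), and checks the recurrence algebraically.

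\textbf{How to repair your route.} To keep a self-contained combinatorial proof, you must prove that deep-locus statement yourself. One way is in two steps. First, show by induction that any point with some ear arc equal to $1$ is some $P_T$; the fan at the cut-off vertex handles the lifted parity point. Second, show that if all ear arcs vanish, the relations $x_{ij}=x_{i+2,j}$ force the parity pattern, which is compatible with $x_{1m}=1$ only for even $m$.
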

\begin{proof}
	First, recall that the number of deep points of $\mathcal{A}(\Delta_m; \mathbb{F}_2)$ is exactly $1$ if $m$ is even, and $0$ if $m$ is odd \cite[Theorem 3.5]{BM25}. Simplifying the formulae for number of points over $\mathbb{F}_2$, the number of non-deep points in $\mathcal{A}(\Delta_m; \mathbb{F}_2)$ is:
	\[ \begin{cases}
	 	\frac{1}{3} (2^{m-1}-1) &\text{if $m$ is odd,} \\
		\frac{1}{3} (2^{m-1}+1)-1 &\text{if $m$ is even.}
	\end{cases} \]	
	For $m=4$, we get $L_4 = 2$ non-deep points; for $m=5$, we get $L_5 = 5$ non-deep points. For $m\geq 6$, we will use induction while considering even and odd cases separately. 
	
	For our inductive hypothesis, assume there is some integer $k$ such that the proposition holds for all $4\leq i \leq k-1$. If $k$ is even, then the number of non-deep points is 
\begin{align*}
	\frac{1}{3} (2^{k-1}+1) - 1 &= \frac{1}{3}(2^{k-1} - 2) \\ &= \frac{2}{3}(2^{k-2} - 1) \\ &= 2L_{k-1} = L_{k-1} + 2 L_{k-2} + 1 = L_k.
\end{align*}
	If $k$ is odd, then the number of non-deep points is 
\begin{align*}
	\frac{1}{3} (2^{k-1}-1) &= \frac{1}{3}(2^{k-1} + 2) - 1 \\ &= \frac{2}{3}(2^{k-2} + 1) - 1 \\ &= 2\bigg(\frac{1}{3}(2^{k-2} + 1) - 1\bigg) + 1 \\ &= 2L_{k-1} + 1 = L_{k-1} + 2 L_{k-2} + 1 = L_k.
\end{align*}
Thus, $L_k$ is the number of non-deep points in $\mathcal{A}(\Delta_m; \mathbb{F}_2)$ for all $m\geq 4$. 
\end{proof}

\subsection{Algebraic tori over \texorpdfstring{$\mathbb{F}_2$}{F2}}

 From Theorem \ref{thm: punctured count full}, we know the total number of points in a cluster variety over $\mathbb{F}_2$ is 
\[ \#\mathcal{A}(\Sigma_{g,b,m}^p; \mathbb{F}_2) = 3^{2g+b-2} \big( 2^{2g+m+b+p-2} 5^p + (-1)^m 7^p \big). \]
By subtracting the number of deep points, we can find the number of algebraic tori necessary to cover the cluster manifold over $\mathbb{F}_2$. We can then adjust the recurrence relation accordingly to extend Proposition \ref{prop: polygon non-deep} to more general marked surfaces.

\subsubsection{Unpunctured Surfaces}

For an unpunctured surface $\Sigma_{g,b,m}^0$, Theorems 7.12 and A.1 of \cite{BM25} combine to tell us that if the number of marked points $m$ is odd, there are no deep points, and if $m$ is even, there are $2^{2g+b-1}$ deep points over $\mathbb{F}_2$. As such, we know the number of non-deep points is equal to 
\begin{displaymath}
	L(\Sigma_{g,b,m}^0) \coloneqq 
	\begin{cases}
		\#\mathcal{A}(\Sigma_{g,b,m}^0; \mathbb{F}_2) & \text{if $m$ is odd,} \\
		\#\mathcal{A}(\Sigma_{g,b,m}^0; \mathbb{F}_2) - 2^{2g+b-1} & \text{if $m$ is even.}
	\end{cases}
\end{displaymath}

\begin{prop}\label{prop: unpunc recursion}
    Let $\Sigma_{g,b,m}^0$ be a connected, unpunctured, triangulable surface of genus $g$ with $b$ boundary components and $m$ marked points. 
	Then the number of non-deep points over $\mathbb{F}_2$ satisfies the recurrence relation 
	\[ L(\Sigma_{g,b,m}^0) = L(\Sigma_{g,b,m-1}^0) + 2 L(\Sigma_{g,b,m-2}^0) + 2^{2g+b-1}. \]
\end{prop}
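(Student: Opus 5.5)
This is a direct computation from the closed forms already available. Write $N_m = \#\mathcal{A}(\Sigma_{g,b,m}^0;\mathbb{F}_2)$. By Lemma \ref{lemma: recurrence} with $p=0$, the total counts satisfy the Jacobsthal recurrence
\[ N_m = N_{m-1} + 2N_{m-2}, \]
provided all three surfaces $\Sigma_{g,b,m}$, $\Sigma_{g,b,m-1}$, $\Sigma_{g,b,m-2}$ fall under the hypotheses of Theorem \ref{thm: punctured count full}. The recurrence for $L$ then reduces to tracking how the deep-point correction $2^{2g+b-1}$ from \cite[Theorems 7.12, A.1]{BM25} enters for even $m$ and is absent for odd $m$.

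Set $D = 2^{2g+b-1}$ and split by the parity of $m$.
\begin{itemize}
\item If $m$ is even, then $m-1$ is odd and $m-2$ is even. Hence $L(\Sigma_{g,b,m}) = N_m - D$, $L(\Sigma_{g,b,m-1}) = N_{m-1}$ and $L(\Sigma_{g,b,m-2}) = N_{m-2} - D$. The right-hand side becomes $N_{m-1} + 2N_{m-2} - 2D + D = N_m - D$, which is the left-hand side.
\item If $m$ is odd, then $L(\Sigma_{g,b,m}) = N_m$, $L(\Sigma_{g,b,m-1}) = N_{m-1} - D$ and $L(\Sigma_{g,b,m-2}) = N_{m-2}$. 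The right-hand side becomes $N_{m-1} - D + 2N_{m-2} + D = N_m$.
\end{itemize}
In both cases the constant $D$ is exactly what is needed. The same mechanism gives the $+1$ in the polygon sequence $L_k$ of Proposition \ref{prop: polygon non-deep}, and one can check consistency with that case ($g=0$, $b=1$, $D=1$).

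The main obstacle is the range of validity, not the algebra. The closed form and the deep-point count must both hold for $m-1$ and $m-2$. So I would state the recurrence for those $m$ such that $\Sigma_{g,b,m-2}$ is still triangulable, has at least one marked point on each boundary component, and satisfies the hypotheses of Theorem \ref{thm: punctured count full}.

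The most delicate case is where some of $\Sigma_{g,b,m}$, $\Sigma_{g,b,m-1}$, $\Sigma_{g,b,m-2}$ have exactly one marked point per boundary component. The formula still holds there by the computation preceding Theorem \ref{thm: punctured count full}. I would check that the cited deep-point result of \cite{BM25} also covers that configuration, since Theorem A.1 counts $H^1(\Sigma;\mathbb{Z}_2)$ classes and so does not depend on how the marked points are distributed.

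For $b=1$, $g=0$ I would fall back on Proposition \ref{prop:polygon points} and Proposition \ref{prop: polygon non-deep} directly, requiring $m-2\geq 3$ and using the remark that the polygon formula extends to $m=3$.
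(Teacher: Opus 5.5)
Your proposal is correct and is essentially the paper's proof. Like the paper, you split on the parity of $m$, substitute $L = N$ or $L = N - 2^{2g+b-1}$ for each of the three surfaces, and apply the Jacobsthal recurrence of Lemma \ref{lemma: recurrence} at $p=0$, so that the added constant $2^{2g+b-1}$ exactly offsets the deep-point correction. Your attention to the range of $m$ for which $\Sigma_{g,b,m-1}$ and $\Sigma_{g,b,m-2}$ still satisfy the hypotheses is a point the paper leaves implicit.
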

\begin{proof}
	If $m$ is odd, then using Lemma \ref{lemma: recurrence}, we have 
\begin{align*}
	L(\Sigma_{g,b,m-1}^0) + 2 L(\Sigma_{g,b,m-2}^0) + 2^{2g+b-1} &= \#\mathcal{A}(\Sigma_{g,b,m-1}^0; \mathbb{F}_2) - 2^{2g+b-1} \\ &\quad\quad + 2\#\mathcal{A}(\Sigma_{g,b,m-2}^0; \mathbb{F}_2) + 2^{2g+b-1} \\
	&= \#\mathcal{A}(\Sigma_{g,b,m-1}^0; \mathbb{F}_2) + 2 \#\mathcal{A}(\Sigma_{g,b,m-2}^0; \mathbb{F}_2) \\
	&= \#\mathcal{A}(\Sigma_{g,b,m}^0; \mathbb{F}_2) = L(\Sigma_{g,b,m}^0).
\end{align*}

	Similarly, if $m$ is even, we have 
\begin{align*}
	L(\Sigma_{g,b,m-1}^0) + 2 L(\Sigma_{g,b,m-2}^0) + 2^{2g+b-1} &= \#\mathcal{A}(\Sigma_{g,b,m-1}^0; \mathbb{F}_2) + 2\big(\#\mathcal{A}(\Sigma_{g,b,m-2}^0; \mathbb{F}_2) - 2^{2g+b-1}\big) \\ &\quad\quad + 2^{2g+b-1} \\
	&= \#\mathcal{A}(\Sigma_{g,b,m-1}^0; \mathbb{F}_2) + 2\#\mathcal{A}(\Sigma_{g,b,m-2}^0; \mathbb{F}_2) - 2^{2g+b-1} \\
	&= \#\mathcal{A}(\Sigma_{g,b,m}^0; \mathbb{F}_2) - 2^{2g+b-1} = L(\Sigma_{g,b,m}^0).\qedhere
\end{align*}
\end{proof}

\subsubsection{Punctured Surfaces}

For punctured surfaces, calculating the number of deep points is more complicated. For a given puncture, a deep point that kills all arcs (of both taggings) emanating from that puncture is said to \emph{quash} that puncture. A deep point that quashes all punctures is called a \emph{malevolent} point, and a deep point that does not quash any punctures is called \emph{benevolent}. 

Benevolent points exist only when the number of marked points $m$ is even. To quantify the benevolent points over $\mathbb{F}_2$, we have the following theorem:
\begin{thm}\emph{\cite[Theorem 8.2.9]{BeyerDissertation}}
	Let $\Sigma_{g,b,m}^p$ be a connected, triangulable surface of genus $g$ with $b>1$ boundary components, $m\geq 2$ marked points, and $p\geq 1$ punctures.
	If $m$ is odd, then $\mathcal{A}(\Sigma_{g,b,m}^p ; \Bbbk)$ has no benevolent deep points. If $m$ is even, then the deep locus of $\mathcal{A}(\Sigma_{g,b,m}^p ; \Bbbk)$ contains $2^{2g+b+2p+1}$ sets of benevolent points, each isomorphic to $(\Bbbk^\times )^{2g+b+p+m-2}$.
\end{thm}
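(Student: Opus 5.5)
We need to propose a proof of the cited theorem (Beyer dissertation, Theorem 8.2.9): benevolent deep points exist only for $m$ even, and then form $2^{2g+b+2p+1}$ families, each a torus $(\Bbbk^\times)^{2g+b+p+m-2}$.

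The plan is to reduce to the unpunctured classification of deep points from \cite{BM25}, applied to an auxiliary surface obtained by cutting out each puncture. By definition, a benevolent deep point does not quash any puncture. So for each puncture $P$, some tagged arc incident to $P$ (plain or notched) takes a nonzero value.

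First we localize at such arcs. Using the covering pair of Figure \ref{fig: covering pair punctured}, choose for each puncture a once-punctured digon whose radius $\beta$ (of the appropriate tagging) is nonzero at the point. Cutting along $\beta$ identifies the relevant piece of the variety, via the cutting isomorphism \cite[Proposition 5.3.9]{BeyerDissertation}, with $\mathcal{A}(\Sigma_{g,b,m+2}^{p-1}/\!\sim)$. We iterate this over all $p$ punctures. Deepness is preserved under localization at a nonvanishing cluster variable: the cluster tori containing the point are exactly those of triangulations containing $\beta$, up to the usual caveat for separating cuts as in Figure \ref{fig: disconnect}. A radius cut does not disconnect, so that caveat should not arise. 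Benevolent deep points therefore correspond to deep points of the unpunctured surface $\Sigma_{g,b,m+2p}^0$, subject to the identifications $\beta'=\beta''$.

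Next we apply \cite[Theorems 7.12, A.1]{BM25} to this unpunctured surface. Its deep locus is empty when $m+2p$ is odd, which happens exactly when $m$ is odd; this gives the first claim. When $m$ is even, the deep locus is a union of tori indexed by the classes in $H^1(\Sigma;\mathbb{Z}_2)$ that take value $1$ on every boundary arc. Pulling these back, we then count the components.

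The step we expect to be the main obstacle is this count, because tagging enters it. Each puncture contributes a choice of which tagging's radii survive, and a parity datum on the loop around the puncture. The plan is to show that the contribution is a factor $2^2$ per puncture and $2^{2g+b+1}$ from the underlying surface. This would match $2^{2g+b+2p+1}$, which is twice the unpunctured count $2^{2g+b-1}$ times $4^p$.

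Finally, each component is a torus whose dimension is the number of boundary (frozen) values plus the free nonzero values forced by the deep-point structure. We compute this by tracking how the torus dimension changes under each cut: the two copies of $\beta$ are identified, and one new frozen value is added per puncture. This should give $2g+b+p+m-2$. As a check against known cases, we would verify the count on $\Sigma_{0,1,2}^1$ (via Proposition \ref{prop: digon count}) and on $\Sigma_{0,1,4}^1$, where Example \ref{ex: D4 triangulations} records $7$ deep points over $\mathbb{F}_2$.
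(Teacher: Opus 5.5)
The paper gives no proof of this statement; it imports it from \cite[Theorem 8.2.9]{BeyerDissertation}. Your proposal therefore has to stand on its own, and for the even case it does not. The count is the real content of the theorem, and it is never derived. The numbers you do write down are inconsistent: $2^{2g+b+1}$ is four times, not twice, the unpunctured count $2^{2g+b-1}$, and $2\cdot 2^{2g+b-1}\cdot 4^p=2^{2g+b+2p}$ matches neither.

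Your actual ingredients are $2^{2g+b-1}$ components from \cite{BM25} and a factor $4$ per puncture. Together they give $2^{2g+b+2p-1}$. That is the number the paper itself uses in its formula for $D(\Sigma_{g,b,m}^p)$ and in Lemma \ref{lem: punc deep recursion}. Your own proposed checks (both with $b=1$, a case you evidently mean to cover) agree:
\begin{itemize}
\item By the proof of Proposition \ref{prop: digon count}, a point of the punctured digon is deep iff $x_1=x_1'=0$ or $x_2=x_2'=0$. So its benevolent deep locus is exactly the four tori $(b,0,0,0,a,-a)$, $(0,b,0,0,a,-a)$, $(0,0,b,0,a,-a)$, $(0,0,0,b,a,-a)$, each $\cong(\Bbbk^\times)^2$.
\item In Example \ref{ex: D4 triangulations}, the $7$ deep points of $\mathcal{A}(\Sigma_{0,1,4}^1;\mathbb{F}_2)$ split into $3$ quashing ones (a copy of $\mathcal{A}(\Delta_4;\mathbb{F}_2)$) and $4$ benevolent ones. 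By contrast, $2^{2g+b+2p+1}=16>7$.
\end{itemize}
So the exponent $2g+b+2p+1$ in the statement as quoted, and in Corollary \ref{cor: benevolent F2}, appears to be a misprint for $2g+b+2p-1$. Postulating an extra factor ``$2^{2g+b+1}$ from the underlying surface'' to reach it is a fudge, not a proof step.

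Even aimed at the right number, three steps are asserted rather than proved.
\begin{enumerate}
\item[(i)] \emph{Selection of radii.} Benevolence only gives \emph{some} nonzero tagged arc at each puncture, possibly one ending at another puncture. To get a nonzero radius to the boundary you need two things: the covering-pair dichotomy of Figure \ref{fig: covering pair punctured} ($\alpha\neq 0$ or $\beta\neq 0$), and a tagged skein relation across $\alpha$. The skein relation must show that, when $\alpha\neq0$, the vanishing of the four tagged radii in the digon forces every arc at the puncture to vanish. The radii for different punctures must also be chosen successively in the already-cut surface, so that they are compatible.
\item[(ii)] \emph{Deepness under cutting.} Only one direction is formal. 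Cluster tori of $A(\Sigma)[\beta^{-1}]$ are cluster tori of seeds containing $\beta$, so a deep point of $\Sigma$ with $\beta\neq0$ stays deep after cutting. Together with (i) and \cite{BM25}, this settles the odd case. The even case needs the converse: every deep point of $\Sigma_{g,b,m+2p}^0/\!\sim$ must be deep in $\Sigma_{g,b,m}^p$, i.e.\ avoid the cluster tori of all tagged triangulations not containing the chosen radii. That is exactly the direction that fails in Figure \ref{fig: disconnect}. ``The cut does not disconnect'' is a heuristic, not an argument.
\item[(iii)] \emph{Overlaps.} A benevolent point usually has several nonzero radii, so it lies in several cut pieces. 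A single fixed choice of radii yields only $2^{2g+b-1}$ components. The factor $4^p$ (tagging times the parity around each puncture) has to come from determining which components arising from different radii and taggings are distinct in $\mathcal{A}(\Sigma_{g,b,m}^p;\Bbbk)$. That bookkeeping is the heart of the proof, and it is missing.
\end{enumerate}
Your dimension count is fine once you quote the dimension of the \cite{BM25} components: $2g+b+(m+2p)-2$ for those components, minus one for each identification $\beta_i'=\beta_i''$.
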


\begin{cor}\label{cor: benevolent F2}
	Let $\Sigma_{g,b,m}^p$ be a connected, triangulable surface of genus $g$ with $b>1$ boundary components, $m\geq 2$ marked points, and $p\geq 1$ punctures.
	If $m$ is odd, then $\mathcal{A}(\Sigma_{g,b,m}^p ; \mathbb{F}_2)$ has no benevolent deep points. If $m$ is even, then the deep locus of $\mathcal{A}(\Sigma_{g,b,m}^p ; \mathbb{F}_2)$ contains $2^{2g+b+2p+1}$  benevolent points.
\end{cor}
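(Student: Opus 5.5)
This is a direct specialization of the preceding theorem (\cite[Theorem 8.2.9]{BeyerDissertation}) to $\Bbbk = \mathbb{F}_2$. We take its hypotheses unchanged, since they coincide with those of the corollary, and treat the two parity cases of $m$ separately.

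\emph{Odd $m$.} The theorem states that $\mathcal{A}(\Sigma_{g,b,m}^p;\Bbbk)$ has no benevolent deep points for any field $\Bbbk$. In particular this holds for $\Bbbk=\mathbb{F}_2$, and nothing more is needed.

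\emph{Even $m$.} The theorem says that the benevolent part of the deep locus over $\Bbbk$ is a union of $2^{2g+b+2p+1}$ sets, each isomorphic to $(\Bbbk^\times)^{2g+b+p+m-2}$. Counting $\mathbb{F}_2$-points then takes two steps.
\begin{enumerate}
\item Since $\mathbb{F}_2^\times=\{1\}$, each torus $(\mathbb{F}_2^\times)^{2g+b+p+m-2}$ has exactly one $\mathbb{F}_2$-point. This is the same observation the paper uses in Section~\ref{section: tori}, namely that a torus over $\mathbb{F}_2$ is set-theoretically a point.
\item The $2^{2g+b+2p+1}$ sets must be pairwise disjoint, so that their points are not double counted.
\end{enumerate}
Combining these gives exactly $2^{2g+b+2p+1}$ benevolent points over $\mathbb{F}_2$.

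The only step needing care is the disjointness in (2). Following \cite{BeyerDissertation}, I would argue that the sets are distinguished by which arcs are forced to vanish. The pattern of vanishing arcs is a discrete invariant: it takes the same value at every point of a given set, and different values on different sets. On each torus the non-vanishing coordinates range over $\Bbbk^\times$, so two distinct sets cannot share a point, and their $\mathbb{F}_2$-points are therefore distinct.

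With disjointness established, the count is simply the number of sets times one point per set, which proves the corollary.
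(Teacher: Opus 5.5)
Your argument matches the paper's: the paper gives no separate proof and treats the corollary as the specialization of \cite[Theorem 8.2.9]{BeyerDissertation} to $\Bbbk=\mathbb{F}_2$, using $\mathbb{F}_2^\times=\{1\}$. Your disjointness step is extra care the paper leaves implicit, and you do not really need it: over $\mathbb{F}_2$ the sets are singletons, so distinct sets are automatically disjoint, whereas your vanishing-pattern justification is an unchecked claim about the dissertation. One caution unrelated to your reasoning: the exponent $2g+b+2p+1$ you inherit from the quoted theorem conflicts with the paper's own deep-point lemma and Theorem~\ref{thm: nondeep recurrence}, which use $2^{2g+b+2p-1}$ benevolent points, the value that gives $4+3=7$ deep points in the $D_4$ example, so the stated count is probably a transcription typo.
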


There exist deep points that quash some given number of punctures regardless of whether $m$ is even or odd. We know that the set of deep points that quash a given puncture $p_i$ is isomorphic to $\mathcal{A}(\Sigma_{g,b,m}^{p-1}; \Bbbk)$; the intersection of any two such sets is isomorphic to $\mathcal{A}(\Sigma_{g,b,m}^{p-2}; \Bbbk)$, and the intersection of any $k$ such sets is isomorphic to $\mathcal{A}(\Sigma_{g,b,m}^{p-k}; \Bbbk)$ \cite[Chapter 8]{BeyerDissertation}. Combined with the number of benevolent points from Corollary \ref{cor: benevolent F2}, the inclusion-exclusion principle gives us the following formula for the number of deep points over $\mathbb{F}_2$.

\begin{lemma}
	Let $\Sigma_{g,b,m}^p$ be a connected, triangulable surface of genus $g$ with $b>1$ boundary components, $m\geq 2$ marked points, and $p\geq 1$ punctures.
	The number of deep points in $\mathcal{A}(\Sigma_{g,b,m}^p ; \mathbb{F}_2)$ is
\begin{displaymath}
	D(\Sigma_{g,b,m}^p) \coloneqq 
	\begin{cases}
		\displaystyle \sum_{i=1}^p (-1)^{i+1}\binom{p}{i} \#\mathcal{A}(\Sigma_{g,b,m}^{p-i} ; \mathbb{F}_2) & \text{if $m$ is odd,} \\
		2^{2g+b+2p-1} + \displaystyle \sum_{i=1}^p (-1)^{i+1}\binom{p}{i} \#\mathcal{A}(\Sigma_{g,b,m}^{p-i} ; \mathbb{F}_2) & \text{if $m$ is even.}
	\end{cases}
\end{displaymath}	
\end{lemma}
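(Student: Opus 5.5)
The plan is to split the deep locus of $\mathcal{A}(\Sigma_{g,b,m}^p;\mathbb{F}_2)$ into two disjoint pieces and count each piece separately. The first piece consists of the deep points that quash at least one puncture. The second consists of the deep points that quash no puncture, which are by definition the benevolent points. Two facts need to be recorded first. Every deep point either quashes some puncture or quashes none, so the two pieces cover the deep locus. They are disjoint by definition. I will also take from \cite[Chapter 8]{BeyerDissertation} that every point quashing a puncture is deep, since all arcs at that puncture vanish and so the point lies in no cluster torus. This gives
\[ D(\Sigma_{g,b,m}^p) = \#\Big(\bigcup_{i=1}^p Q_i\Big) + \#\{\text{benevolent points}\}, \]
where $Q_i$ denotes the set of points quashing the puncture $p_i$.

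For the first piece I will apply inclusion--exclusion to $Q_1,\dots,Q_p$. By the cited structure result, any intersection $Q_{i_1}\cap\dots\cap Q_{i_k}$ with distinct indices is isomorphic to $\mathcal{A}(\Sigma_{g,b,m}^{p-k};\mathbb{F}_2)$. These isomorphisms are defined over the base ring, so they preserve the number of $\mathbb{F}_2$-points. There are $\binom{p}{k}$ such $k$-fold intersections, so
\[ \#\Big(\bigcup_{i=1}^p Q_i\Big)=\sum_{k=1}^p(-1)^{k+1}\binom{p}{k}\,\#\mathcal{A}(\Sigma_{g,b,m}^{p-k};\mathbb{F}_2). \]
Each term can be evaluated by Theorem \ref{thm: punctured count full}, or by the genus-$g$ unpunctured count when $k=p$. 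In the unpunctured term $\#\mathcal{A}(\Sigma^0_{g,b,m};\mathbb{F}_2)$, the points where every arc at every puncture vanishes are just arbitrary points of the unpunctured surface, so nothing further is deleted there.

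For the second piece I will use Corollary \ref{cor: benevolent F2}. When $m$ is odd there are no benevolent points, which gives the first case of the formula. When $m$ is even, each benevolent stratum is a torus $(\mathbb{F}_2^\times)^{2g+b+p+m-2}$, and over $\mathbb{F}_2$ such a torus is a single point. So the benevolent count equals the number of benevolent strata.

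The main obstacle is pinning down the exponent in the benevolent term, because the stated exponent $2g+b+2p-1$ differs from the $2g+b+2p+1$ quoted in the corollary. I would first sanity-check against Example \ref{ex: D4 triangulations}, the surface $\Sigma^1_{0,1,4}$, which has exactly $7$ deep points.
\begin{itemize}
\item The quashing term there is $\#\mathcal{A}(\Delta_4;\mathbb{F}_2)=3$.
\item The benevolent count must therefore be $4=2^{0+1+2-1}$, which supports the exponent $2g+b+2p-1$ in the statement.
\item The exponent $2g+b+2p+1$ would instead give $16$ benevolent points, which is too many.
\end{itemize}
So I would return to the count in \cite[Theorem 8.2.9]{BeyerDissertation} and check how the sign and tagging data at the punctures and boundary are identified. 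I expect that a factor of $4$ is an overcount by tagging or sign choices that collapse over $\mathbb{F}_2$, where $-1=1$, and I would confirm this on the once-punctured digon via the explicit point list in Proposition \ref{prop: digon count}. Once the benevolent count is settled, adding the two pieces gives both cases of the formula.
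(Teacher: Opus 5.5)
Your decomposition is the paper's argument. The paper's proof combines inclusion--exclusion over the quashing loci with the benevolent count of Corollary \ref{cor: benevolent F2}, using the identification of every $k$-fold intersection with $\mathcal{A}(\Sigma^{p-k}_{g,b,m};\Bbbk)$ from \cite[Chapter 8]{BeyerDissertation}. You also note that any point killing every tagged arc at a puncture is automatically deep, since every tagged triangulation contains an arc at each puncture. The paper leaves this implicit, and it is what makes your two pieces partition the deep locus.

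You are right that the printed exponent $2g+b+2p+1$ in Corollary \ref{cor: benevolent F2} contradicts the statement, and the paper's proof passes over this silently. Your $\Sigma^1_{0,1,4}$ check ($7=3+4$) is correct. However, the explanation you propose, a sign or tagging overcount that collapses when $-1=1$, is wrong, and the digon check you plan would refute it.

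\begin{itemize}
\item By the enumeration in Proposition \ref{prop: digon count}, over \emph{every} field the benevolent locus of $\Sigma^1_{0,1,2}$ consists of exactly the four families with $y_2=-y_1$ and a single nonzero radius.
\item These points lie in no cluster torus, because every cluster contains one of $x_1,x_1'$ and one of $x_2,x_2'$. They also do not quash the puncture.
\item Each family is $\cong(\Bbbk^\times)^2$, matching $2=2g+b+p+m-2$. That gives $4=2^{2g+b+2p-1}$ strata in every characteristic, not $16$. So the factor of $4$ is not a characteristic-$2$ effect.
\item At $p=0$, the exponent $2g+b+2p-1$ recovers the count $2^{2g+b-1}$ from \cite{BM25} used in Proposition \ref{prop: unpunc recursion}, with each puncture contributing a factor of $4$. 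The printed exponent would be off by a factor of $4$ there.
\end{itemize}

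So the ``$+1$'' in the quoted \cite[Theorem 8.2.9]{BeyerDissertation} and in Corollary \ref{cor: benevolent F2} is a misprint for ``$-1$''. This is the value the paper actually uses in Lemma \ref{lem: punc deep recursion} and Theorem \ref{thm: nondeep recurrence}. Replace your open step with this: there are $2^{2g+b+2p-1}$ disjoint benevolent tori, each with exactly one $\mathbb{F}_2$-point. Then your argument closes as written.

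Both sanity checks have $b=1$. They therefore test the statement only if the hypothesis ``$b>1$'' is read as $b\ge 1$. The paper evidently intends that reading, since it applies these formulas to $\Sigma^1_{0,1,4}$.
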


This formula for the number of deep points over $\mathbb{F}_2$ satisfies the following relation:
\begin{lemma}\label{lem: punc deep recursion}
	Let $\Sigma_{g,b,m}^p$ be a connected, triangulable surface of genus $g$ with $b>1$ boundary components, $m\geq 2$ marked points, and $p\geq 1$ punctures. Then
	 \[ D(\Sigma_{g,b,m}^p) + 2^{2g+b+2p-1} = D(\Sigma_{g,b,m-1}^p) + 2 D(\Sigma_{g,b,m-2}^p). \]	
\end{lemma}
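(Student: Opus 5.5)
Since $D(\Sigma_{g,b,m}^p)$ is given by an explicit case-split formula, I will verify the identity directly by splitting on the parity of $m$. The identity is linear in $D$, so I will decompose $D(\Sigma_{g,b,m}^p) = B_m + S_m$. Here $S_m \coloneqq \sum_{i=1}^p (-1)^{i+1}\binom{p}{i}\#\mathcal{A}(\Sigma_{g,b,m}^{p-i};\mathbb{F}_2)$ is the inclusion--exclusion part, and $B_m$ is the benevolent contribution: $B_m = 2^{2g+b+2p-1}$ if $m$ is even and $B_m=0$ if $m$ is odd. The parameters $g,b,p$ are fixed throughout.

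\emph{The sum part.} By Lemma \ref{lemma: recurrence}, applied for each fixed number of punctures $p-i$ with $0\le p-i\le p-1$, we have $\#\mathcal{A}(\Sigma_{g,b,m}^{p-i};\mathbb{F}_2) = \#\mathcal{A}(\Sigma_{g,b,m-1}^{p-i};\mathbb{F}_2) + 2\#\mathcal{A}(\Sigma_{g,b,m-2}^{p-i};\mathbb{F}_2)$. Multiplying by $(-1)^{i+1}\binom{p}{i}$ and summing over $i$ gives $S_m = S_{m-1} + 2S_{m-2}$. If one is worried about the hypotheses of Lemma \ref{lemma: recurrence} for the smaller surfaces, one can instead plug in the closed form $3^{2g+b-2}\big(2^{2g+m+b+p-i-2}5^{p-i} + (-1)^m 7^{p-i}\big)$ from Theorem \ref{thm: punctured count full}. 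Both summands of this closed form visibly satisfy $a_m=a_{m-1}+2a_{m-2}$, since $2^m$ and $(-1)^m$ are the roots of $x^2=x+2$.

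\emph{The benevolent part.} It remains to check that $B_m + 2^{2g+b+2p-1} = B_{m-1} + 2B_{m-2}$. Write $c = 2^{2g+b+2p-1}$.
\begin{itemize}
\item If $m$ is even, then $m-2$ is even and $m-1$ is odd. The left side is $c+c=2c$, and the right side is $0+2c$, so the two sides agree.
\item If $m$ is odd, then $m-1$ is even and $m-2$ is odd. The left side is $0+c$, and the right side is $c+0$, so again they agree.
\end{itemize}
Adding this identity to $S_m = S_{m-1}+2S_{m-2}$ yields the lemma.

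The only real obstacle is bookkeeping. First, the exponent of the benevolent count: Corollary \ref{cor: benevolent F2} states $2^{2g+b+2p+1}$, while the lemma defining $D$ uses $2^{2g+b+2p-1}$. I will work with the latter as written in the definition of $D$; the argument is insensitive to the value of the constant $c$, as long as the same constant appears in the statement. Second, I need the recurrence for the point counts to hold down to $m-2$. I will handle this by relying on the closed-form polynomial identity rather than on triangulability of the smaller surfaces.
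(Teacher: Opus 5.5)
Your proposal is correct and follows essentially the same route as the paper: split on the parity of $m$, apply Lemma~\ref{lemma: recurrence} termwise to the inclusion--exclusion sum, and check that the constant $2^{2g+b+2p-1}$ balances in each parity case. Your decomposition $D = B_m + S_m$ just packages the paper's two case computations into one. Your falling back on the closed form from Theorem~\ref{thm: punctured count full} is a sensible safeguard about the smaller surfaces rather than a change of method. You are also right that only consistency with the constant in the definition of $D$ matters, not the $2^{2g+b+2p+1}$ stated in Corollary~\ref{cor: benevolent F2}.
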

\begin{proof}
	If $m$ is odd, then using Lemma \ref{lemma: recurrence}, 
	\begin{align*}
		D(\Sigma_{g,b,m-1}^p) &+ 2 D(\Sigma_{g,b,m}^p) = 2^{2g+b+2p-1} + \sum_{i=1}^p (-1)^{i+1}\binom{p}{i} \#\mathcal{A}(\Sigma_{g,b,m-1}^{p-i} ; \mathbb{F}_2) \\ &\quad \quad + 2 \sum_{i=1}^p (-1)^{i+1}\binom{p}{i} \#\mathcal{A}(\Sigma_{g,b,m-2}^{p-i} ; \mathbb{F}_2) \\
		&= 2^{2g+b+2p-1} + \sum_{i=1}^p (-1)^{i+1}\binom{p}{i} \big( \#\mathcal{A}(\Sigma_{g,b,m-1}^{p-i} ; \mathbb{F}_2) + 2 \#\mathcal{A}(\Sigma_{g,b,m-2}^{p-i} ; \mathbb{F}_2) \big) \\
		&= 2^{2g+b+2p-1} + \sum_{i=1}^p (-1)^{i+1}\binom{p}{i} \#\mathcal{A}(\Sigma_{g,b,m}^{p-i} ; \mathbb{F}_2) \\
		&=  2^{2g+b+2p-1} + D(\Sigma_{g,b,m}^p).
	\end{align*}	

Similarly, if $m$ is even,
\begin{align*}
	D(\Sigma_{g,b,m-1}^p) &+ 2 D(\Sigma_{g,b,m}^p) = \sum_{i=1}^p (-1)^{i+1}\binom{p}{i} \#\mathcal{A}(\Sigma_{g,b,m-1}^{p-i} ; \mathbb{F}_2) \\ &\quad \quad + 2 \bigg( 2^{2g+b+2p-1} + \sum_{i=1}^p (-1)^{i+1}\binom{p}{i} \#\mathcal{A}(\Sigma_{g,b,m-2}^{p-i} ; \mathbb{F}_2) \bigg) \\
	&= 2^{2g+b+2p} + \sum_{i=1}^p (-1)^{i+1}\binom{p}{i} \big( \#\mathcal{A}(\Sigma_{g,b,m-1}^{p-i} ; \mathbb{F}_2) + 2 \#\mathcal{A}(\Sigma_{g,b,m-2}^{p-i} ; \mathbb{F}_2) \big) \\
	&= 2^{2g+b+2p} + \sum_{i=1}^p (-1)^{i+1}\binom{p}{i} \#\mathcal{A}(\Sigma_{g,b,m}^{p-i} ; \mathbb{F}_2) \\
	&= 2^{2g+b+2p-1} + D(\Sigma_{g,b,m}^p).
\end{align*}	
\end{proof}

We can now construct a formula for the number of non-deep points over $\mathbb{F}_2$, 
\[ L(\Sigma_{g,b,m}^p) \coloneqq \#\mathcal{A}(\Sigma_{g,b,m}^p; \mathbb{F}_2) - D(\Sigma_{g,b,m}^p), \]
and a corresponding recurrence relation:
\begin{thm}\label{thm: nondeep recurrence}
	Let $\Sigma_{g,b,m}^p$ be a connected, triangulable surface of genus $g \geq 0$ with $b>1$ boundary components, $m\geq 2$ marked points, and $p\geq 0$ punctures. 
	Then the number of non-deep points of $\mathcal{A}(\Sigma_{g,b,m}^p; \mathbb{F}_2)$ satisfies the recurrence relation 
	\[ L(\Sigma_{g,b,m}^p) = L(\Sigma_{g,b,m-1}^p) + 2 L(\Sigma_{g,b,m-2}^p) + 2^{2g+b+2p-1}. \]
\end{thm}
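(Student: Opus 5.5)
The plan is to split into the cases $p=0$ and $p\geq 1$ and, in each case, combine two recurrences already established in the paper: one for the total point count and one for the deep points.

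When $p=0$ the statement is exactly Proposition \ref{prop: unpunc recursion}, since the correction term becomes $2^{2g+b+2\cdot 0-1}=2^{2g+b-1}$. Nothing further is needed there.

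Now suppose $p\geq 1$. By definition, $L(\Sigma_{g,b,m}^p) = \#\mathcal{A}(\Sigma_{g,b,m}^p;\mathbb{F}_2) - D(\Sigma_{g,b,m}^p)$, and I will use two facts.

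\begin{itemize}
\item Lemma \ref{lemma: recurrence} gives $\#\mathcal{A}(\Sigma_{g,b,m}^p;\mathbb{F}_2) = \#\mathcal{A}(\Sigma_{g,b,m-1}^p;\mathbb{F}_2) + 2\#\mathcal{A}(\Sigma_{g,b,m-2}^p;\mathbb{F}_2)$.
\item Lemma \ref{lem: punc deep recursion} gives $D(\Sigma_{g,b,m}^p) = D(\Sigma_{g,b,m-1}^p) + 2D(\Sigma_{g,b,m-2}^p) - 2^{2g+b+2p-1}$.
\end{itemize}

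Subtracting the second identity from the first yields
\[ L(\Sigma_{g,b,m}^p) = L(\Sigma_{g,b,m-1}^p) + 2L(\Sigma_{g,b,m-2}^p) + 2^{2g+b+2p-1}, \]
which is the claim. The argument is linear in the two lemmas, so no parity split is needed beyond what those lemmas already contain.

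The main point to check is the hypotheses of the two lemmas. Both must apply at $m$, $m-1$ and $m-2$, so I would state the theorem for those $m$ where $\Sigma_{g,b,m-2}^p$ still satisfies the assumptions of Theorem \ref{thm: punctured count full}, and note that the closed formula extends formally otherwise. This also requires that Lemma \ref{lemma: recurrence} is applied at each puncture level $p-i$, as it is inside the proof of Lemma \ref{lem: punc deep recursion}.

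A further thing to double-check is that the printed proof of Lemma \ref{lem: punc deep recursion} has its parity labels and a subscript ($m$ versus $m-2$) swapped. In writing this proof I would recheck that the identity itself holds in both parities.

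For $m$ odd, $m-1$ is even and $m-2$ is odd. Only the $m-1$ term carries the benevolent contribution $2^{2g+b+2p-1}$, so the right side exceeds the alternating sums by $2^{2g+b+2p-1}$, while $D(\Sigma_{g,b,m}^p)$ carries none. This matches the identity.

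For $m$ even, $m-2$ is even, so the right side carries $2\cdot 2^{2g+b+2p-1}$, while $D(\Sigma_{g,b,m}^p)$ carries one copy. This again leaves a difference of exactly $2^{2g+b+2p-1}$.

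Hence the identity is correct, and the subtraction above completes the proof.
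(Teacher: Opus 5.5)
Your proof is correct and matches the paper's argument exactly: the case $p=0$ is Proposition \ref{prop: unpunc recursion}, and for $p\geq 1$ one subtracts the identity of Lemma \ref{lem: punc deep recursion} from that of Lemma \ref{lemma: recurrence}. One correction to your side remark: the parity labels in the printed proof of Lemma \ref{lem: punc deep recursion} are actually right, and the only slip there is $2D(\Sigma_{g,b,m}^p)$ written for $2D(\Sigma_{g,b,m-2}^p)$. (The paper's proof of this theorem has a similar typo, writing $\#\mathcal{A}(\Sigma_{g,b,m-2}^0;\mathbb{F}_2)$ where the superscript should be $p$.)
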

\begin{proof}
	If $p=0$, this follows from Proposition \ref{prop: unpunc recursion}. If $p \geq 1$, using Lemma \ref{lemma: recurrence} and Lemma \ref{lem: punc deep recursion}, we have 
	\begin{align*}
		L(\Sigma_{g,b,m-1}^p) + 2 L(\Sigma_{g,b,m-2}^p) &= \#\mathcal{A}(\Sigma_{g,b,m-1}^p; \mathbb{F}_2) - D(\Sigma_{g,b,m-1}^p) \\ &\quad\quad + 2\#\mathcal{A}(\Sigma_{g,b,m-2}^0; \mathbb{F}_2) - 2D(\Sigma_{g,b,m-2}^p)\\
		&= \#\mathcal{A}(\Sigma_{g,b,m}^p; \mathbb{F}_2) - \big( D(\Sigma_{g,b,m-1}^p) + 2D(\Sigma_{g,b,m-2}^p) \big) \\
		&= \#\mathcal{A}(\Sigma_{g,b,m}^p; \mathbb{F}_2) - 2^{2g+b+2p-1} - D(\Sigma_{g,b,m}^p) \\ 
		&= L(\Sigma_{g,b,m}^p) - 2^{2g+b+2p-1}.
	\end{align*}
\end{proof}

\section*{Acknowledgements}

The author would like to thank Jos{\'e} Simental for helpful conversations and comments on earlier versions of this note. The author would also like to thank Thomas Kahle for pointing out the \verb|enum_affine_finite_field| function in SageMath, which was used to verify several of the smaller cases addressed in this note. 

\subsection*{Funding}

This work was supported by Universidad Nacional Aut{\'o}noma de M{\'e}xico Postdoctoral Program. 

\subsection*{AI}

No artificial intelligence was used at any stage of the preparation of this note. 

\bibliographystyle{plain}
\bibliography{main}

\end{document}